\documentclass{amsart}
\usepackage[left=2.5cm,    
right=2.5cm,   
top=2cm,       
bottom=2cm]{geometry} 
\usepackage{amssymb}
\usepackage{amsmath}
\usepackage{mathtools}
\usepackage{booktabs}
\usepackage{float}
\usepackage{verbatim}
\usepackage{tikz-cd}
\usepackage{bm} 
\usepackage[colorlinks,linkcolor=cyan,citecolor=cyan]{hyperref}
\usepackage[hang,flushmargin]{footmisc} 
\usepackage[square,comma,sort&compress,numbers]{natbib} 
\usepackage{mathrsfs} 
\usepackage[font=footnotesize,skip=0pt,textfont=rm,labelfont=rm]{caption,subcaption} 

\usepackage{amsthm}
\newtheorem{theorem}{Theorem}[section]
\newtheorem{proposition}{Proposition}[section]
\newtheorem{lemma}{Lemma}[section]
\newtheorem{definition}{Definition}[section]

\newcommand{\ssp}{\mathfrak{sp}(2)_{\mathbb{C}}}
\newcommand{\spp}{\mathfrak{sp}_4(\mathbb{C})}

\begin{document}
	
	\title[Morse Index of Yang--Mills Connections over $S^7$]{Morse Index of a $G_2$-Instanton over $S^7$}
	
	\author[X. Han]{Xiaoli Han}
	\address{Department of Mathematical Sciences, Tsinghua University, Beijing 100084, China}
	\email{hanxiaoli@mail.tsinghua.edu.cn}
	
	\author[Y. Wen]{Yang Wen$^{*}$}
	\address{School of Mathematics and Statistics, Nanjing University of Science and Technology, Nanjing 210094, China}
	\email{wenyang2325@njust.edu.cn}
	\thanks{$^{*}$Corresponding author.}
	
	\date{}
	\begin{abstract}
		A classical result states that every nonflat Yang--Mills connection on the unit round sphere $S^n$, $n\ge5$, has Morse index at least $n+1$. In this paper, we prove that the pullback of the standard anti-self-dual BPST instanton on $S^4$ under the quaternionic Hopf fibration $S^7\to S^4$ attains this lower bound, with Morse index $8$ and nullity $20$. This provides an example distinct from the canonical connection on $SO(8)\to S^7$.
		
		Waldron studied this connection as the standard $G_2$-instanton and showed that its infinitesimal deformation space, for the fixed standard nearly parallel $G_2$-structure, has dimension $15$. We extend this analysis to the full Yang--Mills second variation, determining both the negative eigenspace and the entire kernel of the Jacobi operator in Coulomb gauge. Our result establishes that the connection has the smallest possible Morse index and that its $20$-dimensional Yang--Mills kernel contains Waldron's $15$-dimensional deformation space, together with five additional independent zero modes.
	\end{abstract}
	\subjclass[2020]{Primary 58E15; Secondary 53C07, 58J50}
	\keywords{$G_2$-instanton, Yang--Mills connection, Morse index, nullity}
	\date{}
	\maketitle
	\section{Introduction}
	
	The quaternionic Hopf fibration $\phi:S^7\to S^4$ provides a direct link between four-dimensional instanton theory and gauge theory in dimension seven. The pullback of the standard anti-self-dual BPST instanton \cite{AAAY} is a distinguished example: it is a $G_2$-instanton with respect to the standard nearly parallel $G_2$-structure and hence a Yang--Mills connection with respect to the unit round metric. Waldron \cite{AW} determined its deformations as a $G_2$-instanton. In this paper, we study its second variation as a Yang--Mills connection and determine its Morse index and nullity.
	
	Fix the standard nearly parallel $G_2$-structure $\varphi$ on $S^7$ compatible with the quaternionic Hopf fibration, and write $\psi=*\varphi$, where $*$ is the Hodge star of the unit round metric. Thus $d\varphi=4\psi$. A connection $\nabla$ is a $G_2$-instanton if its curvature satisfies
	\begin{align*}
		F_\nabla\wedge\psi=0.
	\end{align*}
	This first-order equation implies the Yang--Mills equation by the nearly parallel condition and the Bianchi identity. Waldron proved that the Hopf pullback of an anti-self-dual connection on $S^4$ satisfies this equation \cite[Lemma 4.1]{AW}; the pullback of the standard BPST instanton is the connection he calls the \emph{standard $G_2$-instanton}. We work with its adjoint $SO(3)$ realization on $P=\phi^*P_4$. Passing from the $SU(2)$ connection to this realization identifies the adjoint bundles and the induced deformation operators, so their infinitesimal deformation spaces agree.
	
	Waldron's results describe both the infinitesimal deformations and the global moduli component of the standard $G_2$-instanton. For the Hopf pullback of an irreducible anti-self-dual instanton on $S^4$, he identified the infinitesimal $G_2$-instanton deformation space with three copies of the anti-self-dual deformation space on $S^4$ \cite[Theorem~5.11]{AW}. In the charge-one case, this gives real dimension $15$. These deformations are integrable, and the connected component containing the standard instanton is diffeomorphic to the total space of the tautological rank-five vector bundle over the oriented Grassmannian of five-planes in $\mathbb{R}^7$ \cite[Theorem 1.1]{AW}. Its fifteen parameters arise from the five-dimensional family of charge-one instantons on $S^4$ together with ten further directions generated by the $Spin(7)$ action on $S^7$.
	
	The Yang-Mills variational problem allows a larger class of deformations. The $G_2$-instanton equation depends on the fixed form $\varphi$, whereas the Yang--Mills functional depends only on the underlying metric. Consequently, a description of the $G_2$-instanton moduli space does not by itself determine either the unstable directions or all the zero modes of the Yang--Mills Hessian. This leads naturally to the following question.	
	\begin{quote}
		\emph{Question. What are the Morse index and nullity of the standard $G_2$-instanton, regarded as a Yang--Mills connection on the unit round sphere $S^7$?}
	\end{quote}
	
	To specify these quantities, let $E=P\times_{SO(3)}\mathbb{R}^3$ and let $\mathfrak{g}_E=P\times_{Ad}\mathfrak{so}(3)$ be the adjoint bundle. The Yang--Mills functional is
	\begin{align*}
		\operatorname{YM}(\nabla)=\frac12\int_{S^7}|F_\nabla|^2\,dV_g,
	\end{align*}
	where $F_\nabla\in\Omega^2(\mathfrak{g}_E)$. Write $d^\triangledown$ for the induced exterior covariant derivative and $\delta^\triangledown$ for its formal adjoint. Its critical points satisfy
	\begin{align}\label{YM equation}
		\delta^\triangledown F_\nabla=0.
	\end{align}
	At a Yang--Mills connection, infinitesimal variations orthogonal to the gauge orbit satisfy the Coulomb condition $\delta^\triangledown B=0$. On this space, the second variation takes the form
	\begin{align}\label{2nd var}
		\left.\frac{d^2}{dt^2}\right|_{t=0}\operatorname{YM}(\nabla+tB)=\int_{S^7}\left<\mathscr{S}^\nabla(B),B\right>\,dV_g,\qquad B\in\ker\delta^\triangledown,
	\end{align}
	where $\mathscr{S}^\nabla=\Delta^\triangledown+\mathfrak{R}^\triangledown$ is the self-adjoint elliptic gauge-fixed Jacobi operator, $\Delta^\triangledown=\delta^\triangledown d^\triangledown+d^\triangledown\delta^\triangledown$, and $\mathfrak{R}^\triangledown$ is the zero-order curvature term. The Morse index $i(\nabla)$ is the total multiplicity of the negative eigenvalues of $\mathscr{S}^\nabla|_{\ker\delta^\triangledown}$, and the nullity $n(\nabla)$ is the dimension of its kernel; see \cite[Theorem~6.8 and Definition~6.10]{JL}. Throughout, these dimensions are taken over $\mathbb{R}$.
	
	The existence of negative directions follows from the instability theorem of Bourguignon, Lawson, and Simons \cite{JHJ,JL}: every nonflat Yang--Mills connection over the unit round sphere $S^n$, $n\ge5$, is unstable. Nayatani and Urakawa \cite{SH} established the sharp lower bound $i(\nabla)\ge n+1$, and Ni \cite{Ni} proved that the $-(n-4)$-eigenspace has dimension at least $n+1$. Thus, on $S^7$, every nonflat Yang--Mills connection has at least eight negative directions with eigenvalue $-3$. Laquer \cite{HTL} showed that the canonical connection on $SO(n+1)\to S^n$ attains this lower bound and has nullity zero. For the standard $G_2$-instanton, the lower bound leaves open the possibility of further negative directions, while Waldron's deformation theorem already supplies fifteen zero modes. Our main result determines the full nonpositive spectrum.
	
	\begin{theorem}\label{thm1}
		Let $\nabla'$ be the standard anti-self-dual BPST instanton on the adjoint principal $SO(3)$-bundle $P_4\to S^4$, and let $\phi:S^7\to S^4$ be the quaternionic Hopf fibration. With respect to the unit round metric on $S^7$, the pullback connection $\nabla=\phi^*\nabla'$ on $P=\phi^*P_4$ is Yang--Mills and has Morse index and nullity
		\begin{align}
			i(\nabla)=8,\qquad n(\nabla)=20.
		\end{align}
		More precisely, the only negative eigenvalue of $\mathscr{S}^\nabla|_{\ker\delta^\triangledown}$ is $-3$, with multiplicity $8$, and its zero eigenspace has dimension $20$.
	\end{theorem}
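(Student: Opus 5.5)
The plan is to compute the full nonpositive spectrum of $\mathscr{S}^\nabla$ on $\ker\delta^\triangledown$ by harmonic analysis on the homogeneous space $S^7=Sp(2)\times Sp(1)/(Sp(1)\times Sp(1))$. The connection $\nabla$ is invariant under this group. Its adjoint bundle $\mathfrak{g}_E$ and the bundle $T^*S^7\otimes\mathfrak{g}_E$ are homogeneous vector bundles, so the Frobenius reciprocity theorem decomposes $L^2(T^*S^7\otimes\mathfrak{g}_E)$ into isotypic pieces indexed by irreducible representations $V_{\lambda}$ of $G=Sp(2)\times Sp(1)$. Each piece has multiplicity space $\operatorname{Hom}_K(V_\lambda,\mathfrak{m}^*\otimes\mathbb{R}^3)$, where $K=Sp(1)\times Sp(1)$ and $\mathfrak{m}=\mathbb{R}^4\oplus\mathbb{R}^3$ is the horizontal plus vertical splitting.

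First I verify the Yang--Mills property. It follows from Waldron's Lemma 4.1 (the $G_2$-instanton condition) together with $d\psi=0$ and the Bianchi identity. Next I rewrite $\mathscr{S}^\nabla$ using a Weitzenböck formula:
\[
\mathscr{S}^\nabla=(\nabla^*\nabla+\mathrm{Ric})+\mathfrak{R}',
\]
with $\mathrm{Ric}=6$ on the unit $S^7$ and $\mathfrak{R}'$ an algebraic term built from $F_\nabla$. Since $F_\nabla$ is horizontal and anti-self-dual, $\mathfrak{R}'$ is $K$-equivariant and acts on the fibre $\mathfrak{m}^*\otimes\mathfrak{so}(3)$ by explicit constants on each $K$-irreducible summand.

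The rough Laplacian on a homogeneous bundle is not simply a Casimir, because the Levi-Civita connection on $S^7$ differs from the canonical connection of the reductive splitting $\mathfrak{g}=\mathfrak{k}\oplus\mathfrak{m}$. I will therefore express $\nabla^*\nabla$ as
\[
\nabla^*\nabla=\mathrm{Cas}_G-\mathrm{Cas}_K+(\text{first-order correction})+(\text{zero-order torsion term}).
\]
The first-order correction is a $K$-equivariant operator built from the torsion of the canonical connection. Since the metric is not normal, I would treat $S^7=Spin(7)/G_2$ as an alternative, where $\nabla$ is the canonical $G_2$-connection and the torsion is $\varphi$-type. That route gives cleaner formulas, because the canonical connection of a nearly parallel $G_2$-structure has totally skew, parallel torsion. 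I commit to the $Spin(7)/G_2$ model. Then $T^*S^7\otimes\mathfrak{g}_E$ corresponds to the $G_2$-module $\mathbb{R}^7\otimes\mathfrak{so}(3)$, provided $\nabla$ is $Spin(7)$-homogeneous. If it is not (its structure group is $SO(3)$, embedded via $G_2\supset SO(4)$), I revert to the $Sp(2)Sp(1)$ model.

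In either model, $\mathscr{S}^\nabla$ on the $V_\lambda$-isotypic component becomes a finite Hermitian matrix of the form
\[
c(\lambda)\,\mathrm{Id}+(\text{fixed }K\text{-equivariant endomorphism}).
\]
The Casimir eigenvalue $c(\lambda)$ grows quadratically, so only finitely many $\lambda$ can contribute nonpositive eigenvalues. I restrict to those and impose the Coulomb condition. This amounts to removing the image of $d^\triangledown$ on $\Omega^0(\mathfrak{g}_E)$, which $\mathscr{S}^\nabla$ preserves with eigenvalues those of $\Delta^\triangledown\ge0$ on sections. I then diagonalize.

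I then check the answer against known data. The $-3$ eigenspace should be the $8$-dimensional space of Ni/Nayatani--Urakawa, realised in the lowest $\lambda$, namely $\mathbb{R}^8$ for $Spin(7)$ or $\mathbb{H}^2$ for $Sp(2)$. The kernel should contain Waldron's 15-dimensional space, $\Lambda^2\mathbb{R}^7\ominus\mathfrak{g}_2\oplus\mathbb{R}^5$-type pieces, plus one more $5$-dimensional irreducible. I expect that extra piece to be a $\Lambda^2$- or $\mathbb{R}^5$-type family coming from $G_2$-structure variations that preserve the metric.

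The main obstacle is making the zero-order and first-order correction terms explicit and correct on each $K$-type. A sign or normalisation error would shift eigenvalues by integers and change the count, so I will calibrate them by two checks. The eigenvalue $-3$ must appear with multiplicity $\ge8$. The Yang--Mills condition must force $\mathscr{S}^\nabla$ to vanish on the known Waldron deformations.
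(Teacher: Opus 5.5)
Your strategy (homogeneous realization, Frobenius reciprocity/Peter--Weyl, a Casimir estimate isolating finitely many isotypic components, then finite-dimensional diagonalization) is the one the paper follows. As written, however, it does not reach the theorem, for three concrete reasons.

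First, the $Spin(7)/G_2$ model is not available. $G_2$ has no nontrivial homomorphism to $SO(3)$, so by Wang's theorem the only $Spin(7)$-invariant connection on a $Spin(7)$-homogeneous $SO(3)$-bundle over $Spin(7)/G_2$ is the trivial flat one. This is consistent with the paper's introduction: $Spin(7)$ generates ten of the fifteen directions in Waldron's moduli space, so it does not fix $\nabla$. The fallback is therefore forced. You must use $Sp(2)$, as the paper does, or $Sp(2)\times Sp(1)$, which is legitimate because right quaternionic multiplication covers the identity of $S^4$.

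Second, your claim that on each isotypic component the operator becomes $c(\lambda)\mathrm{Id}+(\text{fixed endomorphism})$ contradicts your own formula for $\nabla^*\nabla$, and it is false. Since $[\mathfrak{m},\mathfrak{m}]_{\mathfrak{m}}\neq0$ and the round metric is not even normal, the paper obtains $\tilde J=\tilde J_2+\tilde J_1+\tilde J_0$ with two features your plan misses:
the second-order part is $\tilde J_2=2C_G-C_H-2C_K$, not $C_G-C_K$, and it is not constant on a $V_{a,b}$-component, because $C_H$ depends on the $Sp(1)\times Sp(1)$-type $(p,q)$;
there is a genuine first-order term $\tilde J_1=\sum_{i\le3}X_i\otimes\tilde J_{1i}\otimes I+\sum_{i\ge4}\sqrt2X_i\otimes\tilde J_{1i}\otimes I$, whose size grows like the square root of the Casimir.

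Reducing to a finite list therefore needs an explicit absorption estimate: the paper's Cauchy--Schwarz bound $\tilde J\ge\frac12\tilde J_2+\tilde J_0-12P_1-6P_2$. It also needs two inputs absent from your plan. $K$-equivariance pins the $K$-Casimir label to $q=1,2,3$ according as the $\tilde J_0$-eigenvalue is $-7,12,17$. The branching rule for $V_{a,b}$ restricted to $Sp(1)\times Sp(1)$ then bounds $p$. Only with these does one land on $V_{1,0},V_{0,1},V_{2,0},V_{1,1}$. On the last two, $\tilde J_1$ is not negligible: the lower bounds obtained there are only $8-4\sqrt2$ and $14-\sqrt{89}$.

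Third, the values $8$ and $20$ are presupposed rather than derived. Nayatani--Urakawa and Ni give only $i(\nabla)\ge8$, and Waldron only $n(\nabla)\ge15$. The theorem is the matching upper bound, and it comes from explicit computation:
on the $8$-dimensional $K$-equivariant part of the $V_{1,0}$-component, $\tilde J_2=4$, $\tilde J_0=-7$, $\tilde J_1=0$, so $\tilde J=-3$;
on the $20$-dimensional one for $V_{0,1}$, $\tilde J_2=7$, $\tilde J_0=-7$, $\tilde J_1=0$, so $\tilde J=0$;
and $\tilde J>0$ on the Coulomb parts of the other two.
Your predicted shape of the kernel cannot substitute for this, and it is itself off. $(\Lambda^2\mathbb{R}^7\ominus\mathfrak{g}_2)\oplus\mathbb{R}^5$ has dimension $7+5=12$, not $15$. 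The paper finds the whole $20$-dimensional kernel in the single isotypic component of the $5$-dimensional module $V_{0,1}$, with multiplicity four.

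Two smaller remarks. The Yang--Mills property uses the nearly parallel condition, not $d\psi=0$: from $\ast F=-F\wedge\varphi$ one gets $d^\triangledown\ast F=-F\wedge d\varphi=-4F\wedge\psi=0$. Your treatment of the Coulomb condition, on the other hand, is correct and neater than the paper's direct substitution. At a Yang--Mills connection $\mathscr{S}^\nabla(d^\triangledown u)=d^\triangledown\Delta^\triangledown u$, so $\mathscr{S}^\nabla$ has strictly positive spectrum on $\operatorname{im}d^\triangledown$. Every nonpositive eigenvector therefore lies automatically in $\ker\delta^\triangledown$, which is why $\tilde\delta$ vanishes identically on the $V_{1,0}$ and $V_{0,1}$ pieces.
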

	
	The connection in Theorem~\ref{thm1} is described explicitly in Definition~\ref{instanton}. The theorem shows that the standard $G_2$-instanton attains the smallest possible Morse index among nonflat Yang--Mills connections on $S^7$. Waldron's $15$-dimensional deformation space consists of infinitesimal deformations through $G_2$-instantons for the fixed structure $\varphi$. Since every such instanton is Yang--Mills, all these directions lie in the kernel of the Yang--Mills Jacobi operator in Coulomb gauge. Our computation determines this kernel completely and shows that the inclusion is strict:
	\begin{align*}
		\{B\in\Omega^1(\mathfrak{g}_E)\mid\delta^\triangledown B=0,\ (d^\triangledown B)\wedge\psi=0\}\subsetneq\ker\mathscr{S}^\nabla\cap\ker\delta^\triangledown.
	\end{align*}
	The space on the left has real dimension $15$ by Waldron's theorem, whereas the full kernel on the right has real dimension $20$ by Theorem \ref{thm1}. Thus, Waldron's deformation space is supplemented by five additional independent Yang--Mills zero modes. These additional directions do not satisfy the linearized $G_2$-instanton equation and therefore do not arise from deformations through $G_2$-instantons for the same fixed structure $\varphi$. Both spaces are considered in Coulomb gauge, so the difference is not due to infinitesimal gauge transformations. The distinction is also reflected in their symmetries: the fixed $G_2$-structure is preserved by $Spin(7)$, whereas the Yang-Mills functional is invariant under the full group $SO(8)$ of orientation-preserving isometries of the round sphere.
	
	The proof uses the homogeneous description $S^7=Sp(2)/Sp(1)$ and the $Sp(2)$-invariance of the connection. We realize adjoint-bundle-valued one-forms as equivariant functions on $Sp(2)$ and express the Jacobi operator in this realization. The Peter-Weyl theorem then reduces the spectral problem to finite-dimensional spaces associated with irreducible representations of $Sp(2)$. Casimir estimates exclude all but four representations from contributing to the nonpositive spectrum, and explicit calculations on the remaining spaces determine the index and nullity. Section 2 fixes the geometric conventions, Section 3 derives the equivariant formula for the Jacobi operator, and Section 4 carries out the estimates and the finite-dimensional calculations.
	\section{Preliminaries}\label{section preliminaries}
	\subsection{The adjoint BPST bundle and its Hopf pullback}
	
	In this section, we introduce the principal $SO(3)$-bundle $P_4$ underlying the adjoint BPST instanton \cite{AAAY} and its pullback $P=\phi^*P_4$ under the quaternionic Hopf fibration. We then identify the adjoint bundle of $P$ with a homogeneous vector bundle. To this end, we first describe the quaternionic Hopf fibration in terms of homogeneous spaces.
	
	Let $\mathbb{H}$ denote the algebra of quaternions. For $A\in\mathbb{H}^{n\times n}$, let $A^\dagger$ denote its quaternionic conjugate transpose. The compact symplectic group and its Lie algebra are defined by
	\begin{align*}
		Sp(n)&=\{A\in\mathbb{H}^{n\times n}\mid A^\dagger A=I_n\},\\
		\mathfrak{sp}(n)&=\{A\in\mathbb{H}^{n\times n}\mid A+A^\dagger=0\}.
	\end{align*}
	We identify
	\begin{align*}
		S^7=\{(q_1,q_2)\in\mathbb{H}^2\mid |q_1|^2+|q_2|^2=1\}
	\end{align*}
	and
	\begin{align*}
		S^4=\mathbb{H}P^1=\{[q_1:q_2]\mid (q_1,q_2)\in\mathbb{H}^2\setminus\{(0,0)\}\},
	\end{align*}
	where $[q_1:q_2]$ denotes the equivalence class of $(q_1,q_2)$ under right multiplication by nonzero quaternions. The quaternionic Hopf fibration $\phi:S^7\to S^4$ is given by the quotient map
	\begin{align*}
		\phi(q_1,q_2)=[q_1:q_2].
	\end{align*}
	See \cite[Section~4.1]{AW} for this description.
	
	In this paper, we always let $G=Sp(2)$ and let
	\begin{align*}
		K=\{\operatorname{diag}(1,q)\mid q\in Sp(1)\},\qquad H=\{\operatorname{diag}(q_1,q_2)\mid q_1,q_2\in Sp(1)\}
	\end{align*}
	be subgroups of $G$ with Lie algebras $\mathfrak{k}$ and $\mathfrak{h}$, respectively. 
	
	\begin{lemma}
		There are natural diffeomorphisms
		\begin{align}
			S^7\cong G/K,\qquad S^4\cong G/H,
		\end{align}
		under which the Hopf fibration is given by
		\begin{align}
			\phi(gK)=gH.
		\end{align}
	\end{lemma}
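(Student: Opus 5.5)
We prove the lemma by exhibiting transitive actions of $G=Sp(2)$ on $S^7$ and on $S^4=\mathbb{H}P^1$, computing the isotropy groups, and invoking the standard fact that a smooth transitive action of a compact Lie group on a manifold $M$ gives a diffeomorphism $G/G_x\cong M$, $gG_x\mapsto g\cdot x$.

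For $S^7$, let $G$ act on $\mathbb{H}^2$ (column vectors) by left matrix multiplication. Since $A^\dagger A=I_2$, this action preserves the quaternionic Hermitian norm $|q_1|^2+|q_2|^2$, so it preserves $S^7$. It is transitive: any unit vector $v=(q_1,q_2)^T$ extends to a quaternionic orthonormal basis $\{v,w\}$ of $\mathbb{H}^2$ (right $\mathbb{H}$-module with the standard form), for instance with $w=(-\bar q_2\,\cdot,\ \bar q_1\,\cdot)$ suitably normalized, say $w=(-\overline{q_2}^{\,\prime},\dots)$. Concretely, the matrix
\[
\begin{pmatrix} q_1 & -\bar q_2 \\ q_2 & \bar q_1\end{pmatrix}
\]
need not lie in $Sp(2)$ because quaternions do not commute, so instead I would use Gram--Schmidt over $\mathbb{H}$: pick any vector not in $v\mathbb{H}$, subtract its projection $v\langle v,\cdot\rangle$, and normalize. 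The matrix with columns $v,w$ then lies in $Sp(2)$ and sends $e_1=(1,0)^T$ to $v$.

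The stabilizer of $e_1$ consists of those $A$ with first column $e_1$. Orthonormality of the columns then forces the $(1,2)$ entry to vanish and the $(2,2)$ entry to have unit norm. Hence the stabilizer is exactly $K=\{\operatorname{diag}(1,q)\}$, and this gives $S^7\cong G/K$ via $gK\mapsto g e_1$.

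For $S^4$, the left action of $G$ commutes with right scalar multiplication. It therefore descends to a smooth action on $\mathbb{H}P^1$, and this action is transitive because the action on $S^7$ is transitive. The stabilizer of the point $[1:0]$ consists of those $A$ with $Ae_1=e_1\lambda$. Such an $A$ has first column $(\lambda,0)^T$ with $|\lambda|=1$, and orthonormality forces the second column to be $(0,\mu)^T$ with $|\mu|=1$. So the stabilizer is $H$, and $S^4\cong G/H$ via $gH\mapsto[g e_1]$.

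Finally, under these identifications $\phi(gK)=\phi(ge_1)=[ge_1]$, and this is the image of $gH$. The map $gK\mapsto gH$ is well defined since $K\subset H$. The only step needing care is transitivity, which amounts to the existence of the quaternionic Gram--Schmidt completion above. This is routine once one fixes that scalars act on the right and uses the form $\langle v,w\rangle=v^\dagger w$.
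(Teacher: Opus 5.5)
Your proposal is correct and follows the same route as the paper: transitive left action of $Sp(2)$ on $S^7$ and on $\mathbb{H}P^1$, isotropy groups $K$ and $H$ at $(1,0)^T$ and $[1:0]$, and compatibility $\phi(gx_0)=[gx_0]$ using $K\subset H$. You also supply the transitivity argument (quaternionic Gram--Schmidt with $\langle v,w\rangle=v^\dagger w$) and the stabilizer computations that the paper leaves implicit; the garbled aside attempting an explicit formula for $w$ should simply be deleted, since the Gram--Schmidt step already suffices.
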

	\begin{proof}
		Let $x_0=(1,0)^T\in S^7$. The group $Sp(2)$ acts transitively on $S^7$ by $g\cdot x=gx$, and the isotropy subgroup of $x_0$ is $K$. Hence
		\begin{align*}
			G/K\to S^7,\qquad gK\mapsto gx_0
		\end{align*}
		is a diffeomorphism. Similarly, $Sp(2)$ acts transitively on $S^4=\mathbb{H}P^1$ by $g\cdot[x]=[gx]$, and the isotropy subgroup of $[x_0]$ is $H$. Hence
		\begin{align*}
			G/H\to S^4,\qquad gH\mapsto[gx_0]
		\end{align*}
		is a diffeomorphism. Since $K\subset H$, the map $gK\mapsto gH$ is well defined. Moreover,
		\begin{align*}
			\phi(gx_0)=[gx_0],
		\end{align*}
		so this map agrees with the quaternionic Hopf fibration under the above diffeomorphisms.
	\end{proof}
	
	Identify $\mathbb{R}^3$ with the imaginary quaternions
	\begin{align*}
		\operatorname{Im}\mathbb{H}=\{ai+bj+ck\mid a,b,c\in\mathbb{R}\}.
	\end{align*}
	Quaternionic conjugation defines a group homomorphism
	\begin{align*}
		\lambda:Sp(1)\to SO(3),\qquad \lambda(q)x=qxq^{-1}.
	\end{align*}
	Its differential at the identity,
	\begin{align*}
		\lambda_*:\mathfrak{sp}(1)\to\mathfrak{so}(3),
	\end{align*}
	is a Lie algebra isomorphism.
	
	\begin{definition}\ \\
		\begin{enumerate}
			\item Define
			\begin{align*}
				P_4=G\times_H SO(3),
			\end{align*}
			where the equivalence relation is $[g\operatorname{diag}(q_1,q_2),h]=[g,\lambda(q_2)h]$. The projection and the right $SO(3)$-action are given by
			\begin{align*}
				\pi_4:P_4\to G/H,\qquad \pi_4([g,h])=gH,\qquad[g,h_1]\cdot h_2=[g,h_1h_2],
			\end{align*}
			respectively. Thus $P_4$ is a principal $SO(3)$-bundle over $G/H$. This is the adjoint principal bundle underlying the BPST instanton; see \cite[Section~4.2]{AW}.
			
			\item The pullback bundle $P=\phi^*P_4$ is naturally isomorphic to
			\begin{align}
				G\times_K SO(3),
			\end{align}
			where $[g\operatorname{diag}(1,q),h]=[g,\lambda(q)h]$. The isomorphism is given by
			\begin{align*}
				G\times_K SO(3)\to\phi^*P_4,\quad [g,h]\mapsto(gK,[g,h]).
			\end{align*}
			Under this identification, the projection and the right $SO(3)$-action are given by
			\begin{align*}
				\pi:P\to G/K,\qquad \pi([g,h])=gK,\qquad[g,h_1]\cdot h_2=[g,h_1h_2],
			\end{align*}
			respectively. See \cite[Section~4.3]{AW} for the pullback construction.
			
			\item Define the adjoint bundle of $P$ by
			\begin{align*}
				E'=P\times_{\operatorname{Ad}}\mathfrak{so}(3),
			\end{align*}
			where $[p\cdot h,X]=[p,\operatorname{Ad}(h)X]$ for $p\in P$, $h\in SO(3)$, and $X\in\mathfrak{so}(3)$.
		\end{enumerate}
	\end{definition}
	
	Both the unit round metric on $S^7$ and the pullback BPST connection are $G$-invariant. It is therefore convenient to identify $E'$ with a $G$-homogeneous vector bundle.
	
	\begin{lemma}
		Let
		\begin{align}
			E=G\times_K\mathfrak{k},
		\end{align}
		with $[g\operatorname{diag}(1,q),Y]=[g,\operatorname{Ad}(q)Y]$ for $g\in G$, $q\in Sp(1)$, and $Y\in\mathfrak{k}$, where we use $\mathfrak{sp}(1)\cong\mathfrak{k}$. Then there is a $G$-equivariant vector-bundle isomorphism $E'\cong E$ over $G/K$.
	\end{lemma}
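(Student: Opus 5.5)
We identify the fibres explicitly and check that the identification is well defined and equivariant. Recall $P=G\times_K SO(3)$ with $[g\operatorname{diag}(1,q),h]=[g,\lambda(q)h]$. Then
\begin{align*}
E'=P\times_{\operatorname{Ad}}\mathfrak{so}(3),\qquad [[g,h],X]=[[g,1],\operatorname{Ad}(h)X],
\end{align*}
so every element of $E'$ has a representative of the form $[[g,1],X]$. Let $\iota:\mathfrak{sp}(1)\to\mathfrak{k}$, $Y\mapsto\operatorname{diag}(0,Y)$, be the isomorphism used in the statement. We propose the map
\begin{align*}
\Phi:E\to E',\qquad \Phi([g,\iota(Y)])=[[g,1],\lambda_*(Y)].
\end{align*}

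The key step is checking that $\Phi$ is well defined, which amounts to the compatibility $\lambda_*\circ\operatorname{Ad}(q)=\operatorname{Ad}(\lambda(q))\circ\lambda_*$. This holds because $\lambda$ is a group homomorphism. Indeed, take $[g\operatorname{diag}(1,q),\iota(Y)]=[g,\iota(\operatorname{Ad}(q)Y)]$. On the $E'$ side,
\begin{align*}
[[g\operatorname{diag}(1,q),1],\lambda_*Y]=[[g,\lambda(q)],\lambda_*Y]=[[g,1],\operatorname{Ad}(\lambda(q))\lambda_*Y]=[[g,1],\lambda_*(\operatorname{Ad}(q)Y)],
\end{align*}
which agrees with the image of $[g,\iota(\operatorname{Ad}(q)Y)]$. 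One must also check that $\Phi$ is independent of the choice of representative on the $E'$ side; this is automatic, since $\Phi$ is defined from $E$ and each representative there produces a specific class.

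It remains to show that $\Phi$ is an isomorphism of $G$-equivariant vector bundles.
\begin{itemize}
\item Fibres: $\Phi$ covers the identity on $G/K$, since both classes project to $gK$.
\item Linearity and bijectivity: on each fibre $\Phi$ is linear and bijective, because $\lambda_*$ is a Lie algebra isomorphism. Surjectivity uses the normal form $[[g,1],X]$ established above.
\item Equivariance: with $G$ acting by left multiplication on the first factor in both bundles, $\Phi(g'\cdot[g,Y])=g'\cdot\Phi([g,Y])$ holds trivially.
\item Smoothness: $\Phi$ is induced from the smooth map $G\times\mathfrak{k}\to G\times SO(3)\times\mathfrak{so}(3)$, $(g,\iota(Y))\mapsto(g,1,\lambda_*Y)$, composed with the quotient maps, so it is smooth. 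Its inverse is given locally by the analogous formula $[[g,h],X]\mapsto[g,\iota(\lambda_*^{-1}\operatorname{Ad}(h)X)]$, which is well defined by the same computation.
\end{itemize}

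The only real obstacle is bookkeeping of the conventions: the left/right placement of $\lambda(q)$ in the equivalence relation for $P$, and the direction of $\operatorname{Ad}$ in $E'$. The choice that makes the compatibility identity come out correctly is $[p\cdot h,X]=[p,\operatorname{Ad}(h)X]$, as in the definition. With this convention the computation above goes through without sign or inverse issues; if the conventions were reversed, one would use $\lambda_*\circ\operatorname{Ad}(q^{-1})$ instead.
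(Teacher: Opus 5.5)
Your proof is correct and follows the paper's: your map $E\to E'$, $[g,\iota(Y)]\mapsto[[g,1],\lambda_*(Y)]$, is exactly the paper's $\Psi$, your inverse formula is the paper's $\Phi$, and well-definedness in both rests on the identity $\lambda_*\circ\operatorname{Ad}(q)=\operatorname{Ad}(\lambda(q))\circ\lambda_*$. The only difference is emphasis: the paper checks well-definedness of the $E'\to E$ direction explicitly (against both equivalence relations) and verifies both compositions are the identity, while you check the $E\to E'$ direction explicitly and obtain bijectivity fibrewise.
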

	\begin{proof}
		Since $\lambda$ is a group homomorphism, its differential satisfies
		\begin{align}
			\lambda_*\circ\operatorname{Ad}(q)=\operatorname{Ad}(\lambda(q))\circ\lambda_*
		\end{align}
		for every $q\in Sp(1)$. Define
		\begin{align*}
			\Phi:E'\to E,\qquad\Phi([[g,h],X])=[g,\lambda_*^{-1}(\operatorname{Ad}(h)X)].
		\end{align*}
		For $h_1,h_2\in SO(3)$, we have
		\begin{align*}
			\Phi([[g,h_1h_2],X])=[g,\lambda_*^{-1}(\operatorname{Ad}(h_1)\operatorname{Ad}(h_2)X)]=\Phi([[g,h_1],\operatorname{Ad}(h_2)X]).
		\end{align*}
		Moreover, for $q\in Sp(1)$, the equivariance of $\lambda_*$ gives
		\begin{align*}
			\Phi([[g\operatorname{diag}(1,q),h],X])=[g,\operatorname{Ad}(q)\lambda_*^{-1}(\operatorname{Ad}(h)X)]=[g,\lambda_*^{-1}(\operatorname{Ad}(\lambda(q)h)X)]=\Phi([[g,\lambda(q)h],X]).
		\end{align*}
		Thus $\Phi$ is well defined.
		
		Define
		\begin{align*}
			\Psi:E\to E',\qquad\Psi([g,Y])=[[g,I_3],\lambda_*(Y)].
		\end{align*}
		The same equivariance identity shows that $\Psi$ is well defined. Moreover,
		\begin{align*}
			\Phi\circ\Psi=\operatorname{id}_E,\qquad \Psi\circ\Phi=\operatorname{id}_{E'}.
		\end{align*}
		Therefore, $\Phi$ is a $G$-equivariant vector-bundle isomorphism.
	\end{proof}
	The following lemma is easily checked.
	\begin{lemma}\label{lem: isomorphic g_E}
		The adjoint bundle $\mathfrak{g}_E$ is naturally isomorphic to
		\begin{align}
			\mathfrak{g}_E=P\times_{\operatorname{Ad}}\mathfrak{so}(3)=E'\cong G\times_K\mathfrak{k},
		\end{align}
		where $K$ acts on $\mathfrak{k}$ by the adjoint representation. Under this identification, for any $[g,A]\in\mathfrak{g}_E$ and $[g,X]\in E$, we have
		\begin{align*}
			[g,A]\cdot[g,X]=[g,\operatorname{ad}(A)X].
		\end{align*}
		Moreover, $\operatorname{ad}:\mathfrak{k}\to\mathfrak{so}(\mathfrak{k})$ is a Lie algebra isomorphism, and
		\begin{align*}
			\operatorname{ad}([A,B])=[\operatorname{ad}(A),\operatorname{ad}(B)]
		\end{align*}
		for any $A,B\in\mathfrak{k}$.
	\end{lemma}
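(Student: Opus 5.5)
The statement to prove is Lemma~\ref{lem: isomorphic g_E}. It has three parts: the identification $\mathfrak{g}_E\cong G\times_K\mathfrak{k}$, the formula for the fibrewise action of $\mathfrak{g}_E$ on $E$, and the fact that $\operatorname{ad}:\mathfrak{k}\to\mathfrak{so}(\mathfrak{k})$ is a Lie algebra isomorphism. I would take them in that order.

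\emph{The identification.} The first isomorphism $\mathfrak{g}_E=P\times_{\operatorname{Ad}}\mathfrak{so}(3)=E'$ holds by definition, since $E'$ was defined as exactly this associated bundle. The second isomorphism $E'\cong G\times_K\mathfrak{k}$ is the map $\Phi$ of the previous lemma, where $K$ acts on $\mathfrak{k}\cong\mathfrak{sp}(1)$ through $\operatorname{Ad}$. So nothing new is needed here beyond quoting that lemma.

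\emph{The action formula.} The action of $\mathfrak{g}_E$ on $E=P\times_{SO(3)}\mathbb{R}^3$ is the fibrewise one, $[p,\xi]\cdot[p,v]=[p,\xi v]$ for $\xi\in\mathfrak{so}(3)$ and $v\in\mathbb{R}^3$. I would first identify $E=P\times_{SO(3)}\mathbb{R}^3$ with $G\times_K\mathfrak{k}$, using $\mathbb{R}^3=\operatorname{Im}\mathbb{H}\cong\mathfrak{sp}(1)\cong\mathfrak{k}$ and the same $\Phi$-type map $[[g,h],v]\mapsto[g,\lambda(h)^{-1}\cdots]$; concretely, through $\Psi$-type representatives $[[g,I_3],x]$. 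With both bundles represented over the same point $[g,I_3]$, an element $[g,A]$ of $\mathfrak{g}_E$ corresponds to $\lambda_*(A)\in\mathfrak{so}(3)$ acting on $x\in\operatorname{Im}\mathbb{H}$. The key computation is
\[
\lambda_*(A)x=\tfrac{d}{dt}\big|_{t=0}e^{tA}xe^{-tA}=Ax-xA=[A,x],
\]
so the action is $\operatorname{ad}(A)X$. Well-definedness under change of representative $g\mapsto g\operatorname{diag}(1,q)$ follows from the identity $\operatorname{Ad}(q)[A,X]=[\operatorname{Ad}(q)A,\operatorname{Ad}(q)X]$.

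\emph{The Lie algebra isomorphism.} The identity $\operatorname{ad}([A,B])=[\operatorname{ad}A,\operatorname{ad}B]$ is the Jacobi identity. For injectivity, $\mathfrak{sp}(1)$ has trivial center, so $\ker\operatorname{ad}=0$. Surjectivity then follows from the dimension count $\dim\mathfrak{k}=3=\dim\mathfrak{so}(\mathfrak{k})$. One also needs $\operatorname{ad}(A)$ to be skew, which holds because the metric on $\mathfrak{k}$ is $\operatorname{Ad}$-invariant. Equivalently, $\operatorname{ad}=\lambda_*$ under $\mathfrak{k}\cong\operatorname{Im}\mathbb{H}$, and $\lambda_*$ is already known to be an isomorphism.

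The only genuine obstacle is bookkeeping: choosing compatible identifications of the vector bundle $E$ and the adjoint bundle so that the action really becomes $\operatorname{ad}$ and not some conjugate of it. The check that $\lambda_*=\operatorname{ad}$ on $\operatorname{Im}\mathbb{H}$ settles this.
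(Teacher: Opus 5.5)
Your argument is correct and is essentially what the paper means by ``easily checked''. The paper gives no written proof: it relies on the map $\Phi$ from the preceding lemma for $\mathfrak{g}_E\cong G\times_K\mathfrak{k}$, on the computation $\lambda_*(A)x=Ax-xA=[A,x]$ on $\operatorname{Im}\mathbb{H}\cong\mathfrak{k}$, and on the Jacobi identity, the trivial centre of $\mathfrak{sp}(1)$ and $\dim\mathfrak{k}=3=\dim\mathfrak{so}(\mathfrak{k})$, exactly as you do.

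One small slip: $[g,\lambda(h)^{-1}\cdots]$ is meaningless, since $h\in SO(3)$ while $\lambda$ is defined on $Sp(1)$. The forward map should be $[[g,h],v]\mapsto[g,hv]$, reading $hv\in\mathbb{R}^3=\operatorname{Im}\mathbb{H}\cong\mathfrak{k}$. Its well-definedness is just $\lambda(q)x=qxq^{-1}=\operatorname{Ad}(q)x$, and its inverse is the map $[g,x]\mapsto[[g,I_3],x]$ that you actually use.
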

	\subsection{Connections and curvature on vector bundles}
	
	In this section, we recall the formulas for connections and curvature that will be used below. Let $(M,g)$ be an $n$-dimensional compact Riemannian manifold, and let $D$ denote its Levi--Civita connection. Let $P\to M$ be a principal $G$-bundle, where $G\subset SO(r)$ is compact, and let $E=P\times_G\mathbb{R}^r$ be the associated vector bundle of rank $r$. Denote the Lie algebra of $G$ by $\mathfrak{g}$ and the adjoint bundle of $P$ by $\mathfrak{g}_E=P\times_{\operatorname{Ad}}\mathfrak{g}$. The bundle metric on $E$ induces the pointwise inner product
	\begin{align*}
		\langle\phi,\psi\rangle={\rm Tr\,}(\phi^T\psi)
	\end{align*}
	on $\Omega^0(\mathfrak{g}_E)$. Its $\operatorname{Ad}(G)$-invariance implies that
	\begin{align*}
		\langle[\phi,\psi],\rho\rangle=\langle\phi,[\psi,\rho]\rangle
	\end{align*}
	for any $\phi,\psi,\rho\in\Omega^0(\mathfrak{g}_E)$. The induced pointwise inner product on $\Omega^p(\mathfrak{g}_E)$ is given by
	\begin{align*}
		\langle\phi,\psi\rangle=\frac{1}{p!}\sum_{1\leq i_1,\ldots,i_p\leq n}\langle\phi(e_{i_1},\ldots,e_{i_p}),\psi(e_{i_1},\ldots,e_{i_p})\rangle,
	\end{align*}
	where $\{e_i\}_{i=1}^n$ is a local orthonormal frame of $TM$.
	
	Let $\nabla$ be a metric connection on $E$ induced by a principal connection on $P$, and use the same symbol for the induced connections on $\mathfrak{g}_E$ and $\Lambda^pT^*M\otimes\mathfrak{g}_E$. Thus, for $\phi\in\Omega^p(\mathfrak{g}_E)$,
	\begin{align*}
		(\nabla_X\phi)(X_1,\ldots,X_p)=\nabla_X(\phi(X_1,\ldots,X_p))-\sum_{k=1}^p\phi(X_1,\ldots,D_XX_k,\ldots,X_p).
	\end{align*}
	This connection induces the exterior covariant derivative
	\begin{align*}
		d^\triangledown:\Omega^p(\mathfrak{g}_E)\to\Omega^{p+1}(\mathfrak{g}_E)
	\end{align*}
	and its formal adjoint
	\begin{align*}
		\delta^\triangledown:\Omega^p(\mathfrak{g}_E)\to\Omega^{p-1}(\mathfrak{g}_E),
	\end{align*}
	which are given by
	\begin{align*}
		d^\triangledown\phi(X_1,\ldots,X_{p+1})&=\sum_{i=1}^{p+1}(-1)^{i+1}\nabla_{X_i}\phi(X_1,\ldots,\widehat{X_i},\ldots,X_{p+1}),\\
		\delta^\triangledown\phi(X_1,\ldots,X_{p-1})&=-\sum_{i=1}^n\nabla_{e_i}\phi(e_i,X_1,\ldots,X_{p-1}).
	\end{align*}
	
	In a local orthonormal trivialization of $E$, the connection has the form
	\begin{align*}
		\nabla=d+A,
	\end{align*}
	where $A\in\Omega^1(U,\mathfrak{g})$. We write $F_A=F_\nabla$ for its curvature. Then
	\begin{align*}
		F_A=dA+\frac12[A\wedge A],
	\end{align*}
	where
	\begin{align*}
		[\phi\wedge\psi](X,Y)=[\phi(X),\psi(Y)]-[\phi(Y),\psi(X)]
	\end{align*}
	for any $\phi,\psi\in\Omega^1(\mathfrak{g}_E)$ and $X,Y\in\Gamma(TM)$.
	
	We use the curvature convention
	\begin{align*}
		R_M(X,Y)Z=D_XD_YZ-D_YD_XZ-D_{[X,Y]}Z
	\end{align*}
	and define the Ricci transformation by
	\begin{align*}
		Ric(X)=\sum_{j=1}^nR_M(X,e_j)e_j.
	\end{align*}
	The covariant Hodge Laplacian and the rough Laplacian are defined by
	\begin{align*}
		\Delta^\triangledown&=d^\triangledown\delta^\triangledown+\delta^\triangledown d^\triangledown,\\
		\nabla^\ast\nabla&=-\sum_{j=1}^n\left(\nabla_{e_j}\nabla_{e_j}-\nabla_{D_{e_j}e_j}\right).
	\end{align*}
	For $\psi\in\Omega^1(\mathfrak{g}_E)$, define the zero-order curvature term as in \cite[(3.1)]{JL} by
	\begin{align*}
		\mathfrak{R}^\triangledown(\psi)(X)=\sum_{j=1}^n[F_A(e_j,X),\psi(e_j)].
	\end{align*}
	The Bochner--Weitzenb\"ock formula \cite[Theorem~(3.2)]{JL} gives
	\begin{align}\label{Bochner}
		\Delta^\triangledown\psi=\nabla^\ast\nabla\psi+\mathfrak{R}^\triangledown(\psi)+\psi\circ Ric,
	\end{align}
	where
	\begin{align*}
		(\psi\circ Ric)(X)=\psi(Ric(X)).
	\end{align*}
	Therefore, for $\psi\in{\rm ker\,}(\delta^\triangledown)$, the Jacobi operator defined in the Introduction can be written as
	\begin{align*}
		\mathscr{S}^\nabla(\psi)=\nabla^\ast\nabla\psi+2\mathfrak{R}^\triangledown(\psi)+\psi\circ Ric.
	\end{align*}
	\subsection{The geometry of the unit sphere $S^7\cong G/K$} In this section, we describe the unit round metric under the identification $S^7\cong G/K$. This metric is $G$-invariant, but it differs from the normal homogeneous metric induced by the $\operatorname{Ad}(G)$-invariant inner product on $\mathfrak{g}$ introduced below.
	
	Let $\langle\ ,\ \rangle_{alg}$ be the $Ad(G)$-invariant inner product on $\mathfrak{g}$ defined by
	\begin{align*}
		\langle X,Y\rangle_{alg}=Re({\rm Tr\,}(X^\dagger Y))
	\end{align*}
	for any $X,Y\in\mathfrak{g}$. The metric satisfies
	\begin{align*}
		\langle[X,Y],Z\rangle_{alg}=\langle X,[Y,Z]\rangle_{alg}
	\end{align*}
	for any $X,Y,Z\in\mathfrak{g}$. Let $\pi_K:G\to G/K$ be the quotient map and consider the orthogonal decomposition with respect to $\langle\ ,\ \rangle_{alg}$
	\begin{align}
		\mathfrak{g}=\mathfrak{k}\oplus\mathfrak{m}.
	\end{align}
	Then $\pi_{K*}\mathfrak{m}=T_KG/K$ is the tangent space at $K\in G/K$. Let $\{X_i\mid1\le i\le7\}$ and $\{X_\alpha\mid8\le\alpha\le10\}$ be orthonormal bases of $\mathfrak{m}$ and $\mathfrak{k}$ respectively, with respect to $\langle\ ,\ \rangle_{alg}$, where
	\begin{equation}\label{X_i}
		\begin{aligned}
			X_1 &= \begin{pmatrix} i & 0 \\ 0 & 0 \end{pmatrix}, \quad &
			X_2 &= \begin{pmatrix} j & 0 \\ 0 & 0 \end{pmatrix}, \quad &
			X_3 &= \begin{pmatrix} k & 0 \\ 0 & 0 \end{pmatrix}, \\
			X_4 &= \frac1{\sqrt2}\begin{pmatrix} 0 & 1 \\ -1 & 0 \end{pmatrix}, \quad &
			X_5 &= \frac1{\sqrt2}\begin{pmatrix} 0 & i \\ i & 0 \end{pmatrix}, \quad &
			X_6 &= \frac1{\sqrt2}\begin{pmatrix} 0 & j \\ j & 0 \end{pmatrix}, \quad &
			X_7 &= \frac1{\sqrt2}\begin{pmatrix} 0 & k \\ k & 0 \end{pmatrix}, \\
			X_8 &= \begin{pmatrix} 0 & 0 \\ 0 & i \end{pmatrix}, \quad &
			X_9 &= \begin{pmatrix} 0 & 0 \\ 0 & j \end{pmatrix}, \quad &
			X_{10} &= \begin{pmatrix} 0 & 0 \\ 0 & k \end{pmatrix}.
		\end{aligned}
	\end{equation}
	Let $\langle\ ,\ \rangle$ denote the unit round metric on $G/K$. Then
	\begin{align*}
		\{\pi_{K*}X_i,\sqrt2\pi_{K*}X_j|1\le i\le3,4\le j\le7\}
	\end{align*}
	is an orthonormal basis of $T_{K}G/K$. Define the orthogonal decomposition
	\begin{align}
		\mathfrak{m}=\mathfrak{m}_1\oplus\mathfrak{m}_2,
	\end{align}
	where $\mathfrak{m}_1={\rm span}_{\mathbb{R}}\{X_1,X_2,X_3\}$ and $\mathfrak{m}_2={\rm span}_{\mathbb{R}}\{X_4,X_5,X_6,X_7\}$. We use the same notation $\langle\ ,\ \rangle$ for the corresponding inner product on $\mathfrak m$ such that
	\begin{align*}
		\langle X,Y\rangle=\langle\pi_{K*}X,\pi_{K*}Y\rangle
	\end{align*}
	for any $X,Y\in\mathfrak{m}$. Since $\mathfrak{m}_1$ and $\mathfrak{m}_2$ are $\operatorname{Ad}(K)$-invariant and $\langle\ ,\ \rangle_{alg}$ is $\operatorname{Ad}(G)$-invariant, the identity
	\begin{align*}
		\langle X,Y\rangle_{alg}=\langle X^{(1)},Y^{(1)}\rangle+2\langle X^{(2)},Y^{(2)}\rangle
	\end{align*}
	implies that $\langle\ ,\ \rangle$ is $\operatorname{Ad}(K)$-invariant, that is, for any $k\in K$ and $X,Y\in\mathfrak{m}$, we have
	\begin{align}
		\langle Ad(k)X,Ad(k)Y\rangle=\langle X,Y\rangle.
	\end{align}
	Using $[\mathfrak{m}_1,\mathfrak{m}_1]\subset\mathfrak{m}_1$, $[\mathfrak{m}_2,\mathfrak{m}_2]\subset\mathfrak{m}_1\oplus\mathfrak{k}$ and $[\mathfrak{m}_1,\mathfrak{m}_2]\subset\mathfrak{m}_2\oplus\mathfrak{k}$, we have
	\begin{equation}\label{eq:bracket-identity}
		\langle[X,Y]_{\mathfrak{m}},Z\rangle=\left\{
		\begin{array}{ll}
			\langle X,[Y,Z]_{\mathfrak{m}}\rangle,&X,Y,Z\in\mathfrak{m}_1\ or\ X,Z\in\mathfrak{m}_2,\ Y\in\mathfrak{m}_1,\\
			\frac12\langle X,[Y,Z]_{\mathfrak{m}}\rangle,&X\in\mathfrak{m}_1,\ Y,Z\in\mathfrak{m}_2,\\\
			2\langle X,[Y,Z]_{\mathfrak{m}}\rangle,&X,Y\in\mathfrak{m}_2,\ Z\in\mathfrak{m}_1,\\
			0,&\text{otherwise}.
		\end{array}
		\right.
	\end{equation}
	for any $X,Y,Z\in\mathfrak{m}_1\cup\mathfrak{m}_2$, where the subscript $\mathfrak m$ denotes the projection onto $\mathfrak m$.
	
	\section{The Jacobi operator}
	\subsection{A $G$-equivariant realization of $\Omega^1(\mathfrak{g}_E)$}
	
	Directly computing the eigenvalues of the Jacobi operator on $\Omega^1(\mathfrak{g}_E)$ is difficult. To overcome this difficulty, in this section we construct a $G$-equivariant isomorphism between $\Omega^1(\mathfrak{g}_E)$ and the $K$-invariant subspace $(C^\infty(G)\otimes\mathfrak{m}^*\otimes\mathfrak{k})_K$. Under this identification, the differential terms in the Jacobi operator are expressed in terms of invariant differential operators acting on smooth functions on $G$. The Peter-Weyl theorem allows us to organize smooth functions on $G$ by the matrix-coefficient spaces of irreducible representations. The Jacobi operator preserves the corresponding finite-dimensional $K$-equivariant subspaces, reducing the spectral problem to finite-dimensional linear algebra on these subspaces. In particular, its second-order part can be studied through the corresponding Laplace and Casimir eigenvalues. This construction follows the method of Urakawa \cite[Section~8.1]{HU}.
	
	For $g\in G$, let $L_g:G\to G$ and $\tau_g:G/K\to G/K$ denote the left translations on $G$ and $G/K$, respectively. Let $\{\xi_A\mid1\leq A\leq10\}$ be the dual basis of $\mathfrak g^*$ defined by
	\begin{align*}
		\xi_A(X_B)=\langle X_A,X_B\rangle_{alg}
	\end{align*}
	for any $1\le A,B\le10$. We begin by identifying $T^*(G/K)$ with an associated bundle.
	\begin{lemma}\label{lem: isomorphic T^*G/K}
		There is a natural $G$-equivariant vector-bundle isomorphism
		\begin{align*}
			T^*G/K\cong G\times_K\mathfrak{m}^*
		\end{align*}
		with $[gk,\xi]=[g,Ad^*(k^{-1})\xi]$ for any $g\in G$, $k\in K$ and $\xi\in\mathfrak{m}^*$, where
		\begin{align*}
			Ad^*(k)\xi(X):=\xi(Ad(k)(X))
		\end{align*}
		for any $k\in K$, $X\in\mathfrak{m}$ and $\xi\in\mathfrak{m}^*$.
	\end{lemma}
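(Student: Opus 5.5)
The plan is to identify $T^*(G/K)$ first with the tangent-bundle picture. Since $\pi_{K*}:\mathfrak m\to T_K(G/K)$ is an isomorphism, I would define $\Theta:G\times_K\mathfrak m^*\to T^*(G/K)$ by
\begin{align*}
\Theta([g,\xi])=\xi\circ\pi_{K*}^{-1}|_{\mathfrak m}\circ(\tau_{g})_*^{-1}\in T^*_{gK}(G/K),
\end{align*}
that is, $\Theta([g,\xi])\big((\tau_g)_*\pi_{K*}X\big)=\xi(X)$ for $X\in\mathfrak m$. Because $\tau_g:G/K\to G/K$ is a diffeomorphism sending $K$ to $gK$, this gives a linear isomorphism $\mathfrak m^*\to T^*_{gK}(G/K)$ for each fixed $g$.

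The key step is to check that $\Theta$ is well defined. For this I would use the identity $\tau_k\circ\pi_K=\pi_K\circ C_k$, where $C_k$ denotes conjugation by $k$, which is valid because $k$ fixes $K$. Differentiating at the identity gives $(\tau_k)_*\pi_{K*}X=\pi_{K*}\operatorname{Ad}(k)X$, and $\operatorname{Ad}(k)$ preserves $\mathfrak m$ since $\mathfrak m$ is the $\langle\ ,\ \rangle_{alg}$-orthogonal complement of the $\operatorname{Ad}(K)$-invariant subspace $\mathfrak k$. Now evaluate both representatives of the same class on the vector $(\tau_g)_*\pi_{K*}\operatorname{Ad}(k)X=(\tau_{gk})_*\pi_{K*}X$:
\begin{align*}
\Theta([gk,\xi])\big((\tau_{gk})_*\pi_{K*}X\big)&=\xi(X),\\
\Theta([g,\operatorname{Ad}^*(k^{-1})\xi])\big((\tau_{g})_*\pi_{K*}\operatorname{Ad}(k)X\big)&=\xi(\operatorname{Ad}(k^{-1})\operatorname{Ad}(k)X)=\xi(X).
\end{align*}
These agree, so $\Theta$ respects the relation $[gk,\xi]=[g,\operatorname{Ad}^*(k^{-1})\xi]$.

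It remains to collect the structural properties. Smoothness follows because $\Theta$ is induced from the smooth map $G\times\mathfrak m^*\to T^*(G/K)$, and the quotient $G\times\mathfrak m^*\to G\times_K\mathfrak m^*$ is a submersion. $\Theta$ is fiberwise linear and bijective, covers the identity of $G/K$, and has a smooth inverse, which one sees by using local sections of $G\to G/K$. For $G$-equivariance, $\tau_{g'}\circ\tau_g=\tau_{g'g}$ gives $\Theta([g'g,\xi])=(\tau_{g'}^{-1})^*\Theta([g,\xi])$, which is exactly the natural $G$-action on covectors. I do not expect a real obstacle; the only subtle point is getting the direction of $\operatorname{Ad}^*(k^{-1})$ right in the well-definedness computation above.
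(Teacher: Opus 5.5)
Your proof is correct and is essentially the paper's argument. Since $(\tau_g)_*\pi_{K*}=\pi_{K*}L_{g*}$, your map $[g,\xi]\mapsto\xi\circ(\pi_{K*}L_{g*}|_{\mathfrak m})^{-1}$ is exactly the inverse of the paper's map $\xi\mapsto[g,L_g^*\pi_K^*\xi]$, and your well-definedness check via $\tau_k\circ\pi_K=\pi_K\circ C_k$ is the same computation the paper performs; you additionally spell out smoothness and $G$-equivariance, which the paper leaves implicit.
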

	\begin{proof}
		The isomorphism is given by 
		\begin{align*}
			T^*G/K\to G\times_K\mathfrak{m}^*,\quad\xi\mapsto [g,L_g^*\pi_K^*\xi]
		\end{align*}
		for any $\xi\in T_{gK}^*G/K$. Since
		\begin{align*}
			[gk,L_{gk}^*\pi_K^*\xi]=[g,Ad^*(k^{-1})L_{gk}^*\pi_K^*\xi]=[g,L_g^*\pi_K^*\xi]
		\end{align*}
		for any $k\in K$, the map is well-defined. The inverse map is given by
		\begin{align*}
			[g,\xi']\mapsto\tau_{g^{-1}}^*(\pi_K^*\mid_{\mathfrak{m}})^{-1}\xi'.
		\end{align*}
	\end{proof}
	The following lemma follows immediately from Lemmas \ref{lem: isomorphic g_E} and \ref{lem: isomorphic T^*G/K}.
	\begin{lemma}\label{lem:Theta}
		There is a bundle isomorphism
		\begin{align}
			\Theta:T^*G/K\otimes\mathfrak{g}_E\to G\times_K(\mathfrak{m}^*\otimes\mathfrak{k})
		\end{align}
		with $[gq,\xi\otimes X]=[g,Ad^*(q^{-1})\xi\otimes Ad(q)X]$ via
		\begin{align*}
			\xi\otimes[g,X]\mapsto[g,L_g^*\pi_K^*\xi]\otimes[g,X]\mapsto[g,L_g^*\pi_K^*\xi\otimes X]
		\end{align*}
		on each fiber $(T^*G/K\otimes\mathfrak{g}_E)_{gK}$.\\
		Moreover, $\Theta$ naturally induces an isomorphism
		\begin{align}
			\Theta:\Omega^1(\mathfrak{g}_E)\to\Gamma(G\times_K(\mathfrak{m}^*\otimes\mathfrak{k})).
		\end{align}
	\end{lemma}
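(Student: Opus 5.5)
The plan is to obtain $\Theta$ by tensoring the two isomorphisms already established, Lemma~\ref{lem: isomorphic T^*G/K} for $T^*G/K$ and Lemma~\ref{lem: isomorphic g_E} for $\mathfrak{g}_E$, and then to use the general fact that the tensor product of two homogeneous bundles $G\times_K V$ and $G\times_K W$ is canonically $G\times_K(V\otimes W)$. The only real content is checking that the displayed formula is independent of the representative $g$ of the base point $gK$. The passage from bundles to sections is then formal.

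First I would record the canonical identification
\begin{align*}
(G\times_K V)\otimes(G\times_K W)\cong G\times_K(V\otimes W),\qquad [g,v]\otimes[g,w]\mapsto[g,v\otimes w],
\end{align*}
valid for any finite-dimensional $K$-representations $V$ and $W$. To see that it is well defined, represent both factors over the same base point $gK$ using the same $g$. If $g$ is replaced by $gq$ with $q\in K$, then $[g,v]=[gq,q^{-1}\cdot v]$ and $[g,w]=[gq,q^{-1}\cdot w]$. Hence $[gq,q^{-1}v\otimes q^{-1}w]=[g,v\otimes w]$, because $K$ acts diagonally on $V\otimes W$. On each fiber the map is a linear isomorphism: a choice of $g$ trivializes all three fibers as $V$, $W$ and $V\otimes W$. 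Smoothness follows from local sections of $G\to G/K$, which trivialize all three bundles simultaneously.

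Next I would apply this with $V=\mathfrak{m}^*$, carrying $\operatorname{Ad}^*(q^{-1})$, and $W=\mathfrak{k}$, carrying $\operatorname{Ad}(q)$. This produces exactly the equivalence relation $[gq,\xi\otimes X]=[g,\operatorname{Ad}^*(q^{-1})\xi\otimes\operatorname{Ad}(q)X]$ stated in the lemma. Composing with the tensor product of the isomorphisms $\xi\mapsto[g,L_g^*\pi_K^*\xi]$ from Lemma~\ref{lem: isomorphic T^*G/K} and $\mathfrak{g}_E\cong G\times_K\mathfrak{k}$ from Lemma~\ref{lem: isomorphic g_E} gives the displayed two-step description of $\Theta$. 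Both factors and the tensoring map are $G$-equivariant under left multiplication on the first slot, so $\Theta$ is a $G$-equivariant vector-bundle isomorphism.

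Finally, $\Omega^1(\mathfrak{g}_E)=\Gamma(T^*G/K\otimes\mathfrak{g}_E)$, and a smooth bundle isomorphism covering the identity induces a linear bijection on smooth sections by $s\mapsto\Theta\circ s$, with inverse $s\mapsto\Theta^{-1}\circ s$. For the later use with Peter–Weyl, I would also note the standard identification of $\Gamma(G\times_K(\mathfrak{m}^*\otimes\mathfrak{k}))$ with $(C^\infty(G)\otimes\mathfrak{m}^*\otimes\mathfrak{k})_K$. A section corresponds to the function $f$ with $s(gK)=[g,f(g)]$, and well-definedness forces $f(gq)=q^{-1}\cdot f(g)$.

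The only point requiring care, and the main obstacle, is the consistency of the two $K$-actions. The action on $\mathfrak{m}^*$ must be the contragredient $\operatorname{Ad}^*(q^{-1})$, matching Lemma~\ref{lem: isomorphic T^*G/K}, while the action on $\mathfrak{k}$ is $\operatorname{Ad}(q)$. In particular one must confirm that the relation $[g\operatorname{diag}(1,q),Y]=[g,\operatorname{Ad}(q)Y]$ in $E$ follows the same left/right convention as $[gk,\xi]=[g,\operatorname{Ad}^*(k^{-1})\xi]$. Otherwise the diagonal action would not match and the tensoring map would fail to be well defined. I expect this to be a bookkeeping check rather than a genuine difficulty.
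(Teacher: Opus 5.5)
Your proposal is correct and follows the paper's route: the paper says only that the lemma follows immediately from the preceding identifications $T^*G/K\cong G\times_K\mathfrak{m}^*$ and $\mathfrak{g}_E\cong G\times_K\mathfrak{k}$, and your tensoring via $(G\times_K V)\otimes(G\times_K W)\cong G\times_K(V\otimes W)$ writes out the well-definedness check that the paper leaves implicit. The convention check you flag does succeed: both relations have the form $[gk,v]=[g,\rho(k)v]$, with $\rho(k)=\operatorname{Ad}^*(k^{-1})$ on $\mathfrak{m}^*$ and $\rho(k)=\operatorname{Ad}(k)$ on $\mathfrak{k}$, and each is a genuine left representation of $K$, so the diagonal action is consistent.
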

	The following Lemma gives the identification of $\Omega^1(\mathfrak{g}_E)$ with $K$-equivariant $\mathfrak{m}^*\otimes\mathfrak{k}$-valued maps.
	\begin{lemma}\label{Phi}
		Let
		\begin{align*}
			C_K^\infty(G,\mathfrak{m}^*\otimes\mathfrak{k})=\{f=\sum_{i=1}^7\xi_i\otimes f_i\in C^\infty(G,\mathfrak{m}^*\otimes\mathfrak{k})\mid f(gk)=\sum_{i=1}^7Ad^*(k)\xi_i\otimes Ad(k^{-1})f_i(g),\ \forall\ g\in G, k\in K\}.
		\end{align*}
		Then the following map is an isomorphism
		\begin{equation}
			\begin{split}
				\Phi:C_K^\infty(G,\mathfrak{m}^*\otimes\mathfrak{k})&\to\Gamma(G\times_K(\mathfrak{m}^*\otimes\mathfrak{k}))\\
				\Phi(f)(gK)&=[g,\sum_{i=1}^7\xi_i\otimes f_i(g)]
			\end{split}
		\end{equation}
	\end{lemma}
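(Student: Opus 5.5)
The claim is the standard correspondence between sections of a homogeneous vector bundle and equivariant functions on the group. Here it is applied to the $K$-representation $\mathfrak m^*\otimes\mathfrak k$, with $k$ acting by $\xi\otimes X\mapsto Ad^*(k^{-1})\xi\otimes Ad(k)X$ as in Lemma~\ref{lem:Theta}. I will check three things in turn: that $\Phi(f)$ is a well-defined smooth section, that $\Phi$ is injective, and that every section arises in this way.

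\emph{Well-definedness.} Take $f\in C_K^\infty(G,\mathfrak{m}^*\otimes\mathfrak{k})$, $g\in G$ and $k\in K$. The equivariance condition in the definition gives
\begin{align*}
[gk,f(gk)]=\Big[gk,\sum_i Ad^*(k)\xi_i\otimes Ad(k^{-1})f_i(g)\Big].
\end{align*}
By the relation $[gk,v]=[g,k\cdot v]$ in $G\times_K(\mathfrak m^*\otimes\mathfrak k)$, the right-hand side equals
\begin{align*}
\Big[g,\sum_i Ad^*(k^{-1})Ad^*(k)\xi_i\otimes Ad(k)Ad(k^{-1})f_i(g)\Big]=[g,f(g)].
\end{align*}
Here $Ad^*(k^{-1})Ad^*(k)=\mathrm{id}$, which follows from $Ad^*(k)\xi=\xi\circ Ad(k)$ and $Ad(k)$ preserving $\mathfrak m$. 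So $\Phi(f)(gK)$ depends only on the coset $gK$.

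\emph{Smoothness.} The map $g\mapsto[g,f(g)]$ is the composition of the smooth map $G\to G\times(\mathfrak m^*\otimes\mathfrak k)$ with the quotient map. Since $\pi_K$ is a submersion with local sections, $\Phi(f)$ is smooth on $G/K$. Linearity of $\Phi$ is clear.

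\emph{Injectivity.} Suppose $[g,f(g)]=[g,0]$ for all $g$. Since $G\times_K V$ restricted to a fixed representative $g$ identifies the fiber with $V$, this forces $f(g)=0$ for all $g$.

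\emph{Surjectivity.} Given a smooth section $s$, I define $f(g)$ to be the unique $v\in\mathfrak m^*\otimes\mathfrak k$ with $s(gK)=[g,v]$. Uniqueness holds because $K$ acts freely on the right on $G$. From $s(gkK)=s(gK)$ we get
\begin{align*}
[gk,f(gk)]=[g,f(g)]=[gk,k^{-1}\cdot f(g)],
\end{align*}
so $f(gk)=k^{-1}\cdot f(g)$. This is exactly the stated transformation law $\sum_i Ad^*(k)\xi_i\otimes Ad(k^{-1})f_i(g)$.

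The only point needing care, and the main (mild) obstacle, is the smoothness of $f$. I would use local smooth sections $\sigma:U\to G$ of $\pi_K$, which give local trivializations $U\times V\cong(G\times_KV)|_U$ via $(x,v)\mapsto[\sigma(x),v]$. In such a trivialization $s$ corresponds to a smooth $v(x)$. Writing $g=\sigma(x)k$, we then have $f(g)=k^{-1}\cdot v(x)$, which is smooth in $g$ because $(x,k)\mapsto\sigma(x)k$ is a diffeomorphism onto $\pi_K^{-1}(U)$. Finally $\Phi(f)=s$ by construction, so $\Phi$ is an isomorphism.
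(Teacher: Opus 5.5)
Your proof is correct and follows the same route as the paper: your well-definedness computation is the paper's, and your inverse (reading off the unique $v$ with $s(gK)=[g,v]$ and checking $f(gk)=k^{-1}\cdot f(g)$) is the paper's $\Phi^{-1}$. You additionally supply details the paper leaves implicit, namely uniqueness and injectivity via freeness of the right $K$-action, and smoothness of $\Phi(f)$ and $\Phi^{-1}(s)$ via local sections of $\pi_K$.
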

	\begin{proof}
		For any $k\in K$ and $f\in C_K^\infty(G,\mathfrak{m}^*\otimes\mathfrak{k})$, we have
		\begin{align*}
			[gk,\sum_{i=1}^7\xi_i\otimes f_i(gk)]=[g,\sum_{i=1}^7Ad^*(k^{-1})\xi_i\otimes Ad(k)f_i(gk)]=[g,\sum_{i=1}^7\xi_i\otimes f_i(g)].
		\end{align*}
		Hence $\Phi$ is well-defined. The inverse map is given by
		\begin{align*}
			\Phi^{-1}(s)(g)=\sum_{i=1}^7\xi_i\otimes s_i(g)
		\end{align*}
		for any $s\in\Gamma(G\times_K(\mathfrak{m}^*\otimes\mathfrak{k}))$ written as $s(gK)=[g,\sum_{i=1}^7\xi_i\otimes s_i(g)]$.
	\end{proof}
	Writing each $\mathfrak{k}$-valued coefficient in the basis $\{X_\alpha\}_{\alpha=8}^{10}$, we immediately obtain the following identification, and hence we omit the proof.
	\begin{lemma}
		Set $R_kf(g)=f(gk)$ for $k\in K$ and $f\in C^\infty(G)$, and define
		\begin{align*}
			&(C^\infty(G)\otimes\mathfrak{m}^*\otimes\mathfrak{k})_K\\
			=&\{\sum_{i=1}^7\sum_{\alpha=8}^{10}f_{i\alpha}\otimes\xi_i\otimes X_\alpha\in C^\infty(G)\otimes\mathfrak{m}^*\otimes\mathfrak{k}\mid \sum_{i=1}^7\sum_{\alpha=8}^{10}R_kf_{i\alpha}\otimes\xi_i\otimes X_\alpha=\sum_{i=1}^7\sum_{\alpha=8}^{10}f_{i\alpha}\otimes Ad^*(k)\xi_i\otimes Ad(k^{-1})X_\alpha,\ \forall\ k\in K\}.
		\end{align*}
		Then there is an isomorphism
		\begin{equation}
			\begin{split}
				\Psi:C_K^\infty(G,\mathfrak{m}^*\otimes\mathfrak{k})&\to(C^\infty(G)\otimes\mathfrak{m}^*\otimes\mathfrak{k})_K\\
				\Psi(\sum_{i=1}^7\sum_{\alpha=8}^{10}\xi_i\otimes f_{i\alpha}X_\alpha)&=\sum_{i=1}^7\sum_{\alpha=8}^{10}f_{i\alpha}\otimes\xi_i\otimes X_\alpha.
			\end{split}
		\end{equation}
	\end{lemma}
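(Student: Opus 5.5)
The statement is essentially a bookkeeping identification, so I would prove it by writing $\Psi$ out in coordinates and checking that the equivariance conditions defining the two spaces coincide term by term.

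\textbf{Expanding in coordinates.} Let $f\in C_K^\infty(G,\mathfrak{m}^*\otimes\mathfrak{k})$ and write $f=\sum_i\xi_i\otimes f_i$ as in Lemma~\ref{Phi}. Expanding each $\mathfrak{k}$-valued coefficient in the basis $\{X_\alpha\}_{\alpha=8}^{10}$ gives $f_i=\sum_\alpha f_{i\alpha}X_\alpha$. The coefficients are
\begin{align*}
f_{i\alpha}=\langle f_i,X_\alpha\rangle_{alg}\in C^\infty(G),
\end{align*}
and they are uniquely determined because $\{X_\alpha\}$ is a basis. Hence $\Psi$ is well defined and injective on all of $C^\infty(G,\mathfrak{m}^*\otimes\mathfrak{k})$. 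Moreover, the identification
\begin{align*}
C^\infty(G,\mathfrak{m}^*\otimes\mathfrak{k})\cong C^\infty(G)\otimes\mathfrak{m}^*\otimes\mathfrak{k}
\end{align*}
is bijective, since $\mathfrak{m}^*\otimes\mathfrak{k}$ is finite-dimensional.

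\textbf{Comparing the equivariance conditions.} Evaluating the defining condition of $C_K^\infty$ at $gk$ gives
\begin{align*}
\sum_{i,\alpha}f_{i\alpha}(gk)\,\xi_i\otimes X_\alpha=\sum_{i,\alpha}f_{i\alpha}(g)\,Ad^*(k)\xi_i\otimes Ad(k^{-1})X_\alpha .
\end{align*}
Here I use that $Ad(k^{-1})$ is linear, so that $Ad(k^{-1})f_i(g)=\sum_\alpha f_{i\alpha}(g)Ad(k^{-1})X_\alpha$. Since $f_{i\alpha}(gk)=(R_kf_{i\alpha})(g)$, this identity holding for every $g$ is exactly the condition defining $(C^\infty(G)\otimes\mathfrak{m}^*\otimes\mathfrak{k})_K$. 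The argument runs in both directions, so $\Psi$ maps $C_K^\infty$ bijectively onto the $K$-invariant subspace.

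\textbf{Where care is needed.} The only subtle point is the identification of the algebraic tensor product with the space of smooth vector-valued maps. This works only because the second factor $\mathfrak{m}^*\otimes\mathfrak{k}$ is finite-dimensional, so the tensor product needs no completion. Linearity of $\Psi$ is immediate, and its inverse is $\sum_{i,\alpha}f_{i\alpha}\otimes\xi_i\otimes X_\alpha\mapsto\sum_i\xi_i\otimes\sum_\alpha f_{i\alpha}X_\alpha$.
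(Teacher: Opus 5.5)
Your proof is correct and takes the same approach as the paper. The paper omits the proof, saying only that the identification follows immediately by expanding each $\mathfrak{k}$-valued coefficient in the basis $\{X_\alpha\}_{\alpha=8}^{10}$; that is exactly what you do. You also correctly spell out that $C^\infty(G)\otimes\mathfrak{m}^*\otimes\mathfrak{k}\cong C^\infty(G,\mathfrak{m}^*\otimes\mathfrak{k})$ because the second factor is finite-dimensional, so the two $K$-equivariance conditions correspond term by term.
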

	The compatibility of the preceding identifications with the natural left $G$-actions follows directly from their definitions. We record the precise statement below and omit the proof.
	\begin{lemma}
		Define $G$-actions on $\Gamma(G\times_K(\mathfrak{m}^*\otimes\mathfrak{k}))$, $\Omega^1(\mathfrak{g}_E)$, $C_K^\infty(G,\mathfrak{m}^*\otimes\mathfrak{k})$ and $(C^\infty(G)\otimes\mathfrak{m}^*\otimes\mathfrak{k})_K$ such that for any
		\begin{align*}
			&s(gK)=[g,\sum_{i=1}^7\xi_i\otimes s_i(g)]\in\Gamma(G\times_K(\mathfrak{m}^*\otimes\mathfrak{k})),\\
			&\tilde s(gK)=\sum_{i=1}^7L_{g^{-1}}^*\pi_K^*\xi_i\otimes[g,s_i(g)]\in\Omega^1(\mathfrak{g}_E),\\
			&f(g)=\sum_{i=1}^7\xi_i\otimes f_i(g)\in C_K^\infty(G,\mathfrak{m}^*\otimes\mathfrak{k}),\\
			&\sum_{i=1}^7\sum_{\alpha=8}^{10}f_{i\alpha}\otimes\xi_i\otimes X_\alpha\in(C^\infty(G)\otimes\mathfrak{m}^*\otimes\mathfrak{k})_K,
		\end{align*}
		we have
		\begin{align*}
			\tau_{x}s(gK)&=[g,\sum_{i=1}^7\xi_i\otimes s_i(x^{-1}g)],\\
			\tau_x\tilde s(gK)&=\sum_{i=1}^7L_{g^{-1}}^*\pi_K^*\xi_i\otimes [g,s_i(x^{-1}g)],\\
			\tau_x f(g)&=f(x^{-1}g),\\
			\tau_x(\sum_{i=1}^7\sum_{\alpha=8}^{10}f_{i\alpha}\otimes\xi_i\otimes X_\alpha)&=\sum_{i=1}^7\sum_{\alpha=8}^{10}\tau_xf_{i\alpha}\otimes\xi_i\otimes X_\alpha,
		\end{align*}
		where $\tau_x f(g)=f(x^{-1}g)$. Then $\Phi$, $\Theta$ and $\Psi$ are $G$-equivariant, that is, $\Phi\circ\tau_x=\tau_x\circ\Phi$, $\Theta\circ\tau_x=\tau_x\circ\Theta$, $\tau_x\circ\Psi=\Psi\circ\tau_x$ for any $x\in G$.
	\end{lemma}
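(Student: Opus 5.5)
The plan is to verify each identity directly from the definitions. Before doing so, I would check that every stated action is well defined on its space, since the maps themselves were already shown to be isomorphisms in Lemmas~\ref{lem:Theta} and~\ref{Phi} and the lemma preceding this one. The key observation is that the left action $\tau_x$, namely precomposition with $L_{x^{-1}}$, commutes with right translations $R_k$ for $k\in K$. Indeed,
\begin{align*}
(\tau_xf)(gk)=f(x^{-1}gk)=\sum_{i=1}^7 Ad^*(k)\xi_i\otimes Ad(k^{-1})f_i(x^{-1}g).
\end{align*}
Hence $\tau_x$ preserves $C_K^\infty(G,\mathfrak{m}^*\otimes\mathfrak{k})$. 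Applying the same identity coefficientwise shows that $\tau_x$ preserves $(C^\infty(G)\otimes\mathfrak{m}^*\otimes\mathfrak{k})_K$. The formula for $\tau_xs$ gives a well-defined section by the same computation, because $s_i(x^{-1}gk)$ transforms exactly as $s_i(x^{-1}g)$ does under the relation $[gk,\cdot]=[g,Ad^*(k^{-1})\otimes Ad(k)\,\cdot]$.

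With well-definedness in hand, the equivariance of $\Phi$ and of $\Psi$ is immediate. For $\Phi$,
\begin{align*}
\Phi(\tau_xf)(gK)=\Big[g,\sum_{i=1}^7\xi_i\otimes f_i(x^{-1}g)\Big]=\tau_x\Phi(f)(gK).
\end{align*}
For $\Psi$, expanding $f_i=\sum_{\alpha=8}^{10}f_{i\alpha}X_\alpha$ shows that taking coefficients in the fixed basis $\{\xi_i\otimes X_\alpha\}$ commutes with precomposition by $L_{x^{-1}}$. Therefore $\Psi\circ\tau_x=\tau_x\circ\Psi$.

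For $\Theta$, I would write $\tilde s=\Theta^{-1}(s)$ using the inverse of the isomorphism in Lemma~\ref{lem: isomorphic T^*G/K}. This gives $\tilde s(gK)=\sum_i(\tau_{g^{-1}})^*(\pi_K^*|_{\mathfrak{m}})^{-1}\xi_i\otimes[g,s_i(g)]$, which is the form written in the statement. The statement's formula for $\tau_x\tilde s$ then corresponds, under $\Theta$, precisely to $\tau_xs$, because both keep the frame attached to the representative $g$ and only replace $s_i(g)$ by $s_i(x^{-1}g)$.

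If one also wants to confirm that this formula agrees with the geometric pushforward $(\tau_x^{-1})^*\tilde s$, one uses $\pi_K\circ L_g=\tau_g\circ\pi_K$. This identity gives $L_g^*\pi_K^*=\pi_K^*\tau_g^*$ on $T^*_{gK}(G/K)$, and it moves the covector at $x^{-1}gK$ to $gK$ consistently with the frame $L_{g^{-1}}^*\pi_K^*\xi_i$. This bookkeeping is the step I expect to be the main obstacle. One must be careful to track which representative of $gK$ and which translation ($L_g$ versus $L_{x^{-1}g}$) is used when transporting covectors, and to check that the $\mathfrak{g}_E$-factor $[g,\cdot]$ transforms compatibly, using the $G$-equivariance of $E'\cong E$ established earlier. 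Once the pushforward is matched with the stated formula, $\Theta\circ\tau_x=\tau_x\circ\Theta$ follows by the same one-line substitution as for $\Phi$.
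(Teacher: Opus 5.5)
Your proposal is correct and follows the same route as the paper, which omits the proof as an immediate consequence of the definitions. In each case equivariance reduces to two facts: $\tau_x$ (precomposition with $L_{x^{-1}}$) commutes with right $K$-translations, and the frame attached to the representative $g$ is left untouched. The pushforward bookkeeping you flag as the main obstacle is not needed for the lemma as stated, since the actions are defined by the displayed formulas; in any case it follows at once from $\tau_{x^{-1}}^*\tau_{g^{-1}x}^*=\tau_{g^{-1}}^*$ together with $x\cdot[x^{-1}g,Y]=[g,Y]$.
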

	\subsection{The Levi-Civita connection}
	In this section, we compute the Levi-Civita connection at the base point $K\in G/K$ with respect to the local frame constructed below using left translations.
	\begin{definition}
		For any $X\in\mathscr{X}(G/K)$, we call $\tilde X$ the horizontal lift of $X$ if
		\begin{align*}
			&(1)\ \pi_{K*}\tilde X(g)=X(gK)\ \forall\ g\in G,\\
			&(2)\ \tilde X(g)\in L_{g*}\mathfrak{m}\ \forall\ g\in G.
		\end{align*}
	\end{definition}
	\begin{definition}
		A connection $D'$ on $G/K$ is called $G$-invariant if
		\begin{align*}
			\tau_{x*}(D'_XY(gK))=D'_{\tau_{x*}X}\tau_{x*}Y(xgK)
		\end{align*}
		for any $X,Y\in\mathscr{X}(G/K)$ and $x\in G$, where $\tau_{x*}X(gK)=\tau_{x*}(X(x^{-1}gK))$.
	\end{definition}
	\begin{lemma}\label{sigma}
		There exists an open neighborhood $U$ of $K$ in $G/K$ and a smooth map $\sigma:U\to G$ such that $\pi_K\circ\sigma=id_U$ and $\sigma(\exp(X)K)=\exp(X)$ for any $\exp(X)\in \sigma(U)$.
	\end{lemma}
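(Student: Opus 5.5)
The plan is to use the exponential map restricted to $\mathfrak{m}$ to produce the section. Consider the smooth map
\begin{align*}
	\Lambda:\mathfrak{m}\to G/K,\qquad \Lambda(X)=\pi_K(\exp X).
\end{align*}
First compute its differential at $0$. Since $d\exp_0=\mathrm{id}$, we get $d\Lambda_0=\pi_{K*}|_{\mathfrak{m}}$. By the orthogonal decomposition $\mathfrak{g}=\mathfrak{k}\oplus\mathfrak{m}$ and $\ker\pi_{K*}=\mathfrak{k}$, this map is a linear isomorphism $\mathfrak{m}\to T_K G/K$; this is exactly the identification already used in Section~2.3. The inverse function theorem therefore gives an open neighborhood $V$ of $0$ in $\mathfrak{m}$ and an open neighborhood $U$ of $K$ in $G/K$ such that $\Lambda|_V:V\to U$ is a diffeomorphism.

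Next, define
\begin{align*}
	\sigma:=\exp\circ(\Lambda|_V)^{-1}:U\to G.
\end{align*}
This map is smooth as a composition of smooth maps. It is a section, since
\begin{align*}
	\pi_K\circ\sigma=\pi_K\circ\exp\circ(\Lambda|_V)^{-1}=\Lambda\circ(\Lambda|_V)^{-1}=\mathrm{id}_U.
\end{align*}
For the normalization, note that every element of $\sigma(U)$ has the form $\exp(X)$ with $X\in V$. For such $X$ we have $\exp(X)K=\Lambda(X)$, and hence
\begin{align*}
	\sigma(\exp(X)K)=\exp\big((\Lambda|_V)^{-1}(\Lambda(X))\big)=\exp(X).
\end{align*}

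The one point needing care is how to read the condition ``for any $\exp(X)\in\sigma(U)$''. An element $g\in\sigma(U)$ might also equal $\exp(X')$ for some $X'\in\mathfrak{m}$ lying outside $V$. Even then the identity still holds: $\exp(X')K=gK$ lies in $U$, and $\sigma(gK)=g$ because $g$ is in the image of the section $\sigma$. So $\sigma(\exp(X')K)=g=\exp(X')$. Concretely, if $g=\sigma(u)$ then $gK=\pi_K(\sigma(u))=u$, which gives $\sigma(gK)=g$. The condition therefore holds for every representation of $g$ as an exponential, and no restriction to $X\in V$ is needed.

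This handles the only potential obstacle. The remaining steps are routine applications of the inverse function theorem, and $U$ may be shrunk further if a connected or normal-coordinate neighborhood is wanted later.
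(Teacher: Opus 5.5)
Your proof is correct and is essentially the paper's argument: the paper likewise applies the inverse function theorem to $X\mapsto\exp(X)K$ on $\mathfrak{m}$, whose differential at $0$ is $\pi_{K*}|_{\mathfrak{m}}$, and sets $\sigma=\exp\circ(\psi|_{U'})^{-1}$. Your extra remark about representations $g=\exp(X')$ with $X'\notin V$ is also valid, since $\sigma(\pi_K(g))=g$ holds for every $g$ in the image of any section; the substantive content, which you establish, is that $\exp(X)\in\sigma(U)$ for all $X\in V$.
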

	\begin{proof}
		Consider $\psi:\mathfrak{m}\to G/K$ with
		\begin{align*}
			\psi(X)=\exp(X)K=\pi_K\circ\exp(X).
		\end{align*}
		Since $\psi_*=\pi_{K*}\circ \exp_*:T_0\mathfrak{m}\to T_KG/K$ is an isomorphism, by the inverse function theorem, there exists an open neighborhood $U'$ of $0\in\mathfrak{m}$ such that $\psi|_{U'}:U'\to\psi(U'):=U$ is a diffeomorphism. Then
		\begin{align*}
			\sigma=\exp\circ(\psi|_{U'})^{-1}:U\to G
		\end{align*}
		satisfies the required properties.
	\end{proof}
	To describe the Levi-Civita connection on $G/K$ algebraically, we apply Nomizu's result \cite[Theorem 8.1]{KN} to obtain the following proposition.
	\begin{proposition}\label{D=D^0+alpha}
		Let $D^0$ and $D'$ be $G$-invariant connections on $G/K$. Then there is a unique bilinear map $\alpha:\mathfrak{m}\times\mathfrak{m}\to\mathfrak{m}$ such that
		\begin{align*}
			D'_{\hat X}\hat Y(K)=D^0_{\hat X}\hat Y(K)+\pi_{K*}\alpha((\pi_{K*}|_{\mathfrak{m}})^{-1}X',(\pi_{K*}|_{\mathfrak{m}})^{-1}Y')
		\end{align*}
		for any $\hat X,\hat Y\in\mathscr{X}(G/K)$ with $\hat X(K)=X'$ and $\hat Y(K)=Y'$. Moreover, $\alpha$ satisfies
		\begin{align*}
			Ad(k)(\alpha(X,Y))=\alpha(Ad(k)X,Ad(k)Y)
		\end{align*}
		for any $X,Y\in\mathfrak{m}$ and $k\in K$.\\
		Moreover, since $D'$ and $D^0$ are $G$-invariant, we have
		\begin{align*}
			D'_{\hat X}\hat Y(gK)=D^0_{\hat X}\hat Y(gK)+\tau_{\sigma(gK)*}\pi_{K*}\alpha((\pi_{K*}|_{\mathfrak{m}})^{-1}\tau_{\sigma(gK)^{-1}*}(\hat X(gK)),(\pi_{K*}|_{\mathfrak{m}})^{-1}\tau_{\sigma(gK)^{-1}*}(\hat Y(gK)))
		\end{align*}
		for any $gK\in U$, where $\sigma:U\to G$ is defined in Lemma \ref{sigma}.
	\end{proposition}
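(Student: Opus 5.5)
Proposition \ref{D=D^0+alpha} is essentially Nomizu's correspondence \cite[Theorem 8.1]{KN} between $G$-invariant connections on a reductive homogeneous space and $\operatorname{Ad}(K)$-equivariant bilinear maps on $\mathfrak m$. The plan is to work with the difference tensor $T=D'-D^0$ and read everything off from its properties.

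\textbf{Step 1: $T$ is a tensor.} Since $D'$ and $D^0$ are affine connections, $T(\hat X,\hat Y)=D'_{\hat X}\hat Y-D^0_{\hat X}\hat Y$ is $C^\infty$-linear in both arguments. Hence $T$ is a $(1,2)$-tensor field on $G/K$. In particular, its value at $K$ depends only on $X'=\hat X(K)$ and $Y'=\hat Y(K)$.

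\textbf{Step 2: define $\alpha$.} Set
\begin{align*}
\alpha(X,Y)=(\pi_{K*}|_{\mathfrak m})^{-1}T_K(\pi_{K*}X,\pi_{K*}Y),\qquad X,Y\in\mathfrak m.
\end{align*}
With this definition the first identity holds at $K$. Uniqueness follows because $\pi_{K*}|_{\mathfrak m}$ is an isomorphism onto $T_K(G/K)$, so $\alpha$ is forced by $T_K$.

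\textbf{Step 3: $G$-invariance of $T$ and equivariance of $\alpha$.} Both connections are $G$-invariant, so their difference satisfies
\begin{align*}
\tau_{x*}T_{gK}(u,v)=T_{xgK}(\tau_{x*}u,\tau_{x*}v).
\end{align*}
Take $x=k\in K$ and $g=e$. This uses the identity
\begin{align*}
\tau_{k*}\circ\pi_{K*}=\pi_{K*}\circ\operatorname{Ad}(k)\quad\text{on }\mathfrak m,
\end{align*}
which comes from $k\exp(tX)K=\exp(t\operatorname{Ad}(k)X)kK$ with $kK=K$, together with the $\operatorname{Ad}(K)$-invariance of $\mathfrak m$. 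Substituting gives
\begin{align*}
\operatorname{Ad}(k)\alpha(X,Y)=\alpha(\operatorname{Ad}(k)X,\operatorname{Ad}(k)Y).
\end{align*}

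\textbf{Step 4: the formula at $gK\in U$.} Put $x=\sigma(gK)$, with $\sigma$ from Lemma \ref{sigma}, so that $\tau_{x^{-1}}(gK)=K$. Invariance of $T$ then gives
\begin{align*}
T_{gK}(u,v)=\tau_{x*}T_K(\tau_{x^{-1}*}u,\tau_{x^{-1}*}v).
\end{align*}
Rewriting $T_K$ through $\alpha$ yields the stated expression. By the equivariance in Step 3, the formula does not depend on which lift $x$ of $gK$ is used; $\sigma$ only makes the choice smooth.

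\textbf{Main obstacle.} The only delicate point is the identity $\tau_{k*}\pi_{K*}=\pi_{K*}\operatorname{Ad}(k)$ and keeping track of the pushforward conventions in the definition of $G$-invariance. The rest is bookkeeping.
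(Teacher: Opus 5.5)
Your argument is correct. The difference tensor $T=D'-D^0$, its isotropy invariance combined with $\tau_{k*}\circ\pi_{K*}=\pi_{K*}\circ\operatorname{Ad}(k)$ on $\mathfrak m$, and transport by $\sigma(gK)$ yield all three assertions. The paper gives no argument beyond invoking Nomizu's theorem, and your proof is the elementary content of the direction actually used here, namely passing from invariant connections to an $\operatorname{Ad}(K)$-equivariant $\alpha$; the converse existence statement is not needed.
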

	There is a natural $G$-invariant connection defined by Nomizu \cite[Theorem 10.2, (10.3)]{KN}.
	\begin{definition}
		Set $C_K^\infty(G,\mathfrak{m}):=\{f\in C^\infty(G,\mathfrak{m})\mid f(gk)=Ad(k^{-1})f(g)\ \forall\ k\in K\}$. For any $X\in\mathscr{X}(G/K)$, define $f_X\in C_K^\infty(G,\mathfrak{m})$ by $f_X(g)=(\pi_{K*}|_{\mathfrak{m}})^{-1}\circ\tau_{g^{-1}*}(X(gK))$. Let $\tilde X$ be the horizontal lift of $X$. Define a $G$-invariant connection $D^0$ on $G/K$ by
		\begin{align*}
			D^0_XY(gK)=\pi_{K*}\circ L_{g*}(\tilde X_g(f_Y)).
		\end{align*}
	\end{definition}
	\begin{lemma}\label{[X^*,Y^*]}
		For any $X,Y\in\mathfrak{m}$, denote $X'=\pi_{K*}X, Y'=\pi_{K*}Y$ and
		\begin{align*}
			X^*(gK)=\tau_{\sigma(gK)*}X',\quad Y^*(gK)=\tau_{\sigma(gK)*}Y'\in\mathscr{X}(U),
		\end{align*}
		where $\sigma:U\to G$ is defined in Lemma \ref{sigma}. It is easy to verify that
		\begin{align*}
			\tilde X(g)=L_{g*}\circ Ad(g^{-1}\sigma(gK))X,\quad \tilde Y(g)=L_{g*}\circ Ad(g^{-1}\sigma(gK))Y
		\end{align*}
		are the horizontal lifts of $X^*$ and $Y^*$, respectively. Then we have
		\begin{align*}
			D^0_{X^*}Y^*(K)=D^0_{Y^*}X^*(K)=0.
		\end{align*}
	\end{lemma}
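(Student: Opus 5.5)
The claim is that $D^0_{X^*}Y^*(K)=0$, and by symmetry it suffices to treat $D^0_{X^*}Y^*(K)$. The plan is to unwind Nomizu's definition of $D^0$ at the point $g=e$. The horizontal lift of $X^*$ at $e$ is $\tilde X(e)=L_{e*}\operatorname{Ad}(e^{-1}\sigma(K))X=X$, since $\sigma(K)=e$. So we must show that the derivative $X(f_{Y^*})$ at $e$ vanishes, where $f_{Y^*}\in C_K^\infty(G,\mathfrak m)$ and $X$ acts as a left-invariant vector field, i.e. along the curve $t\mapsto \exp(tX)$. Once this is done, $D^0_{X^*}Y^*(K)=\pi_{K*}(0)=0$.

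The first step is to compute $f_{Y^*}$ along the curve $g_t=\exp(tX)$, for $t$ small so that $g_tK\in U$. By Lemma \ref{sigma}, $\sigma(\exp(tX)K)=\exp(tX)$, because $tX\in\mathfrak m$ lies in the neighborhood $U'$ for small $t$. Hence $g_t^{-1}\sigma(g_tK)=e$. This gives
\begin{align*}
f_{Y^*}(g_t)=(\pi_{K*}|_{\mathfrak m})^{-1}\tau_{g_t^{-1}*}\big(Y^*(g_tK)\big)=(\pi_{K*}|_{\mathfrak m})^{-1}\tau_{g_t^{-1}*}\tau_{\sigma(g_tK)*}Y'=(\pi_{K*}|_{\mathfrak m})^{-1}Y'=Y.
\end{align*}
Thus $f_{Y^*}$ is constant along $g_t$, and $\frac{d}{dt}\big|_{t=0}f_{Y^*}(g_t)=0$.

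The only point needing care is to justify that the general formula $\tilde X(g)=L_{g*}\operatorname{Ad}(g^{-1}\sigma(gK))X$ is a horizontal lift. This holds even for $g\notin\sigma(U)$, since $g^{-1}\sigma(gK)\in K$ and $\operatorname{Ad}(K)$ preserves $\mathfrak m$. The actual derivative, however, is only taken at $e$, where the curve $\exp(tX)$ stays inside $\sigma(U)$. The identity $f_{Y^*}(g_t)=Y$ uses only that $f_{Y^*}$ is evaluated at points of $\sigma(U)$, together with $\tau_{a*}\tau_{b*}=\tau_{ab*}$. Exchanging the roles of $X$ and $Y$ gives $D^0_{Y^*}X^*(K)=0$. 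No real obstacle is expected beyond this bookkeeping of the identification $\sigma(\exp(tX)K)=\exp(tX)$.
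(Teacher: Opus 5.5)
Your proof is correct and takes the same route as the paper. Both evaluate Nomizu's formula at $e$ and use $\sigma(\exp(tX)K)=\exp(tX)$ for small $t$ to get $f_{Y^*}(\exp(tX))=Y$, so the derivative vanishes. The paper simply states this through the general identity $f_{Y^*}(g)=\operatorname{Ad}(g^{-1}\sigma(gK))Y$, which your direct computation along the curve specializes.
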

	\begin{proof}
		By the definition of $D^0$, we have $f_{Y^*}(g)=Ad(g^{-1}\sigma(gK))Y$. Thus
		\begin{align*}
			D^0_{X^*}Y^*(K)=\frac{d}{dt}|_{t=0}\pi_{K*}f_{Y^*}(\exp(tX))=\frac{d}{dt}|_{t=0}\pi_{K*}Y=0,
		\end{align*}
		where we use $\exp(-tX)\sigma(\exp(tX)K)=e$ for small $t$.
	\end{proof}
	We now give the formula for the $G$-invariant Levi-Civita connection.
	\begin{proposition}\label{D_X*Y*}
		Let $D$ be the Levi-Civita connection on $G/K$. For any $X,Y\in\mathfrak{m}$, let $X^*,Y^*\in\mathscr{X}(U)$ be the local vector fields defined in Lemma \ref{[X^*,Y^*]}. Then we have
		\begin{equation}
			\begin{split}
				D_{X^*}Y^*(K)=\left\{
				\begin{array}{ll}
					\frac12[X,Y]_{\mathfrak{m}},&X,Y\in\mathfrak{m}_1,\\
					0,&X\in\mathfrak{m}_1,Y\in\mathfrak{m}_2,\\
					{[}X,Y]_{\mathfrak{m}_2},&X\in\mathfrak{m}_2,Y\in\mathfrak{m}_1,\\
					\frac12[X,Y]_{\mathfrak{m}},&X,Y\in\mathfrak{m}_2.				
				\end{array}
				\right.
			\end{split}
		\end{equation}
		for any $X,Y\in\mathfrak{m}_1\cup\mathfrak{m}_2$. Moreover, we have
		\begin{align}
			[X^*,Y^*](K)=D_{X^*}Y^*(K)-D_{Y^*}X^*(K)=[X,Y]_{\mathfrak{m}}.
		\end{align}
	\end{proposition}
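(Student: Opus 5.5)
The plan is to reduce the Levi-Civita connection at the base point to an algebraic bilinear map on $\mathfrak m$ and determine that map by the Koszul formula. By Proposition~\ref{D=D^0+alpha} applied with $D'=D$ and $D^0$ the Nomizu connection, there is a unique $\operatorname{Ad}(K)$-equivariant bilinear map $\alpha:\mathfrak m\times\mathfrak m\to\mathfrak m$ with $D_{\hat X}\hat Y(K)=D^0_{\hat X}\hat Y(K)+\pi_{K*}\alpha(X,Y)$. By Lemma~\ref{[X^*,Y^*]}, $D^0_{X^*}Y^*(K)=0$ for the local fields $X^*,Y^*$, so
\begin{align*}
D_{X^*}Y^*(K)=\alpha(X,Y),
\end{align*}
identifying $T_KG/K$ with $\mathfrak m$ via $\pi_{K*}$. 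It therefore suffices to compute $\alpha$.

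\textbf{Step 1: the bracket.} I first compute $[X^*,Y^*](K)$. The torsion of the $G$-invariant connection $D^0$ is $G$-invariant, so it is determined by its value at $K$. Nomizu's description of the canonical connection \cite[Theorem 10.2]{KN} gives $T^0(X,Y)(K)=-[X,Y]_{\mathfrak m}$. Combining this with $D^0_{X^*}Y^*(K)=D^0_{Y^*}X^*(K)=0$ yields
\begin{align*}
[X^*,Y^*](K)=[X,Y]_{\mathfrak m}.
\end{align*}
As a check, I would also verify this directly from $\sigma(\exp(tZ)K)=\exp(tZ)$. One expands the flows of $X^*$ and $Y^*$ to second order in $G$ and projects the commutator $\exp(tX)\exp(sY)\exp(-tX)\exp(-sY)$ to $G/K$. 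Torsion-freeness of $D$ then gives
\begin{align*}
\alpha(X,Y)-\alpha(Y,X)=[X,Y]_{\mathfrak m}.
\end{align*}
This is the "moreover" statement of the proposition.

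\textbf{Step 2: metric compatibility.} Since each $\tau_x$ is an isometry, $\langle X^*,Y^*\rangle(gK)=\langle X',Y'\rangle$ is constant on $U$. Differentiating along $Z^*$ at $K$ and using that $D$ is metric gives
\begin{align*}
\langle\alpha(Z,X),Y\rangle+\langle X,\alpha(Z,Y)\rangle=0.
\end{align*}
The usual Koszul manipulation, using skew-symmetry in the last two slots together with the torsion identity from Step 1, then yields
\begin{align*}
\langle\alpha(X,Y),Z\rangle=\tfrac12\bigl(\langle[X,Y]_{\mathfrak m},Z\rangle-\langle[Y,Z]_{\mathfrak m},X\rangle+\langle[Z,X]_{\mathfrak m},Y\rangle\bigr)
\end{align*}
for all $X,Y,Z\in\mathfrak m$.

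\textbf{Step 3: case analysis.} I evaluate this formula for $X,Y,Z\in\mathfrak m_1\cup\mathfrak m_2$, using \eqref{eq:bracket-identity} to move all brackets into the form $\langle[X,Y]_{\mathfrak m},\cdot\rangle$ or to see that they vanish.

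For $X,Y\in\mathfrak m_1$ and $Z\in\mathfrak m_1$, all three terms equal $\langle[X,Y]_{\mathfrak m},Z\rangle$, so the bracket combination is $1-1+1$ and $\langle\alpha(X,Y),Z\rangle=\tfrac12\langle[X,Y]_{\mathfrak m},Z\rangle$. For $Z\in\mathfrak m_2$ every term vanishes. Together these give $\alpha=\tfrac12[X,Y]_{\mathfrak m}$.

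For $X\in\mathfrak m_1$, $Y\in\mathfrak m_2$:
\begin{itemize}
\item with $Z\in\mathfrak m_2$, the factors $\tfrac12$ and $2$ in \eqref{eq:bracket-identity} make the terms cancel;
\item with $Z\in\mathfrak m_1$, all terms vanish since $[\mathfrak m_1,\mathfrak m_2]_{\mathfrak m}\subset\mathfrak m_2$;
\end{itemize}
so $\alpha=0$.

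For $X\in\mathfrak m_2$, $Y\in\mathfrak m_1$, the torsion identity gives $\alpha(X,Y)=\alpha(Y,X)+[X,Y]_{\mathfrak m}=[X,Y]_{\mathfrak m}$. This lies in $\mathfrak m_2$, giving $[X,Y]_{\mathfrak m_2}$.

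For $X,Y\in\mathfrak m_2$, the $\mathfrak m_2$-component vanishes, and the $\mathfrak m_1$-component works out to $\tfrac12[X,Y]_{\mathfrak m}$ after applying the factor-$2$ rules.

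\textbf{Main obstacle.} I expect the main difficulty to be Step 1, namely justifying $[X^*,Y^*](K)=[X,Y]_{\mathfrak m}$ with the correct sign and no stray $\mathfrak k$-contribution. Closely tied to it is tracking the non-normal factors $\tfrac12$ and $2$ in the case analysis of Step 3, since the mixed $\mathfrak m_2$ cases depend delicately on them.
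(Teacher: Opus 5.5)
Your proposal is correct and follows essentially the same route as the paper. You write $D_{X^*}Y^*(K)=D^0_{X^*}Y^*(K)+\alpha(X,Y)=\alpha(X,Y)$, pin down $\alpha$ by the skew-symmetry $\langle\alpha(X,Y),Z\rangle+\langle\alpha(X,Z),Y\rangle=0$ together with the torsion identity $\alpha(X,Y)-\alpha(Y,X)=[X,Y]_{\mathfrak m}$, and resolve the cases with \eqref{eq:bracket-identity}; your case computations, including the factors $\tfrac12$ and $2$, check out. The one difference is in how the two constraints are obtained: the paper reads them off from the homogeneous Koszul formula \cite[Theorem 13.1]{KN} and deduces $[X^*,Y^*](K)=[X,Y]_{\mathfrak m}$ afterwards, whereas you derive them directly from the constancy of $\langle X^*,Y^*\rangle$ and the torsion $-[X,Y]_{\mathfrak m}$ of Nomizu's canonical connection, which makes your argument slightly more self-contained.
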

	\begin{proof}
		Applying Proposition \ref{D=D^0+alpha}, we have
		\begin{align*}
			D_{X^*}Y^*(K)=D^0_{X^*}Y^*(K)+\alpha(X,Y)
		\end{align*}
		The homogeneous Koszul formula \cite[Theorem 13.1]{KN}, together with \eqref{eq:bracket-identity}, gives
		\begin{align*}
			\langle\alpha(X,Y),Z\rangle+\langle\alpha(X,Z),Y\rangle&=0,\\
			\alpha(X,Y)-\alpha(Y,X)&=[X,Y]_{\mathfrak{m}}
		\end{align*}
		for any $X,Y,Z\in\mathfrak{m}$, and thus
		\begin{align*}
			\alpha(X,Y)=\left\{
			\begin{array}{ll}
				\frac12[X,Y]_{\mathfrak{m}},&X,Y\in\mathfrak{m}_1,\\
				0,&X\in\mathfrak{m}_1,Y\in\mathfrak{m}_2,\\
				{[}X,Y]_{\mathfrak{m}_2},&X\in\mathfrak{m}_2,Y\in\mathfrak{m}_1,\\
				\frac12[X,Y]_{\mathfrak{m}},&X,Y\in\mathfrak{m}_2.				
			\end{array}
			\right.
		\end{align*}
		Lemma \ref{[X^*,Y^*]} now completes the proof.
	\end{proof}
	We conclude this section by describing a family of local geodesics on $G/K$.
	\begin{lemma}\label{geodesic}
		Let $X\in\mathfrak{m}_1\cup\mathfrak{m}_2$ and set $X'=\pi_{K*}X$. Then
		\begin{align*}
			\exp(tX)K=exp_K(tX'),
		\end{align*}
		that is, $\gamma(t)=\exp(tX)K$ is a geodesic for all sufficiently small $t$.
	\end{lemma}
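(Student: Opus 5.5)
The plan is to show that $\gamma(t)=\exp(tX)K$ satisfies the geodesic equation $D_{\dot\gamma}\dot\gamma=0$, using the $G$-invariance of the metric and Proposition~\ref{D_X*Y*}. By uniqueness of geodesics with given initial data, this identifies $\gamma$ with $exp_K(tX')$, since $\gamma(0)=K$ and $\dot\gamma(0)=\pi_{K*}X=X'$. The proof has three steps.

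First, I translate everything back to the base point. Since $\gamma(t+s)=\tau_{\exp(tX)}\gamma(s)$, we have $\dot\gamma(t)=\tau_{\exp(tX)*}\dot\gamma(0)$. The left translation $\tau_{\exp(tX)}$ is an isometry of the round metric, which is $G$-invariant, so it preserves $D$. Hence
\[
D_{\dot\gamma}\dot\gamma(t)=\tau_{\exp(tX)*}\bigl(D_{\dot\gamma}\dot\gamma(0)\bigr),
\]
because the translated curve $s\mapsto\tau_{\exp(tX)}\gamma(s)$ is again $\gamma(t+\cdot)$. It therefore suffices to check that $D_{\dot\gamma}\dot\gamma=0$ at $t=0$.

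Second, I compare $\gamma$ with the local vector field $X^*$ of Lemma~\ref{[X^*,Y^*]}. For small $t$, Lemma~\ref{sigma} gives $\sigma(\exp(tX)K)=\exp(tX)$. Consequently
\[
X^*(\gamma(t))=\tau_{\exp(tX)*}X'=\frac{d}{ds}\Big|_{s=0}\exp(tX)\exp(sX)K=\dot\gamma(t).
\]
So $\gamma$ is an integral curve of $X^*$ near $t=0$, and therefore
\[
D_{\dot\gamma}\dot\gamma(0)=D_{X^*}X^*(K).
\]

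Third, I evaluate $D_{X^*}X^*(K)$ with Proposition~\ref{D_X*Y*}, taking $Y=X$. If $X\in\mathfrak m_1$ or $X\in\mathfrak m_2$, the formula gives $\frac12[X,X]_{\mathfrak m}=0$. This is exactly why the hypothesis $X\in\mathfrak m_1\cup\mathfrak m_2$ is needed. For a general $X=X^{(1)}+X^{(2)}$, the mixed terms would contribute $[X^{(2)},X^{(1)}]_{\mathfrak m_2}\neq0$, and $\gamma$ need not be a geodesic. Thus $D_{\dot\gamma}\dot\gamma\equiv0$ for small $t$, and by the first step for all $t$ in the range considered.

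The main point requiring care is the second step. I must ensure that $\exp(tX)\in\sigma(U)$ for small $t$, so that the identity $\sigma(\exp(tX)K)=\exp(tX)$ applies; this holds because $tX\in U'$ for small $t$. I must also check that the covariant derivative along the curve agrees with $D_{X^*}X^*$. The latter is standard: $\dot\gamma=X^*\circ\gamma$ holds on an interval, so the derivatives coincide.
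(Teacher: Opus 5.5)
Your proof is correct. It rests on the same key fact as the paper's, namely $\alpha(X,X)=0$ for $X\in\mathfrak m_1\cup\mathfrak m_2$, but it handles the points with $t\neq 0$ differently.

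\begin{itemize}
\item \emph{The paper's route.} It never returns to the base point. It applies the $\sigma$-translated form of Proposition~\ref{D=D^0+alpha} at each point $\gamma(t)$ with $t$ small, which gives $D_{\dot\gamma}\dot\gamma(t)=D^0_{\dot\gamma}\dot\gamma(t)$. It then checks directly from the definition of $D^0$, using the horizontal lift $\tilde X$ and the function $f_{X^*}$, that $D^0_{\dot\gamma}\dot\gamma(t)=0$.
\item \emph{Your route.} You use $\gamma(t+\cdot)=\tau_{\exp(tX)}\circ\gamma$, with $\tau_{\exp(tX)}$ an isometry, to reduce to $t=0$. There you identify $\gamma$ as an integral curve of $X^*$ and quote Proposition~\ref{D_X*Y*} with $Y=X$.
\end{itemize}

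Your route is shorter and reuses the formula already established at $K$ instead of redoing the $D^0$ computation. It also gives the geodesic property for all $t\in\mathbb{R}$, since only the integral-curve identification at $t=0$ needs $t$ small. The paper's route stays inside the local machinery of $\sigma$, horizontal lifts and $f_{X^*}$ that it reuses in the next computation of $D_{X^*}D_{X^*}\xi_j^*$, but it is confined to small $t$.

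Your side remark is also consistent with the paper's formula for $\alpha$: for mixed $X$ the acceleration at $K$ is $[X^{(2)},X^{(1)}]_{\mathfrak m_2}$, which is generally nonzero.
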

	\begin{proof}
		By direct calculation, we have
		\begin{align*}
			\dot\gamma(t)=\frac{d}{ds}\gamma(t+s)|_{s=0}=\tau_{\exp(tX)*}X'=\tau_{\sigma(\gamma(t))*}X'.
		\end{align*}
		Applying Proposition \ref{D=D^0+alpha} and noting that $\alpha(X,X)=0$ for $X\in\mathfrak{m}_1\cup\mathfrak{m}_2$, we obtain 
		\begin{align*}
			D_{\dot\gamma(t)}\dot\gamma(t)=D^0_{\dot\gamma(t)}\dot\gamma(t)
		\end{align*}
		for all sufficiently small $t$. It therefore remains to prove that $D^0_{\dot\gamma(t)}\dot\gamma(t)=0$. Let $X^*(gK)=\tau_{\sigma(gK)*}X'$ and let $\tilde X$ be its horizontal lift. Along $\gamma$, we have $X^*(\gamma(t))=\dot\gamma(t)$ and $\tilde X(\sigma(\gamma(t)))=L_{\sigma(\gamma(t))*}X$. By the definition of $f_{X^*}$, we havex
		\begin{align*}
			f_{X^*}(g)=(\pi_{K*}|_{\mathfrak{m}})^{-1}\circ\tau_{g^{-1}\sigma(gK)*}X'
		\end{align*}
		and hence
		\begin{align*}
			\tilde X_{\sigma(\gamma(t))}(f_{X^*})=\frac{d}{ds}|_{s=0}f_{X^*}(\sigma(\gamma(t))\exp(sX))=\frac{d}{ds}    |_{s=0}X=0,		
		\end{align*}
		where we use $\sigma(\exp(sX)K)=\exp(sX)$. Therefore, we have
		\begin{align*}
			D^0_{\dot\gamma(t)}\dot\gamma(t)=\pi_{K*}\circ L_{\sigma(\gamma(t))*}\tilde X_{\sigma(\gamma(t))}(f_{X^*})=0.
		\end{align*}
	\end{proof}
	\subsection{The pullback BPST instanton and its curvature on $(C^\infty(G)\otimes\mathfrak{m}^*\otimes\mathfrak{k})_K$}
	In this section, we give the definition of the pullback of the adjoint BPST instanton under the Hopf fibration.
	\begin{definition}\label{instanton}\cite{AW}
		Let $s\in\Gamma(E)$ be written as $s(gK)=[g,Y(g)]$, let $V\in T_{gK}G/K$, and set $X=(\pi_{K*}|_{\mathfrak{m}})^{-1}\circ\tau_{g^{-1}*}V\in\mathfrak{m}$. The pullback BPST connection $\nabla$ is given by
		\begin{align}
			\nabla_Vs(gK)=[g,\frac{d}{dt}|_{t=0}Y(g\exp(tX))].
		\end{align}
		It is easy to verify that for any $B(gK)=[g,B'(g)]\in\Gamma(\mathfrak{g}_E)$, the induced connection $\nabla$ on $\mathfrak{g}_E$ is given by
		\begin{align}
			\nabla_VB(gK)=[g,\frac{d}{dt}|_{t=0}B'(g\exp(tX))].
		\end{align}
	\end{definition}
	To compute the curvature of $\nabla$, we first record the following identification.
	\begin{lemma}
		There is an isomorphism
		\begin{align*}
			\Phi':\Gamma(E)\to C_K^\infty(G,\mathfrak{k})=\{f\in C^\infty(G,\mathfrak{k})\mid f(gk)=Ad(k^{-1})f(g)\ \forall\ k\in K\}
		\end{align*}
		such that $\Phi'(s)(g)=X(g)$ for any $s(gK)=[g,X(g)]\in\Gamma(E)$.
	\end{lemma}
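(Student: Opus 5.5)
The plan is to define $\Phi'$ directly and write down its inverse, exactly as in Lemma \ref{Phi}. The bundle is $E=G\times_K\mathfrak{k}$ with $[gk,Y]=[g,\operatorname{Ad}(k)Y]$. Every section $s\in\Gamma(E)$ can be written as $s(gK)=[g,X(g)]$ for a unique map $X:G\to\mathfrak{k}$. Indeed, for each $g\in G$ the map $Y\mapsto[g,Y]$ is a linear isomorphism $\mathfrak{k}\to E_{gK}$, since $K$ acts freely on $G$ from the right. So we set $X(g)$ to be the preimage of $s(gK)$ under this isomorphism, and define $\Phi'(s)=X$.

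First I will check the equivariance condition. Since $s(gkK)=s(gK)$, we have
\begin{align*}
[gk,X(gk)]=[g,X(g)].
\end{align*}
The left side equals $[g,\operatorname{Ad}(k)X(gk)]$. Uniqueness in the fiber then gives $X(gk)=\operatorname{Ad}(k^{-1})X(g)$, so $\Phi'(s)\in C_K^\infty(G,\mathfrak{k})$. For smoothness, take a local section $\sigma:U\to G$ as in Lemma \ref{sigma}. On $\pi_K^{-1}(U)$ we can write $g=\sigma(gK)k(g)$ with $k(g)=\sigma(gK)^{-1}g\in K$ depending smoothly on $g$. The map $X(\sigma(\cdot))$ is the local trivialization of $s$, hence smooth, and $X(g)=\operatorname{Ad}(k(g)^{-1})X(\sigma(gK))$. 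By $G$-translation these charts cover $G$, so $X$ is smooth.

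Conversely, given $f\in C_K^\infty(G,\mathfrak{k})$, define $s_f(gK)=[g,f(g)]$. This is well defined because
\begin{align*}
[gk,f(gk)]=[gk,\operatorname{Ad}(k^{-1})f(g)]=[g,f(g)].
\end{align*}
It is smooth because locally $s_f=[\sigma,f\circ\sigma]$. The maps $f\mapsto s_f$ and $\Phi'$ are mutually inverse and linear, so $\Phi'$ is an isomorphism.

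There is no real obstacle here. The only point needing care is the smoothness of $\Phi'(s)$, and the local section $\sigma$ handles it. The argument is the same as the one used for Lemma \ref{Phi}.
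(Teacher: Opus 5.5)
Your proposal is correct and matches the paper's approach: the paper states this lemma without proof, and its proof of the parallel Lemma~\ref{Phi} consists of your two steps, namely checking well-definedness through the relation $[gk,Y]=[g,\operatorname{Ad}(k)Y]$ and writing down the inverse $f\mapsto[g,f(g)]$. Your smoothness argument via the local section $\sigma$ of Lemma~\ref{sigma} fills in a detail the paper leaves implicit.
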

	\begin{proposition}
		The curvature of $\nabla$ is given by
		\begin{align}
			F(\tau_{g*}X',\tau_{g*}Y')_{gK}=[g,-[X,Y]_{\mathfrak{k}}]
		\end{align}
		for any $X,Y\in\mathfrak{m}$ and $X'=\pi_{K*}X$, $Y'=\pi_{K*}Y$, that is, for any $s(gK)=[g,Y(g)]\in\Gamma(E)$ and $B(gK)=[g,B'(g)]\in\Gamma(\mathfrak{g}_E)$, we have
		\begin{align*}
			&F(\tau_{g*}X',\tau_{g*}Y')s(gK)=[g,-[[X,Y]_{\mathfrak{k}},Y(g)]],\\
			&F(\tau_{g*}X',\tau_{g*}Y')B(gK)=[g,-[[X,Y]_{\mathfrak{k}},B'(g)]].
		\end{align*}
	\end{proposition}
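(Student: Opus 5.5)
We compute the curvature at the base point $K$ from the defining formula
\begin{align*}
F(V,W)s=\nabla_V\nabla_Ws-\nabla_W\nabla_Vs-\nabla_{[V,W]}s,
\end{align*}
using the local vector fields $X^*,Y^*$ of Lemma \ref{[X^*,Y^*]}. We then transport the result to an arbitrary point $gK$ by $G$-invariance.

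\textbf{Step 1: express $\nabla_{X^*}s$ near $K$.} Let $s(hK)=[h,f(h)]$ with $f=\Phi'(s)\in C^\infty_K(G,\mathfrak{k})$, and write $X^L$ for the left-invariant vector field on $G$ generated by $X\in\mathfrak g$. Take $hK\in U$ and use the representative $g=\sigma(hK)$. Since $X^*(hK)=\tau_{\sigma(hK)*}X'$, the element of $\mathfrak m$ in Definition \ref{instanton} is exactly $X$. Hence
\begin{align*}
\nabla_{X^*}s(hK)=[\sigma(hK),(X^Lf)(\sigma(hK))].
\end{align*}
The function $X^Lf$ need not be $K$-equivariant. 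This does not matter, because we only ever evaluate it along the section $\sigma$, so the bracket above is an honest representative.

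\textbf{Step 2: differentiate a second time.} Apply $\nabla_{Y^*}$ at $K$ along the curve $\exp(tY)K$. By Lemma \ref{sigma}, $\sigma(\exp(tY)K)=\exp(tY)$, so the representative along this curve is $\exp(tY)$. We get
\begin{align*}
\nabla_{Y^*}\nabla_{X^*}s(K)=\Big[e,\tfrac{d}{dt}\big|_{t=0}(X^Lf)(\exp(tY))\Big]=[e,(Y^LX^Lf)(e)].
\end{align*}
Here we also use the fact that the connection formula is independent of the chosen representative. This follows from the $K$-equivariance built into Definition \ref{instanton}.

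\textbf{Step 3: assemble the commutator.} The commutator of the two second derivatives is
\begin{align*}
(X^LY^L-Y^LX^L)f(e)=[X,Y]^Lf(e).
\end{align*}
By Proposition \ref{D_X*Y*}, $[X^*,Y^*](K)=\pi_{K*}[X,Y]_{\mathfrak m}$. Therefore $\nabla_{[X^*,Y^*]}s(K)=[e,[X,Y]_{\mathfrak m}^Lf(e)]$, again by Step 1 with $\sigma(K)=e$. Subtracting, only the $\mathfrak k$-component survives:
\begin{align*}
F(X',Y')s(K)=[e,[X,Y]_{\mathfrak k}^Lf(e)].
\end{align*}

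\textbf{Step 4: evaluate the $\mathfrak k$-derivative.} For $Z\in\mathfrak k$, the equivariance $f(\exp(tZ))=\operatorname{Ad}(\exp(-tZ))f(e)$ gives
\begin{align*}
Z^Lf(e)=-[Z,f(e)].
\end{align*}
Taking $Z=[X,Y]_{\mathfrak k}$ yields $F(X',Y')s(K)=[e,-[[X,Y]_{\mathfrak k},f(e)]]$.

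\textbf{Step 5: pass to $gK$.} The connection is $G$-invariant: from Definition \ref{instanton}, $\nabla$ commutes with the action $\tau_x$ on sections. Hence $F$ is $G$-equivariant, and evaluating $\tau_{g^{-1}}$-translates gives the formula at $gK$ with $\tau_{g*}X',\tau_{g*}Y'$. The statement for $B\in\Gamma(\mathfrak g_E)$ follows identically, or via Lemma \ref{lem: isomorphic g_E}, since $\operatorname{ad}$ is a Lie algebra isomorphism.

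\textbf{Main obstacle.} The delicate point is Step 2, together with the sign conventions. One must check that differentiating $\nabla_{X^*}s$, written through the section $\sigma$, produces exactly $Y^LX^Lf(e)$ with no extra terms. This is where $\sigma(\exp(tY)K)=\exp(tY)$ is essential. One must also use the identity $[X^*,Y^*](K)=[X,Y]_{\mathfrak m}$ with the correct sign, so that the $\mathfrak m$-parts cancel exactly.
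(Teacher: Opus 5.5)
Your proposal is correct and follows essentially the same route as the paper. Both arguments compute the curvature at $K$ through the section $\sigma$ (using $\sigma(\exp(tY)K)=\exp(tY)$), identify the second covariant derivatives with second derivatives of $f$ at $e$, and subtract the $[X,Y]_{\mathfrak{m}}$ term via Proposition \ref{D_X*Y*}. Both then evaluate the surviving $\mathfrak{k}$-derivative by $K$-equivariance and transport the result to $gK$ by $G$-invariance.

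The only cosmetic difference is in the bracket step. You work with left-invariant fields $X^L,Y^L$, so $[X^L,Y^L]=[X,Y]^L$ is automatic. The paper instead uses the horizontal lifts $\tilde X,\tilde Y$ and asserts $[\tilde X,\tilde Y](e)=[X,Y]$ "by a similar argument". The two give the same quantity at $e$, because $\tilde Y f$ and $Y^L f$ agree along $\exp(rX)$.
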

	\begin{proof}
		Since $F$ is $G$-invariant, we only need to prove the formula at $K$. For any $X,Y\in\mathfrak{m}_1\cup\mathfrak{m}_2$, let
		\begin{align*}
			X^*(gK)=\tau_{\sigma(gK)*}\pi_{K*}X,\quad Y^*(gK)=\tau_{\sigma(gK)*}\pi_{K*}Y
		\end{align*}
		be vector fields on $U$ and
		\begin{align*}
			\tilde X(g)=L_{g*}\circ Ad(g^{-1}\sigma(gK))X,\quad \tilde Y(g)=L_{g*}\circ Ad(g^{-1}\sigma(gK))Y
		\end{align*}
		be the horizontal lifts on $\pi_K^{-1}(U)$. We also let $s(gK)=[g,f(g)]\in\Gamma(E)$. For all sufficiently small $r\in\mathbb{R}$, we have
		\begin{align*}
			\Phi'(\nabla_{Y^*}s)(\exp(rX))=\frac{d}{dt}|_{t=0}f(\exp(rX)\exp(tY))=\tilde Y(f)(\exp(rX))
		\end{align*}
		and thus
		\begin{align*}
			\Phi'(\nabla_{X^*}\nabla_{Y^*}s)(e)=\frac{d}{dr}|_{r=0}\tilde Y(f)(\exp(rX))=\tilde X\tilde Y(f)(e).
		\end{align*}
		Similarly, we have
		\begin{align*}
			\Phi'(\nabla_{Y^*}\nabla_{X^*}s)(e)=\tilde Y\tilde X(f)(e).
		\end{align*}
		Using Proposition \ref{D_X*Y*}, we have
		\begin{align*}
			\Phi'(\nabla_{[X^*,Y^*]}s)(e)=\frac{d}{dt}|_{t=0}f(\exp(t[X,Y]_{\mathfrak{m}}))=[X,Y]_{\mathfrak{m}}(f)(e).
		\end{align*}
		A similar argument shows that $[\tilde X,\tilde Y](e)=[X,Y]$. Hence, we obtain
		\begin{align*}
			\Phi'(F(X^*,Y^*)s)(e)&=\Phi'(\nabla_{X^*}\nabla_{Y^*}s-\nabla_{Y^*}\nabla_{X^*}s-\nabla_{[X^*,Y^*]}s)(e)\\
			&=\tilde X\tilde Y(f)(e)-\tilde Y\tilde X(f)(e)-[X,Y]_{\mathfrak{m}}(f)(e)\\
			&=[\tilde X,\tilde Y](f)(e)-[X,Y]_{\mathfrak{m}}(f)(e)\\
			&=[X,Y]_{\mathfrak{k}}(f).
		\end{align*}
		Finally, using $f\in C_K^\infty(G,\mathfrak{k})$, we have
		\begin{align*}
			[X,Y]_{\mathfrak{k}}(f)=\frac{d}{dt}|_{t=0}f(\exp(t[X,Y]_{\mathfrak{k}}))=\frac{d}{dt}|_{t=0}Ad(\exp(-t[X,Y]_{\mathfrak{k}}))f(e)=-ad([X,Y]_{\mathfrak{k}})f(e).
		\end{align*}
	\end{proof}
	\subsection{The Jacobi operator on $(C^\infty(G)\otimes\mathfrak{m}^*\otimes\mathfrak{k})_K$}
	In this section, we express the Jacobi operator and the operator corresponding to $\delta^\triangledown$ on $(C^\infty(G)\otimes\mathfrak{m}^*\otimes\mathfrak{k})_K$. Since the preceding identifications and operators are $G$-equivariant, it suffices to perform the computations at $K\in G/K$. Let $\{X_i\mid1\leq i\leq7\}$ be the basis of $\mathfrak m$ defined in \eqref{X_i}. For each $i$, set $X_i'=\pi_{K*}X_i$ and
	\begin{align*}
		X_i^*(gK)=\tau_{\sigma(gK)*}X_i'.
	\end{align*}
	Define $J:\Omega^1(\mathfrak g_E)\to\Omega^1(\mathfrak g_E)$ by $J(B)=\nabla^*\nabla B+2\mathfrak R^\triangledown(B)+B\circ Ric$. By the Bochner-Weitzenb\"ock formula \eqref{Bochner}, $\mathscr S^\nabla(B)=J(B)$ for every $B\in{\rm ker\,}(\delta^\triangledown)$.
	The rough Laplacian is given by
	\begin{align*}
		\nabla^*\nabla B=-\sum_{i=1}^3(\nabla_{X_i^*}\nabla_{X_i^*}B-\nabla_{D_{X_i^*}X_i^*}B)-2\sum_{i=4}^7(\nabla_{X_i^*}\nabla_{X_i^*}B-\nabla_{D_{X_i^*}X_i^*}B).
	\end{align*}
	We first calculate the derivative term $\nabla^*\nabla B$. Proposition \ref{D_X*Y*} gives
	\begin{align*}
		D_{X_i^*}X_i^*(K)=0
	\end{align*}
	for every $i$. Therefore, it remains to calculate $\nabla_{X_i^*}\nabla_{X_i^*}B$ at $K$.
	\begin{lemma}\label{Dxi D^2xi}
		For each $i$, define
		\begin{align*}
			\xi_i^*(gK)=\tau_{\sigma(gK)^{-1}}^*\xi_i.
		\end{align*}
		Then $\{\xi_i^*\}_{i=1}^7$ is the local coframe dual to
		$\{X_i^*\}_{i=1}^7$.
		For any $X\in\mathfrak m_1\cup\mathfrak m_2$, we have
		\begin{align*}
			D_{X^*}\xi_j^*(K)=\left\{
			\begin{array}{ll}
				-\frac12\sum_{i=1}^3\langle X_j,[X,X_i]\rangle_{alg}\xi_i,&X\in\mathfrak{m}_1,\\
				-\sum_{i=1}^3\langle X_j,[X,X_i]\rangle_{alg}\xi_i-\frac12\sum_{i=4}^7\langle X_j,[X,X_i]\rangle_{alg}\xi_i,&X\in\mathfrak{m}_2.
			\end{array}
			\right.,\\
		\end{align*}
		and
		\begin{align*}
			D_{X^*}D_{X^*}\xi_j^*(K)=\left\{
			\begin{array}{ll}
				\frac14\sum_{i=1}^3\langle [X,[X,X_i]_{\mathfrak{m}}],X_j\rangle_{alg}\xi_i,&X\in\mathfrak{m}_1,\\
				\frac12\sum_{i=1}^7\langle [X,[X,X_i]_{\mathfrak{m}}],X_j\rangle_{alg}\xi_i,&X\in\mathfrak{m}_2.
			\end{array}
			\right.
		\end{align*}
	\end{lemma}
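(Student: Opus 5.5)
The plan is to compute everything along the curves $\exp(tX)K$, using $G$-invariance so that the connection coefficients of the coframe $\{\xi_i^*\}$ become constant along these curves; the second covariant derivative then reduces to squaring a constant matrix.

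\textbf{Step 1: duality and the first derivative.} First, $\{\xi_i^*\}$ is dual to $\{X_i^*\}$. Both are obtained from $\xi_i$ and $X_i'$ by the same translation $\tau_{\sigma(gK)}$, so
$$\xi_i^*(X_j^*)(gK)=\xi_i\bigl((\pi_{K*}|_{\mathfrak m})^{-1}X_j'\bigr)=\xi_i(X_j)=\langle X_i,X_j\rangle_{alg}=\delta_{ij},$$
since the $X_i$ are $\langle\ ,\ \rangle_{alg}$-orthonormal. In particular $\xi_j^*(X_i^*)$ is constant on $U$. Hence, for $X\in\mathfrak m_1\cup\mathfrak m_2$,
$$(D_{X^*}\xi_j^*)(X_i^*)=X^*\bigl(\xi_j^*(X_i^*)\bigr)-\xi_j^*(D_{X^*}X_i^*)=-\xi_j\bigl(D_{X^*}X_i^*(K)\bigr)\quad\text{at }K.$$
I will insert the four cases of Proposition~\ref{D_X*Y*}.

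For $X\in\mathfrak m_1$:
\begin{itemize}
\item if $X_i\in\mathfrak m_1$, we get $-\tfrac12\langle X_j,[X,X_i]\rangle_{alg}$ (the $\mathfrak m$-projection can be dropped since $X_j\in\mathfrak m$);
\item if $X_i\in\mathfrak m_2$, we get $0$.
\end{itemize}
For $X\in\mathfrak m_2$:
\begin{itemize}
\item if $X_i\in\mathfrak m_1$, we get $-\langle X_j,[X,X_i]_{\mathfrak m_2}\rangle_{alg}=-\langle X_j,[X,X_i]\rangle_{alg}$. This is nonzero only when $j\ge4$, because $[\mathfrak m_1,\mathfrak m_2]\subset\mathfrak m_2\oplus\mathfrak k$;
\item if $X_i\in\mathfrak m_2$, we get $-\tfrac12\langle X_j,[X,X_i]\rangle_{alg}$.
\end{itemize}
This is exactly the first formula. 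Write it as $D_{X^*}\xi_j^*(K)=\sum_i c_{ji}(X)\,\xi_i$.

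\textbf{Step 2: constancy of the coefficients along the curve.} This is the key step for the second derivative. Let $\gamma(t)=\exp(tX)K$. By Lemma~\ref{sigma}, $\sigma(\gamma(t))=\exp(tX)$, and this gives
$$\sigma(\gamma(t+s))=\exp(tX)\,\sigma(\gamma(s)).$$
Consequently, along $\gamma$ near $\gamma(t)$, the fields $X^*$ and $\xi_j^*$ are the $\tau_{\exp(tX)}$-translates of the same fields along $\gamma$ near $K$. As in the proof of Lemma~\ref{geodesic}, $X^*(\gamma(t))=\dot\gamma(t)$. The covariant derivative $D_{\dot\gamma}\xi_j^*$ depends only on $\xi_j^*$ along $\gamma$. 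Since $D$ is $G$-invariant, we therefore get
$$D_{X^*}\xi_j^*(\gamma(t))=\sum_i c_{ji}(X)\,\xi_i^*(\gamma(t))$$
with the same constants $c_{ji}(X)$ for all small $t$. Differentiating once more at $t=0$ yields
$$D_{X^*}D_{X^*}\xi_j^*(K)=\sum_{i,k}c_{jk}(X)\,c_{ki}(X)\,\xi_i .$$

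\textbf{Step 3: simplifying $c^2$.} Here the only real bookkeeping is needed, namely the mixed weights in the $\mathfrak m_2$ case, which I expect to be the main obstacle.

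For $X\in\mathfrak m_1$ only indices in $\mathfrak m_1$ occur, and $[X,X_i]\in\mathfrak m_1$. So
$$\sum_k c_{jk}c_{ki}=\tfrac14\sum_{k\le3}\langle X_j,[X,X_k]\rangle_{alg}\langle X_k,[X,X_i]\rangle_{alg}=\tfrac14\langle X_j,[X,[X,X_i]_{\mathfrak m}]\rangle_{alg},$$
by completeness of the orthonormal basis of $\mathfrak m_1$. This matches the stated formula.

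For $X\in\mathfrak m_2$, the operator $\operatorname{ad}X$ swaps $\mathfrak m_1$ and $\mathfrak m_2$ modulo $\mathfrak k$. I will check both kinds of path:
\begin{itemize}
\item $i\in\mathfrak m_1\to k\in\mathfrak m_2\to j\in\mathfrak m_1$ carries weights $(-1)(-\tfrac12)=\tfrac12$;
\item $i\in\mathfrak m_2\to k\in\mathfrak m_1\to j\in\mathfrak m_2$ carries weights $(-\tfrac12)(-1)=\tfrac12$.
\end{itemize}
So both kinds of path carry weight $\tfrac12$, and the sum over $k$ collapses to $\tfrac12\langle X_j,[X,[X,X_i]_{\mathfrak m}]\rangle_{alg}$ for every $i$. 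The $\mathfrak k$-components are killed because only $\mathfrak m$-basis vectors $X_k$ appear in the sum.

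Finally, $\langle X_j,[X,Z]\rangle_{alg}=\langle [X,Z],X_j\rangle_{alg}$ up to the order of arguments, so this gives the stated expression $\langle[X,[X,X_i]_{\mathfrak m}],X_j\rangle_{alg}$. I will double-check this sign against the convention for $\xi_i$ before finalizing.
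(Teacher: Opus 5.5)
Your proof is correct. Step~1 matches the paper's computation. Both arguments for the second derivative rest on the same fact: $\tau_{\exp(tX)}$ is an isometry carrying $X^*$ and the frames $\{X_i^*\}$, $\{\xi_i^*\}$ along $\gamma(s)=\exp(sX)K$ onto themselves along $\gamma(t+s)$, so connection coefficients are constant along $\gamma$.

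The two routes use this fact differently:

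\begin{itemize}
\item You apply it directly to the coframe. Along $\gamma$ you get $D_{X^*}\xi_j^*=\sum_i c_{ji}(X)\,\xi_i^*$, and hence $D_{X^*}D_{X^*}\xi_j^*(K)=\sum_{i,k}c_{jk}c_{ki}\,\xi_i$.
\item The paper instead differentiates $\xi_j^*(X_i^*)=\delta_{ij}$ twice, obtaining $(D_{X^*}D_{X^*}\xi_j^*)(X_i^*)=-2X^*\bigl(\xi_j^*(D_{X^*}X_i^*)\bigr)+\xi_j^*(D_{X^*}D_{X^*}X_i^*)$. It kills the first term by constancy along $\gamma$. It then evaluates $D_{X^*}D_{X^*}X_i^*(K)=\pi_{K*}\alpha(X,\alpha(X,X_i))$ by a second application of $D=D^0+\alpha$, where the $D^0$-term vanishes again by constancy.
\end{itemize}

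The two computations agree term by term. Indeed $c_{jk}(X)=-\langle X_j,\alpha(X,X_k)\rangle_{alg}$, and $\{X_k\}$ is $\langle\ ,\ \rangle_{alg}$-orthonormal in $\mathfrak m$. So your $\sum_k c_{jk}c_{ki}$ is exactly $\langle X_j,\alpha(X,\alpha(X,X_i))\rangle_{alg}$.

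Your route needs only the first-derivative formula and the isometry invariance of $D$, with no second pass through $D^0$. The paper's route yields the closed form $\alpha(X,\alpha(X,\cdot))$, so the factors $\tfrac14$ and $\tfrac12$ come straight from the case table for $\alpha$ rather than from your path-by-path weight count. In that count you should also say explicitly that for $X\in\mathfrak m_2$ the block-mismatched entries vanish on both sides, by the same swapping property of $\operatorname{ad}X$.

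Finally, there is no sign to double-check at the end. $\langle\ ,\ \rangle_{alg}$ is symmetric, so $\langle X_j,[X,Z]\rangle_{alg}=\langle [X,Z],X_j\rangle_{alg}$ exactly.
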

	\begin{proof}
		By Proposition \ref{D_X*Y*}, at $K$,
		\begin{align*}
			\langle D_{X^*}\xi_j^*,X_i^*\rangle&=X^*\langle\xi_j^*,X_i^*\rangle-\langle\xi_j^*,D_{X^*}X_i^*\rangle\\
			&=\left\{
			\begin{array}{ll}
				-\frac12\langle X_j,[X,X_i]\rangle,&X,X_i\in\mathfrak{m}_1\ or\ X,X_i\in\mathfrak{m}_2,\\
				0,&X\in\mathfrak{m}_1, X_i\in\mathfrak{m}_2\\
				-2\langle X_j,[X,X_i]\rangle,&X\in\mathfrak{m}_2, X_i\in\mathfrak{m}_1.
			\end{array}
			\right.
		\end{align*}
		Using the relation between $\langle\ ,\ \rangle$ and $\langle\ ,\ \rangle_{alg}$, we obtain the first formula.
		
		Differentiating the identity $\xi_j^*(X_i^*)=\delta_{ij}$ twice gives
		\begin{align*}
			\langle D_{X^*}D_{X^*}\xi_j^*,X_i^*\rangle=&X^*X^*\langle\xi_j^*,X_i^*\rangle-2\langle D_{X^*}\xi_j^*,D_{X^*}X_i^*\rangle-\langle\xi_j^*,D_{X^*}D_{X^*}X_i^*\rangle\\
			=&-2X^*\langle\xi_j^*,D_{X^*}X_i^*\rangle+\langle\xi_j^*,D_{X^*}D_{X^*}X_i^*\rangle.
		\end{align*}
		Since $D$ is $G$-invariant,
		\begin{align*}
			D_{X^*}X_i^*(\exp(tX)K)=\tau_{\exp(tX)*}(D_{\tau_{\exp(-tX)*}X^*}\tau_{\exp(-tX)*}X_i^*(K))
		\end{align*}
		Let $\hat X$ be the horizontal lift of $\tau_{\exp(-tX)*}X^*$. Then $\hat X(e)=X$. The definition of $D^0$ yields
		\begin{align*}
			&D^0_{\tau_{\exp(-tX)*}X^*}\tau_{\exp(-tX)*}X_i^*(K)\\
			=&\frac{d}{ds}|_{s=0}\pi_{K*}(f_{\tau_{\exp(-tX)*}X_i^*}(\exp(sX)))\\
			=&\frac{d}{ds}|_{s=0}\pi_{K*}\circ(\pi_{K*}|_{\mathfrak{m}})^{-1}\circ\tau_{\exp(-sX)*}(\tau_{\exp(-tX)*}X_i^*(\exp(sX)K))\\
			=&\frac{d}{ds}|_{s=0}X_i'\\
			=&0.
		\end{align*}
		Thus $D_{\tau_{exp(-tX)*}X^*}\tau_{exp(-tX)*}X_i^*(K)=\pi_{K*}\alpha(X,X_i)$. Therefore,
		\begin{align*}
			\langle\xi_j^*,D_{X^*}X_i^*\rangle(exp(tX)K)=\langle\tau_{exp(-tX)}^*\xi_j,\tau_{exp(tX)*}\pi_{K*}\alpha(X,X_i)\rangle=\langle\xi_j,\pi_{K*}\alpha(X,X_i)\rangle
		\end{align*}
		is independent of $t$. Hence,
		\begin{align*}
			X^*\langle\xi_j^*,D_{X^*}X_i^*\rangle(K)=\frac{d}{dt}|_{t=0}\langle\xi_j^*,D_{X^*}X_i^*\rangle(exp(tX)K)=0.
		\end{align*}
		Finally, we compute $D_{X^*}D_{X^*}X_i^*(K)$. By definition, we have
		\begin{align*}
			&D^0_{X^*}D_{X^*}X_i^*(K)\\
			=&\frac{d}{dt}|_{t=0}\pi_{K*}f_{D_{X^*}X_i^*}(\exp(tX))\\
			=&\frac{d}{dt}|_{t=0}\pi_{K*}\circ(\pi_{K*}|_{\mathfrak{m}})^{-1}\circ\tau_{\exp(-tX)*}(D_{X^*}X_i^*(\exp(tX)K))\\
			=&\frac{d}{dt}|_{t=0}\pi_{K*}\circ(\pi_{K*}|_{\mathfrak{m}})^{-1}\circ\tau_{\exp(-tX)*}\circ\tau_{\exp(tX)*}(D_{\tau_{\exp(-tX)*}X^*}\tau_{\exp(-tX)*}X_i^*(K))\\
			=&\frac{d}{dt}|_{t=0}\pi_{K*}\alpha(X,X_i)\\
			=&0.
		\end{align*}
		Thus
		\begin{align*}
			D_{X^*}D_{X^*}X_i^*(K)&=\pi_{K*}\alpha(X,\alpha(X,X_i))\\
			&=\left\{
			\begin{array}{ll}
				\frac14\pi_{K*}[X,[X,X_i]_{\mathfrak{m}}],&X,X_i\in\mathfrak{m}_1,\\
				0,&X\in\mathfrak{m}_1,X_i\in\mathfrak{m}_2,\\
				\frac12\pi_{K*}[X,[X,X_i]_{\mathfrak{m}}],&X\in\mathfrak{m}_2.
			\end{array}
			\right.
		\end{align*}
		Substituting this formula into the preceding identity completes the proof.
	\end{proof}
	Assume $B\in\Omega^1(\mathfrak{g}_E)$ has the form $B(gK)=\sum_{j=1}^7\xi_j^*\otimes[g,U_j(g)]\in\Omega^1(\mathfrak{g}_E)$. Lemma \ref{Dxi D^2xi} shows that
	\begin{align*}
		&-\sum_{i=1}^3\nabla_{X_i^*}\nabla_{X_i^*}B(K)-2\sum_{i=4}^7\nabla_{X_i^*}\nabla_{X_i^*}B(K)\\
		=&-\frac14\sum_{i,k=1}^3\langle[X_i,[X_i,X_k]_{\mathfrak{m}}],X_j\rangle_{alg}\xi_k\otimes [e,U_j(e)]-\sum_{i=4}^7\sum_{k=1}^7\langle[X_i,[X_i,X_k]_{\mathfrak{m}}],X_j\rangle_{alg}\xi_k\otimes [e,U_j(e)]\\
		&+\sum_{i,k=1}^3\langle X_j,[X_i,X_k]\rangle_{alg}\xi_k\otimes[e,X_i(U_j)(e)]+4\sum_{i=4}^7\sum_{k=1}^3\langle X_j,[X_i,X_k]\rangle_{alg}\xi_k\otimes[e,X_i(U_j)(e)]\\
		&+2\sum_{i,k=4}^7\langle X_j,[X_i,X_k]\rangle_{alg}\xi_k\otimes[e,X_i(U_j)(e)]-\sum_{i=1}^3\xi_j\otimes[e,X_iX_i(U_j)(e)]\\
		&-2\sum_{i=4}^7\xi_j\otimes[e,X_iX_i(U_j)(e)],
	\end{align*}
	where $X(U)(g):=\frac{d}{dt}|_{t=0}U(g\exp(tX))$ and $XX(U)(g):=\frac{d^2}{dt^2}|_{t=0}U(g\exp(tX))$ denote the first and second derivatives of $U$ along the curve $t\mapsto g\exp(tX)$, respectively.
	Now we can define the Jacobi operator $\tilde J:(C^\infty(G)\otimes\mathfrak{m}^*\otimes\mathfrak{k})_K\to(C^\infty(G)\otimes\mathfrak{m}^*\otimes\mathfrak{k})_K$ such that the following diagram commutes
	\[
	\begin{tikzcd}
		\Omega^1(\mathfrak{g}_E) \arrow[r, "\Theta"] \arrow[d, "J"'] & 
		\Gamma(G\times_K(\mathfrak{m}^*\otimes\mathfrak{k})) \arrow[r, "\Phi^{-1}"] & 
		C_K^\infty(G,\mathfrak{m}^*\otimes\mathfrak{k}) \arrow[r, "\Psi"] & 
		(C^\infty(G)\otimes\mathfrak{m}^*\otimes\mathfrak{k})_K \arrow[d, "\tilde{J}"'] \\
		\Omega^1(\mathfrak{g}_E) \arrow[r, "\Theta"] & 
		\Gamma(G\times_K(\mathfrak{m}^*\otimes\mathfrak{k})) \arrow[r, "\Phi^{-1}"] & 
		C_K^\infty(G,\mathfrak{m}^*\otimes\mathfrak{k}) \arrow[r, "\Psi"] & 
		(C^\infty(G)\otimes\mathfrak{m}^*\otimes\mathfrak{k})_K
	\end{tikzcd}
	\]
	Since $Ad(K)\mathfrak{m}\subset\mathfrak{m}$ and $Ad(K)\mathfrak{k}\subset\mathfrak{k}$, we assume
	\begin{align*}
		Ad(k)X_i=\sum_{j=1}^7a_{ij}(k)X_j,\quad Ad(k)X_\alpha=\sum_{\beta=8}^{10}a_{\alpha\beta}(k)X_\beta
	\end{align*}
	for any $k\in K$, $1\le i\le7$ and $8\le\alpha\le10$. Then we have $\{a_{ij}(k)\}\in SO(7)$, $\{a_{\alpha\beta}(k)\}\in SO(3)$ and $Ad^*(k)\xi_j=\sum_{i=1}^7a_{ij}(k)\xi_i$. For any $g\in(\pi_K)^{-1}(U)$, there is a unique pair $(X,k)\in U'\times K$ such that $g=exp(X)k$. We can also verify that for any
	\begin{align*}
		s(exp(X)K)=\sum_{j=1}^7\xi_j^*(exp(X)K)\otimes[exp(X),U_j(expX)]\in\Omega^1(\mathfrak{g}_E)
	\end{align*}
	with $U_j=\sum_{\alpha=8}^{10}U_{j\alpha}X_\alpha$, we have
	\begin{align*}
		\Psi\circ\Phi^{-1}\circ\Theta(s)(exp(X)k)=\sum_{i,j=1}^7\sum_{\alpha,\beta=8}^{10}a_{ji}(k)a^{\beta\alpha}(k)U_{i\beta}(expX)\otimes\xi_j\otimes X_\alpha.
	\end{align*}
	Using $a_{ij}(e)=\delta_{ij}$, $a_{\alpha\beta}(e)=\delta_{\alpha\beta}$, and $Ric=6id$ (see, for example, \cite[p.~199]{JL}), we obtain the following formulas at $e$. In the following formulas, summation over the repeated indices $j$ and $\alpha$ is understood, where $1\leq j\leq7$ and $8\leq\alpha\leq10$.
	\begin{align}
		\tilde J(U_{j\alpha}\otimes\xi_j\otimes X_\alpha)(e)=\tilde J_2+\tilde J_1+\tilde J_0,
	\end{align}
	where
	\begin{align*}
		\tilde J_2=&-\sum_{i=1}^3X_iX_i(U_{j\alpha})(e)\otimes\xi_j\otimes X_\alpha-2\sum_{i=4}^7X_iX_i(U_{j\alpha})(e)\otimes\xi_j\otimes X_\alpha,\\
		\tilde J_1=&\sum_{i,k=1}^3\langle X_j,[X_i,X_k]\rangle_{alg}X_i(U_{j\alpha})(e)\otimes\xi_k\otimes X_\alpha+4\sum_{i=4}^7\sum_{k=1}^3\langle X_j,[X_i,X_k]\rangle_{alg}X_i(U_{j\alpha})(e)\otimes\xi_k\otimes X_\alpha\\
		&+2\sum_{i,k=4}^7\langle X_j,[X_i,X_k]\rangle_{alg}X_i(U_{j\alpha})(e)\otimes\xi_k\otimes X_\alpha,\\
		\tilde J_0=&-\frac14\sum_{i,k=1}^3\langle[X_i,[X_i,X_k]_{\mathfrak m}],X_j\rangle_{alg}U_{j\alpha}(e)\otimes\xi_k\otimes X_\alpha-\sum_{i=4}^7\sum_{k=1}^7\langle[X_i,[X_i,X_k]_{\mathfrak m}],X_j\rangle_{alg}U_{j\alpha}(e)\otimes\xi_k\otimes X_\alpha\\
		&-2\sum_{i=1}^3U_{i\alpha}(e)\otimes\xi_j\otimes[[X_i,X_j]_{\mathfrak k},X_\alpha]-4\sum_{i=4}^7U_{i\alpha}(e)\otimes\xi_j\otimes[[X_i,X_j]_{\mathfrak k},X_\alpha]+6U_{j\alpha}(e)\otimes\xi_j\otimes X_\alpha.
	\end{align*}
	
	We can also obtain
	\begin{align*}
		\tilde \delta^\triangledown(U_{j\alpha}\otimes\xi_j\otimes X_\alpha)(e)=-(\sum_{i=1}^3X_i(U_{i\alpha})(e)+2\sum_{i=4}^7X_i(U_{i\alpha})(e))\otimes X_\alpha.
	\end{align*}
	This operator makes the following diagram commute. In the lower row, let $\Theta_0$, $\Phi_0^{-1}$, and $\Psi_0$ denote the corresponding identifications for zero-forms.
	\[
	\begin{tikzcd}
		\Omega^1(\mathfrak{g}_E) \arrow[r, "\Theta"] \arrow[d, "\delta^\triangledown"'] & 
		\Gamma(G\times_K(\mathfrak{m}^*\otimes\mathfrak{k})) \arrow[r, "\Phi^{-1}"] & 
		C_K^\infty(G,\mathfrak{m}^*\otimes\mathfrak{k}) \arrow[r, "\Psi"] & 
		(C^\infty(G)\otimes\mathfrak{m}^*\otimes\mathfrak{k})_K \arrow[d, "\tilde\delta^\triangledown"'] \\
		\Omega^0(\mathfrak{g}_E) \arrow[r, "\Theta_0"] & 
		\Gamma(G\times_K\mathfrak{k}) \arrow[r, "\Phi_0^{-1}"] & 
		C_K^\infty(G,\mathfrak{k}) \arrow[r, "\Psi_0"] & 
		(C^\infty(G)\otimes\mathfrak{k})_K
	\end{tikzcd}
	\]
	\section{The eigenvalues of $\tilde J$}
	By the Peter-Weyl theorem (see \cite[Theorem 5.12]{BF}), $L^2(G,\mathbb{C})$ is the Hilbert direct sum of the spaces of matrix coefficients of the irreducible representations of $G$. However, it is complicated to calculate the eigenvalues of $\tilde J$ on every such invariant subspace. Therefore, we first estimate these eigenvalues to identify the representations on which non-positive eigenvalues may occur.
	\subsection{The irreducible representations of $G$}
	In this subsection, we describe the construction of the finite-dimensional irreducible complex representations of $G$. Define $\ssp=\mathfrak{sp}(2)\otimes_{\mathbb{R}}\mathbb{C}$ and let
	\begin{align*}
		\spp=\{
		\begin{pmatrix}
			A&B\\
			C&-A^T
		\end{pmatrix}
		|A,B,C\in M_2(\mathbb{C}), B=B^T, C=C^T\}\subset M_4(\mathbb{C}).
	\end{align*}
	For any $Z\in M_2(\mathbb{H})$, there exist unique $X,Y\in M_2(\mathbb{C})$ such that $Z=X+jY$. Define $\chi:M_2(\mathbb{H})\to M_4(\mathbb{C})$ by
	\begin{align}\label{chi}
		\chi(Z)=
		\begin{pmatrix}
			X&-\bar Y\\
			Y&\bar X
		\end{pmatrix}.
	\end{align}
	Then $\chi(\mathfrak{sp}(2))\subset\spp$ and $\chi(Z_1Z_2)=\chi(Z_1)\chi(Z_2)$. Furthermore, $\chi$ induces a complex Lie algebra isomorphism
	\begin{align*}
		\chi_{\mathbb{C}}:\ssp\to\spp
	\end{align*}
	given by
	\begin{align*}
		\chi_{\mathbb{C}}(Z_1\otimes1+Z_2\otimes i)=\chi(Z_1)+i\chi(Z_2).
	\end{align*}
	
	Since $Sp(2)$ is simply connected, the results of Hall \cite[Theorem 4.4-4.6, Theorem 5.6]{Hall} imply that differentiation and integration induce a natural correspondence
	\begin{align*}
		Irr_{\mathbb{C}}(Sp(2))\cong Irr_{fd}(\spp).
	\end{align*}
	We shall use this correspondence without recalling its proof or explicit construction. In what follows, we also use $d\pi$ to denote the complex-linear extension of the differential of $\pi$, regarded as a representation of $\spp$ via $\chi_{\mathbb{C}}$.
	
	Fulton and Harris \cite[Section 16.2]{WJ} show that the finite-dimensional irreducible representations of $\spp$ are classified by their highest weights and hence by pairs of non-negative integers $(a,b)\in\mathbb{N}\times\mathbb{N}$. Following \cite{WJ}, we now construct the corresponding representation space $V_{a,b}$ and denote the representation of $G$ on it by $\pi_{a,b}:G\to GL(V_{a,b})$.
	
	$\mathbf{V_{0,0}:\ }$Let $V_{0,0}=\mathbb{C}$ with the representation defined by
	\begin{align*}
		\pi_{0,0}(g)=Id_{V_{0,0}}
	\end{align*}
	for any $g\in G$, or equivalently,
	\begin{align*}
		d\pi_{0,0}(X)=0
	\end{align*}
	for any $X\in\spp$.
	
	$\mathbf{V_{1,0}:\ }$Let $V_{1,0}=\mathbb{C}^4$ with the representation defined by
	\begin{align*}
		\pi_{1,0}(g)v=\chi(g)v
	\end{align*}
	for any $g\in G$ and $v\in V_{1,0}$, or equivalently,
	\begin{align*}
		d\pi_{1,0}(X)v=Xv
	\end{align*}
	for any $X\in\spp$.
	
	$\mathbf{V_{0,1}:\ }$Let $\Lambda^2\mathbb{C}^4={\rm span}_{\mathbb{C}}\{\xi_1,\dots,\xi_6\}$, where
	\begin{align*}
		\xi_1=e_1\wedge e_2,\quad \xi_2=e_1\wedge e_3,\quad \xi_3=e_1\wedge e_4,\quad \xi_4=e_2\wedge e_3,\quad \xi_5=e_2\wedge e_4,\quad \xi_6=e_3\wedge e_4.
	\end{align*}
	Let $\Omega=\begin{pmatrix}0&I\\-I&0\end{pmatrix}$ and define $c_\Omega:\Lambda^2\mathbb{C}^4\to\mathbb{C}$ by
	\begin{align*}
		c_\Omega(w\wedge v)=w^T\Omega v.
	\end{align*}
	A direct computation gives $c_\Omega(\xi_1)=c_\Omega(\xi_3)=c_\Omega(\xi_4)=c_\Omega(\xi_6)=0$ and $c_\Omega(\xi_2)=c_\Omega(\xi_5)=1$. We also define
	\begin{align*}
		V_{0,1}={\rm ker\,}(c_\Omega)={\rm span}_{\mathbb{C}}\{\xi_1,\xi_2-\xi_5,\xi_3,\xi_4,\xi_6\}.
	\end{align*}
	The induced action of $G$ on $\Lambda^2\mathbb{C}^4$ is given by
	\begin{align*}
		\pi_{0,1}(g)(w\wedge v)=(\chi(g)w)\wedge(\chi(g)v)
	\end{align*}
	for any $g\in G$ and $w,v\in\mathbb{C}^4$. Since $\chi(g)^T\Omega\chi(g)=\Omega$, this action preserves $V_{0,1}$. We denote its restriction to $V_{0,1}$ by $\pi_{0,1}$.
	
	$\mathbf{V_{a,b}:\ }$Let $v_{a,b}=e_1^{\otimes a}\otimes \xi_1^{\otimes b}$ and
	\begin{align*}
		V_{a,b}={\rm span}_{\mathbb{C}}\{X_1\cdots X_rv_{a,b}\mid r\ge0,\ X_1,\dots,X_r\in\spp\},
	\end{align*}
	where
	\begin{align*}
		X(v_1\otimes\dots\otimes v_a\otimes\zeta_1\otimes\dots\otimes\zeta_b)=\sum_{j=1}^av_1\otimes\dots\otimes(d\pi_{1,0}(X)v_j)\otimes\dots\otimes\zeta_b+\sum_{k=1}^bv_1\otimes\dots\otimes(d\pi_{0,1}(X)\zeta_k)\otimes\dots\otimes\zeta_b.
	\end{align*}
	The tensor-product representation $\pi_{1,0}^{\otimes a}\otimes\pi_{0,1}^{\otimes b}$ preserves $V_{a,b}$. We denote its restriction to $V_{a,b}$ by $\pi_{a,b}$.
	
	We conclude this subsection by introducing the real matrix-coefficient spaces used in the Peter-Weyl decomposition. Let $l_{a,b}={\rm dim\,}_{\mathbb{C}}V_{a,b}$ and choose a $G$-invariant Hermitian inner product on $V_{a,b}$ with an orthonormal basis $\{e_s\mid1\le s\le l_{a,b}\}$. For $1\le s,t\le l_{a,b}$, define $U^{st}\in C^\infty(G,\mathbb{C})$ by
	\begin{align}\label{U^st}
		U^{st}(g):=\langle\pi_{a,b}(g)e_s,e_t\rangle
	\end{align}
	and let
	\begin{align*}
		\mathcal{U}_{a,b}={\rm span}_{\mathbb{R}}\{Re(U^{st}), Im(U^{st})\mid1\le s,t\le l_{a,b}\}.
	\end{align*}
	Since every irreducible complex representation of $G$ is self-dual, its matrix-coefficient space is invariant under complex conjugation, and $\mathcal{U}_{a,b}$ is a real form of this space. By the Peter-Weyl theorem \cite[(5.6), (5.8), (5.12)]{BF},
	\begin{align*}
		L^2(G,\mathbb{R})=\overline{\bigoplus_{a,b\ge0}\mathcal{U}_{a,b}}.
	\end{align*}
	\subsection{Eigenvalue estimates for $\tilde J$}
	In this section, we will estimate the eigenvalues of $\tilde J$ on $(C^\infty(G)\otimes\mathfrak{m}^*\otimes\mathfrak{k})_K$ and show that non-positive eigenvalues can occur for at most four representations $V_{a,b}$.
	
	Since $\tilde J_0$ is the tensor product of the identity map on $C^\infty(G)$ with a linear endomorphism of $\mathfrak{m}^*\otimes\mathfrak{k}$, its fiberwise eigenvalues and eigenspaces can be computed directly.
	\begin{lemma}\label{eigenvalue of J0}
		The fiberwise endomorphism $\tilde J_0|_e$ has eigenvalues $12$, $17$, and $-7$, with multiplicities $9$, $8$, and $4$, respectively. Moreover, the global eigenspaces corresponding to $12$ and $-7$ are
		\begin{align*}
			&(C^\infty(G)\otimes\mathfrak{m}_1^*\otimes\mathfrak{k})_K=\{\sum_{i=1}^3\sum_{\alpha=8}^{10}U_{i\alpha}\otimes\xi_i\otimes X_\alpha\in(C^\infty(G)\otimes\mathfrak{m}^*\otimes\mathfrak{k})_K\},\\
			&(C^\infty(G)\otimes\mathfrak{m}^*\otimes\mathfrak{k})_K\cap C^\infty(G)\otimes {\rm span}_{\mathbb{R}}\{F_1,F_2,F_3,F_4\},
		\end{align*}
		respectively, where
		\begin{equation}\label{F1-F4}
			\begin{split}
				&F_1=\xi_7\otimes X_9-\xi_4\otimes X_8-\xi_6\otimes X_{10},\\
				&F_2=\xi_5\otimes X_{10}-\xi_4\otimes X_9-\xi_7\otimes X_8,\\
				&F_3=\xi_6\otimes X_8-\xi_4\otimes X_{10}-\xi_5\otimes X_9,\\
				&F_4=\xi_5\otimes X_8+\xi_6\otimes X_9+\xi_7\otimes X_{10}.
			\end{split}
		\end{equation}
	\end{lemma}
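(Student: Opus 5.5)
The plan is to split $\mathfrak{m}^*\otimes\mathfrak{k}=(\mathfrak{m}_1^*\otimes\mathfrak{k})\oplus(\mathfrak{m}_2^*\otimes\mathfrak{k})$ and show that $\tilde J_0|_e$ preserves this splitting. On the $9$-dimensional first summand it should be the scalar $12$. On the $12$-dimensional second summand it should be a scalar plus a Casimir-type coupling operator, diagonalized by $K$-representation theory.

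\emph{Step 1: the curvature term.} The two sums in $\tilde J_0$ involving $[[X_i,X_j]_{\mathfrak k},X_\alpha]$ only see the $\mathfrak{k}$-part of brackets. From the relations $[\mathfrak m_1,\mathfrak m_1]\subset\mathfrak m_1$ and $[\mathfrak m_1,\mathfrak m_2]\subset\mathfrak m_2\oplus\mathfrak k$, checked directly on the matrices \eqref{X_i}, one finds $[X_i,X_j]_{\mathfrak k}=0$ unless $i,j\ge4$. (For example, $[X_1,X_4]$ lies in $\mathfrak m_2$.) Hence the curvature part is $-4\sum_{i,j\ge4}U_{i\alpha}\,\xi_j\otimes[[X_i,X_j]_{\mathfrak k},X_\alpha]$. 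It vanishes on $\mathfrak m_1^*\otimes\mathfrak k$ and maps $\mathfrak m_2^*\otimes\mathfrak k$ into itself.

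\emph{Step 2: the metric terms.} These are the two double-bracket sums and $+6$, and they act only on the $\mathfrak m^*$ factor. Using $[X_a,[X_a,X_b]]=-4X_b$ for distinct imaginary units in $\mathfrak{sp}(1)$, the first sum gives $-\tfrac14(-8)=2$ on $\mathfrak m_1^*$. The second sum must be computed on $\mathfrak m_1^*$ and on $\mathfrak m_2^*$. By $\operatorname{Ad}(K)$-invariance and Schur's lemma ($\mathfrak m_1$ is $K$-trivial, $\mathfrak m_2\cong\mathbb H$ is irreducible under $K$), and since the cross terms vanish by \eqref{eq:bracket-identity}, each block is a scalar. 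So it suffices to compute $\sum_{i\ge4}\langle[X_i,[X_i,X_k]_{\mathfrak m}],X_k\rangle_{alg}$ for one $k\in\{1,\dots,7\}$ in each block. I expect the $\mathfrak m_1$ block to total $12$, and the $\mathfrak m_2$ block to give a scalar $c$.

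\emph{Step 3: the $\mathfrak m_2^*\otimes\mathfrak k$ block.} As a $K=Sp(1)$-module, $\mathfrak m_2^*\otimes\mathfrak k\cong\mathbb H\otimes\operatorname{Im}\mathbb H\cong V_{1/2}\otimes V_1=V_{3/2}\oplus V_{1/2}$, of dimensions $8$ and $4$. The curvature operator from Step 1 has the form $\sum_\gamma\rho(X_\gamma)\otimes\operatorname{ad}(X_\gamma)$ up to a constant, where $\rho$ is the $K$-action on $\mathfrak m_2^*$ obtained by expressing $[X_i,X_j]_{\mathfrak k}$ through $\langle[X_\gamma,X_i],X_j\rangle_{alg}$ via invariance. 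Its value on each summand is therefore $\tfrac12(\mathrm{Cas}_{\rm tot}-\mathrm{Cas}_{1/2}-\mathrm{Cas}_1)$ times the normalization constant. This produces two eigenvalues, and I expect $c$ plus these to be $17$ on $V_{3/2}$ and $-7$ on $V_{1/2}$.

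As a consistency check, the trace is $9\cdot12+8\cdot17-4\cdot7=216$, which should match the trace computed directly from the formula. I would also verify directly that each $F_r$ in \eqref{F1-F4} satisfies $\tilde J_0F_r=-7F_r$, which pins down the $4$-dimensional eigenspace.

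\emph{Step 4: global eigenspaces.} $\tilde J_0$ is $\mathrm{id}\otimes T$, where $T$ is a $K$-equivariant endomorphism. Its eigenspaces in $(C^\infty(G)\otimes\mathfrak m^*\otimes\mathfrak k)_K$ are therefore exactly the $K$-invariant elements of $C^\infty(G)\otimes(\text{eigenspace of }T)$. This gives the stated descriptions for $12$ and $-7$.

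The main obstacle is Step 3: getting the constants right. This means the normalization linking the curvature sum to the Casimir under the nonstandard $\langle\,,\,\rangle_{alg}$ scaling on $\mathfrak m_2$, and the scalar $c$ from the second double-bracket sum. If the Casimir bookkeeping is unclear, I would instead apply the operator explicitly to $F_4$ and to a single vector in the $8$-dimensional complement, and use the trace to cross-check.
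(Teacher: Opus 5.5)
Your plan is correct. Steps 1, 2 (once the constants are computed) and 4 agree with the paper: only $[X_i,X_j]_{\mathfrak k}$ with $i,j\ge4$ survive, $\tilde J_0=12$ on $\mathfrak m_1^*\otimes\mathfrak k$, and $\tilde J_0=9I+T$ on $\mathfrak m_2^*\otimes\mathfrak k$ with $T(\xi_j\otimes X_\alpha)=-4\sum_{i\ge4}\xi_i\otimes[[X_j,X_i]_{\mathfrak k},X_\alpha]$. The vanishing in Step 1 comes from the sharper inclusion $[\mathfrak m_1,\mathfrak m_2]\subset\mathfrak m_2$ (block-diagonal times off-diagonal), not from $[\mathfrak m_1,\mathfrak m_2]\subset\mathfrak m_2\oplus\mathfrak k$.

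The real difference is Step 3. The paper reorders the $12$ basis vectors so that $T=\operatorname{diag}(T_1,T_1,T_1,T_2)$ and computes $\det(\lambda I-T_i)=(\lambda-8)^2(\lambda+16)$ directly. You get the spectrum from Clebsch--Gordan, and this works without matrices. Set $\rho(X_\gamma)\xi_j=\sum_i\langle[X_\gamma,X_j],X_i\rangle_{alg}\xi_i$, which is the negative of the paper's $ad^*(X_\gamma)$. Then $T=-4\sum_\gamma\rho(X_\gamma)\otimes\operatorname{ad}(X_\gamma)$. We have $\sum_\gamma\rho(X_\gamma)^2=-3$, $\sum_\gamma\operatorname{ad}(X_\gamma)^2=-8$, and $\sum_\gamma(\rho(X_\gamma)\otimes1+1\otimes\operatorname{ad}(X_\gamma))^2$ equals $-15$ on the $8$-dimensional summand and $-3$ on the $4$-dimensional one. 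Hence $T=8$ and $T=-16$ there.

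The normalization worry you raise does not arise. $X_4,\dots,X_7$ are $\langle\,,\,\rangle_{alg}$-orthonormal, and the factor $2$ from the round metric is already built into the coefficients of $\tilde J_0$. The paper uses this same Casimir identity later, in the opposite direction, in Lemma \ref{lambda and q}, to get $q$ from the eigenvalue of $T$. Your route merges the two lemmas and explains why $F_1,\dots,F_4$ span the $q=1$ summand. In exchange, you must identify $\rho$ correctly and fix the Casimir constants yourself.

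What fails as written is the Schur argument in Step 2, which you use to reduce each block of the second double-bracket sum to one diagonal entry. $K$ acts trivially on $\mathfrak m_1$, so $K$-equivariance places no constraint on that $3\times3$ block, and a single entry does not determine it. On $\mathfrak m_2\cong\mathbb H$, $K$ acts by $x\mapsto x\bar q$. This is an irreducible of quaternionic type whose commutant is $\mathbb H$ (left multiplications), not $\mathbb R$. There, scalarity follows only once you also use that the operator is symmetric.

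The fix is to use $H=Sp(1)\times Sp(1)$ instead of $K$. Since $\mathfrak m_1$, $\mathfrak m_2$ and $\mathfrak k$ are all $\operatorname{Ad}(H)$-invariant, the map $X\mapsto\sum_{i\ge4}[X_i,[X_i,X]_{\mathfrak m}]_{\mathfrak m}$ is $\operatorname{Ad}(H)$-equivariant. Moreover, $\mathfrak m_1\cong\operatorname{Im}\mathbb H$ and $\mathfrak m_2\cong\mathbb R^4$ (via $x\mapsto q_1x\bar q_2$) are irreducible of real type. Alternatively, compute directly as the paper does: $\sum_{i=4}^7[X_i,[X_i,X_k]_{\mathfrak m}]_{\mathfrak m}=-4X_k$ for $k\le3$ and $-3X_k$ for $k\ge4$. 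This gives $2+4+6=12$ on $\mathfrak m_1^*\otimes\mathfrak k$ and $c=3+6=9$, hence the eigenvalues $17$ and $-7$.
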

	\begin{proof}
		By direct calculation, the Lie brackets are given by
		\begin{table}[H]
			\centering
			\captionsetup{labelsep=space,skip=6pt}
			\caption{$\mathbf{[X_i,X_j]}$}
			\label{[X_i,X_j]}
			\(\begin{array}{|c|c|c|c|c|c|c|c|}
				\hline
				[X_i,X_j]&j=1&j=2&j=3&j=4&j=5&j=6&j=7\\
				\hline
				i=1&0&2X_3&-2X_2&X_5&-X_4&X_7&-X_6\\
				\hline
				i=2&-2X_3&0&2X_1&X_6&-X_7&-X_4&X_5\\
				\hline
				i=3&2X_2&-2X_1&0&X_7&X_6&-X_5&-X_4\\
				\hline
				i=4&-X_5&-X_6&-X_7&0&X_1-X_8&X_2-X_9&X_3-X_{10}\\
				\hline
				i=5&X_4&X_7&-X_6&-X_1+X_8&0&X_3+X_{10}&-X_2-X_9\\
				\hline
				i=6&-X_7&X_4&X_5&-X_2+X_9&-X_3-X_{10}&0&X_1+X_8\\
				\hline
				i=7&X_6&-X_5&X_4&-X_3+X_{10}&X_2+X_9&-X_1-X_8&0\\
				\hline
			\end{array}\)
		\end{table}
		and
		\begin{table}[H]
			\centering
			\captionsetup{labelsep=space,skip=6pt}
			\caption{$\mathbf{[X_\alpha,X_\beta]}$}
			\label{[X_alpha,X_beta]}
			\(\begin{array}{|c|c|c|c|}
				\hline
				[X_\alpha,X_\beta]&\beta=8&\beta=9&\beta=10\\
				\hline
				\alpha=8&0&2X_{10}&-2X_9\\
				\hline
				\alpha=9&-2X_{10}&0&2X_8\\
				\hline
				\alpha=10&2X_9&-2X_8&0\\
				\hline
			\end{array}\)
		\end{table}
		To compute $\tilde J_0$, a direct calculation gives
		\begin{align*}
			&\sum_{i=1}^3[X_i,[X_i,X_k]_{\mathfrak{m}}]_{\mathfrak{m}}=-8X_k,\\
			&\sum_{i=4}^7[X_i,[X_i,X_k]_{\mathfrak{m}}]_{\mathfrak{m}}=-4X_k
		\end{align*}
		for any $1\le k\le3$ and
		\begin{align*}
			\sum_{i=4}^7[X_i,[X_i,X_k]_{\mathfrak{m}}]_{\mathfrak{m}}=-3X_k
		\end{align*}
		for $4\le k\le7$. Thus we have
		\begin{align*}
			&\tilde J_0(\sum_{j=1}^7\sum_{\alpha=8}^{10}U_{j\alpha}\otimes\xi_j\otimes X_\alpha)(e)\\
			=&12\sum_{j=1}^3\sum_{\alpha=8}^{10}U_{j\alpha}(e)\otimes\xi_j\otimes X_\alpha+9\sum_{j=4}^7\sum_{\alpha=8}^{10}U_{j\alpha}(e)\otimes\xi_j\otimes X_\alpha-4\sum_{i,j=4}^7\sum_{\alpha=8}^{10}U_{j\alpha}(e)\otimes\xi_i\otimes[[X_j,X_i]_{\mathfrak{k}},X_\alpha].
		\end{align*}
		Therefore,
		\begin{align*}
			\tilde J_0=12Id
		\end{align*}
		on $(C^\infty(G)\otimes\mathfrak{m}_1^*\otimes\mathfrak{k})_K$.
		
		Define
		\begin{align*}
			T(\xi_j\otimes X_\alpha)=-4\sum_{i=4}^7\xi_i\otimes[[X_j,X_i]_{\mathfrak{k}},X_\alpha]
		\end{align*}
		for any $4\le j\le7$, $8\le\alpha\le 10$ and let
		\begin{align*}
			T_1=\begin{pmatrix}
				0&-8&8\\-8&0&8\\8&8&0
			\end{pmatrix},\quad
			T_2=\begin{pmatrix}
				0&-8&-8\\-8&0&-8\\-8&-8&0
			\end{pmatrix}.
		\end{align*}
		Then $det(\lambda I-T_1)=det(\lambda I-T_2)=(\lambda-8)^2(\lambda+16)$. With respect to the ordered basis
		\begin{align*}
			(\xi_4\otimes X_8,\xi_6\otimes X_{10},\xi_7\otimes X_9,\xi_4\otimes X_9,\xi_7\otimes X_8,\xi_5\otimes X_{10},\xi_4\otimes X_{10},\xi_5\otimes X_9,\xi_6\otimes X_8,\xi_5\otimes X_8,\xi_6\otimes X_9,\xi_7\otimes X_{10})
		\end{align*}
		we have
		\begin{align*}
			T=diag(T_1,T_1,T_1,T_2).
		\end{align*}
		Since $\tilde J_0=9I+T$ on the subspace spanned by $\{\xi_j\otimes X_\alpha\mid4\le j\le7,\ 8\le\alpha\le10\}$, the corresponding eigenvalues of $\tilde J_0$ are $17$ and $-7$, with multiplicities $8$ and $4$, respectively. Moreover, the $-16$-eigenspace of $T$ is spanned by $\{F_1,\dots,F_4\}$. Therefore, the $-7$-eigenspace of $\tilde J_0$ is spanned by the same vectors.
	\end{proof}
	Since the differential operators $X_i$ commute with left translations, the formula for $\tilde J_2$ computed at $e$ determines the corresponding global differential operator. Define the Casimir differential operators on $C^\infty(G)$ by
	\begin{align*}
		C_G=-\sum_{i=1}^{10}X_i^2,\quad C_H=-\sum_{i=1}^3X_i^2,\quad C_K=-\sum_{\alpha=8}^{10}X_\alpha^2.
	\end{align*}
	For $v,w\in V_{a,b}$, let $U_{v,w}(g)=\langle\pi_{a,b}(g)v,w\rangle$. Since $XU_{v,w}=U_{d\pi_{a,b}(X)v,w}$, the restrictions of $C_G$, $C_H$, and $C_K$ to $\mathcal{U}_{a,b}$ are induced, respectively, by
	\begin{align*}
		-\sum_{i=1}^{10}d\pi_{a,b}(X_i)^2,\quad-\sum_{i=1}^3d\pi_{a,b}(X_i)^2,\quad-\sum_{\alpha=8}^{10}d\pi_{a,b}(X_\alpha)^2.
	\end{align*}
	Here $C_H$ and $C_K$ correspond to the first and second $Sp(1)$-factors of $H$, respectively. By the definition of $\tilde J_2$, we have
	\begin{align*}
		\tilde J_2=(2C_G-C_H-2C_K)\otimes I\otimes I
	\end{align*}
	on $(C^\infty(G)\otimes\mathfrak{m}^*\otimes\mathfrak{k})_K$.
	\begin{lemma}\label{eigenvalue of J2}
		As an $H$-module, $V_{a,b}$ admits the multiplicity-free decomposition
		\begin{align*}
			V_{a,b}=\bigoplus_{r=0}^b\bigoplus_{s=0}^aV_{a,b}(r+s,a+r-s),
		\end{align*}
		where $V_{a,b}(p,q)\cong Sym^p(\mathbb{C}^2)\otimes Sym^q(\mathbb{C}^2)$. For $p=r+s$ and $q=a+r-s$, define
		\begin{align*}
			\mathcal{U}_{a,b}(p,q)={\rm span}_{\mathbb{R}}\{Re(U_{v,w}),Im(U_{v,w})\mid v\in V_{a,b}(p,q),\ w\in V_{a,b}\}.
		\end{align*}
		Then
		\begin{align*}
			\mathcal{U}_{a,b}=\bigoplus_{r=0}^b\bigoplus_{s=0}^a\mathcal{U}_{a,b}(r+s,a+r-s),
		\end{align*}
		and
		\begin{align*}
			\tilde J_2=\lambda_{a,b}(p,q)Id
		\end{align*}
		on $(\mathcal{U}_{a,b}(p,q)\otimes\mathfrak{m}^*\otimes\mathfrak{k})_K$, where
		\begin{align*}
			\lambda_{a,b}(p,q)=2(a^2+2ab+2b^2+4a+6b)-p(p+2)-2q(q+2).
		\end{align*}
	\end{lemma}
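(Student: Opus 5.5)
The proof has three steps. First, establish the branching rule for $H=Sp(1)\times Sp(1)\subset G$. Second, transfer the resulting $H$-isotypic decomposition to the matrix-coefficient spaces. Third, read off $\tilde J_2=(2C_G-C_H-2C_K)\otimes I\otimes I$ from the Casimir eigenvalues.

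\emph{Step 1 (branching).} Under $\chi$, the Cartan subalgebra of $\spp$ is the diagonal $\operatorname{diag}(t_1,t_2,-t_1,-t_2)$. The first and second $Sp(1)$-factors of $H$ correspond to the coordinates $t_1$ and $t_2$, and $e_1$ has weight $L_1$ while $\xi_1=e_1\wedge e_2$ has weight $L_1+L_2$. Thus $V_{a,b}$ has highest weight $(a+b)L_1+bL_2$.

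I would prove the decomposition by comparing characters. I would restrict the Weyl character of $V_{a,b}$ to the maximal torus, which is common to $G$ and $H$. I would then show that it equals $\sum_{r=0}^b\sum_{s=0}^a \chi_{r+s}(x_1)\chi_{a+r-s}(x_2)$, where $\chi_n$ is the $SU(2)$ character of $Sym^n(\mathbb{C}^2)$. The cleanest route is induction on $(a,b)$ through the two tensor-product identities $V_{1,0}\otimes V_{a,b}=V_{a+1,b}\oplus\cdots$ and $V_{0,1}\otimes V_{a,b}=V_{a,b+1}\oplus\cdots$, using the Clebsch--Gordan rule on each $H$-factor. The base cases are
\begin{align*}
V_{1,0}|_H&=(1,0)\oplus(0,1),\\
V_{0,1}|_H&=(1,1)\oplus(0,0).
\end{align*}
The alternative is the classical $\mathfrak{so}(5)\supset\mathfrak{so}(4)$ branching (interlacing) rule, translated through $\mathfrak{so}(4)=\mathfrak{sp}(1)\oplus\mathfrak{sp}(1)$.

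A dimension check gives the same count: $\sum_{r,s}(r+s+1)(a+r-s+1)=\dim V_{a,b}$. The pairs $(r+s,\,a+r-s)$ are distinct for distinct $(r,s)$, so the decomposition is multiplicity-free. I expect this step to be the main obstacle, since it is where genuine representation theory enters.

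\emph{Step 2 (matrix coefficients).} For $X\in\mathfrak g$ we have $X U_{v,w}=U_{d\pi(X)v,w}$, so the right-derivative operators act only on the first slot $v$. Since $V_{a,b}=\bigoplus V_{a,b}(p,q)$ is orthogonal for the $G$-invariant (hence $H$-invariant) Hermitian product, the real spans give a direct sum $\mathcal U_{a,b}=\bigoplus\mathcal U_{a,b}(p,q)$. Each summand is invariant under $R_k$ for $k\in K\subset H$. Consequently the $K$-invariant subspace $(\mathcal U_{a,b}\otimes\mathfrak m^*\otimes\mathfrak k)_K$ splits accordingly, because the $K$-condition involves only $R_k$ on the function factor, tensored with a fixed action on $\mathfrak m^*\otimes\mathfrak k$.

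\emph{Step 3 (Casimir values).} On $U_{v,w}$ with $v\in V_{a,b}(p,q)$, the operator $C_H$ acts by the $Sp(1)$-Casimir on $Sym^p(\mathbb{C}^2)$ and $C_K$ by the Casimir on $Sym^q(\mathbb{C}^2)$. With the normalization $X_1=\operatorname{diag}(i,0)$, these are $p(p+2)$ and $q(q+2)$; for example, on $Sym^1$ one gets $-\sum_{i\le 3} d\pi(X_i)^2=3$.

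The operator $C_G$ acts by the scalar $c\,\langle\lambda,\lambda+2\rho\rangle$ with $\lambda=(a+b)L_1+bL_2$ and $\rho=2L_1+L_2$. This gives $c\big((a+b)(a+b+4)+b(b+2)\big)=c\,(a^2+2ab+2b^2+4a+6b)$. I would fix $c=1$ on $V_{1,0}=\mathbb{C}^4$ by a direct computation: $C_H+C_K=3\,\mathrm{Id}$, and each $d\pi(X_i)^2=-\tfrac12\mathrm{Id}$ for $4\le i\le 7$, so $C_G=5$, which matches $a^2+2ab+2b^2+4a+6b$ at $(a,b)=(1,0)$.

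Combining the three values gives $\tilde J_2=2C_G-C_H-2C_K=\lambda_{a,b}(p,q)\,\mathrm{Id}$ on each summand $(\mathcal U_{a,b}(p,q)\otimes\mathfrak m^*\otimes\mathfrak k)_K$, as claimed.
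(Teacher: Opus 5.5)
Your proposal is correct and follows essentially the same route as the paper. Both arguments use three ingredients: the $Sp(1)\times Sp(1)$ branching rule for highest weight $(a+b)L_1+bL_2$, the decomposition of matrix coefficients by their first slot, and the Casimir scalars $C_G=a^2+2ab+2b^2+4a+6b$, $C_H=p(p+2)$, $C_K=q(q+2)$ inserted into $\tilde J_2=2C_G-C_H-2C_K$. The differences are only in how the ingredients are justified. You derive the branching via characters or the $\mathfrak{so}(5)\supset\mathfrak{so}(4)$ interlacing rule, and you fix the Casimir normalization by the direct check $C_G=5$ on $\mathbb{C}^4$. The paper instead cites a branching theorem from the literature and rescales a standard Casimir table by a factor of four.
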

	\begin{proof}
		In the notation of \cite{FEC}, the highest weight of $V_{a,b}$ has coordinates $(h_1,h_2)=(a+b,b)$. Specializing \cite[Theorem 3]{FEC} to $Sp(2)\supset Sp(1)\times Sp(1)$ gives
		\begin{align*}
			V_{a,b}\downarrow_{Sp(1)\times Sp(1)}=\bigoplus_{r=0}^b\bigoplus_{s=0}^aV_{a,b}(r+s,a+r-s),
		\end{align*}
		where for $p=r+s$ and $q=a+r-s$, we have
		\begin{align*}
			V_{a,b}(p,q)\cong Sym^p(\mathbb{C}^2)\otimes Sym^q(\mathbb{C}^2).
		\end{align*}
		The corresponding decomposition of $\mathcal{U}_{a,b}$ follows by decomposing the first slot of its matrix coefficients. With respect to the normalization determined by $\langle\cdot,\cdot\rangle_{alg}$, the Casimir operators defined above are four times those used in \cite[Table 14]{JB}. Hence
		\begin{align*}
			C_G=(a^2+2ab+2b^2+4a+6b)I
		\end{align*}
		on $\mathcal{U}_{a,b}$, while
		\begin{align}\label{C_H,C_K}
			C_H=p(p+2)I,\quad C_K=q(q+2)I
		\end{align}
		on $\mathcal{U}_{a,b}(p,q)$. Combining these identities with $\tilde J_2=(2C_G-C_H-2C_K)\otimes I\otimes I$ gives the result.
	\end{proof}
	For the subsequent calculations, we give an infinitesimal characterization of $K$-equivariance on $C^\infty(G)\otimes\mathfrak{m}^*\otimes\mathfrak{k}$.
	\begin{lemma}\label{K-equivariant equivalence}
		Let $\tilde B=\sum_{i=1}^7\sum_{\alpha=8}^{10}U_{i\alpha}\otimes\xi_i\otimes X_\alpha\in C^\infty(G)\otimes\mathfrak{m}^*\otimes\mathfrak{k}$. Then\\
		(i)\ $\tilde B\in(C^\infty(G)\otimes\mathfrak{m}^*\otimes\mathfrak{k})_K$ is equivalent to
		\begin{align*}
			(X_\beta\otimes I\otimes I)(\tilde B)=(I\otimes(ad^*(X_\beta)\otimes I-I\otimes ad(X_\beta)))(\tilde B)
		\end{align*}
		for any $\beta=8,9,10$.\\
		(ii)\ If $\tilde B\in(C^\infty(G)\otimes\mathfrak{m}^*\otimes\mathfrak{k})_K$, then we have
		\begin{align*}
			(-C_K\otimes I\otimes I)(\tilde B)=(I\otimes\sum_{\beta=8}^{10}(ad^*(X_\beta)\otimes I-I\otimes ad(X_\beta))^2)(\tilde B).
		\end{align*}
		(iii)\ The operators $-C_K\otimes I\otimes I$ and $I\otimes\sum_{\beta=8}^{10}(ad^*(X_\beta)\otimes I-I\otimes ad(X_\beta))^2$ map $(C^\infty(G)\otimes\mathfrak{m}^*\otimes\mathfrak{k})_K$ into itself.
	\end{lemma}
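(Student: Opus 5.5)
To prove (i), I will differentiate the defining condition of $(C^\infty(G)\otimes\mathfrak{m}^*\otimes\mathfrak{k})_K$ along one-parameter subgroups $k=\exp(tX_\beta)$, $\beta=8,9,10$. The condition reads
\begin{align*}
\sum_{i,\alpha}R_{\exp(tX_\beta)}U_{i\alpha}\otimes\xi_i\otimes X_\alpha=\sum_{i,\alpha}U_{i\alpha}\otimes Ad^*(\exp(tX_\beta))\xi_i\otimes Ad(\exp(-tX_\beta))X_\alpha .
\end{align*}
At $t=0$, the derivative of the left side is $(X_\beta\otimes I\otimes I)\tilde B$, because $\frac{d}{dt}|_{t=0}U(g\exp(tX_\beta))=X_\beta(U)(g)$. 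On the right side the derivative is $(I\otimes(ad^*(X_\beta)\otimes I-I\otimes ad(X_\beta)))\tilde B$, where $ad^*(X)\xi:=\xi\circ ad(X)$ is the differential of $Ad^*$ restricted to $\mathfrak{m}$. This gives the forward implication.

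For the converse, let $\rho(k)=R_k\otimes Ad^*(k^{-1})\otimes Ad(k)$ on $C^\infty(G)\otimes\mathfrak{m}^*\otimes\mathfrak{k}$. Note that $Ad^*(k)$ as defined is an anti-representation, so $k\mapsto Ad^*(k^{-1})$ is a representation. Since $R_k$ is also a representation (via $R_{k_1k_2}=R_{k_1}R_{k_2}$ on right translates), $\rho$ is a representation of $K$. The $K$-equivariance condition is exactly $\rho(k)\tilde B=\tilde B$ for all $k$. Its infinitesimal generator is $d\rho(X_\beta)=X_\beta\otimes I\otimes I-I\otimes(ad^*(X_\beta)\otimes I-I\otimes ad(X_\beta))$, and (i) says $d\rho(X_\beta)\tilde B=0$. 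Then $\frac{d}{dt}\rho(\exp tX_\beta)\tilde B=\rho(\exp tX_\beta)d\rho(X_\beta)\tilde B=0$, so $\tilde B$ is fixed by every one-parameter subgroup. Since $K\cong Sp(1)$ is connected, it is generated by $\exp(\mathfrak{k})$, and invariance under all of $K$ follows.

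For (ii), write $A_\beta=I\otimes(ad^*(X_\beta)\otimes I-I\otimes ad(X_\beta))$ and $L_\beta=X_\beta\otimes I\otimes I$. These commute, since they act on different tensor factors. Applying (i) twice gives $L_\beta^2\tilde B=L_\beta A_\beta\tilde B=A_\beta L_\beta\tilde B=A_\beta^2\tilde B$. Summing over $\beta$ and using $C_K=-\sum_\beta X_\beta^2$ yields $-C_K\otimes I\otimes I\,\tilde B=\sum_\beta A_\beta^2\tilde B$.

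For (iii), the invariant subspace is the fixed space of $\rho$. It suffices to show that both operators commute with $\rho(k)$.
\begin{itemize}
\item The Casimir $C_K$ is $Ad(K)$-invariant, so it commutes with every $R_k$, and hence $C_K\otimes I\otimes I$ commutes with $\rho(k)$.
\item The operator $\sum_\beta A_\beta^2$ is, up to sign, the Casimir of the representation $k\mapsto Ad^*(k^{-1})\otimes Ad(k)$ on $\mathfrak{m}^*\otimes\mathfrak{k}$, built from the orthonormal basis $\{X_\beta\}$ of $\mathfrak{k}$. It therefore commutes with that representation, and it trivially commutes with $R_k\otimes I\otimes I$.
\end{itemize}
Consequently both operators map the fixed space into itself.

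The only delicate point is the sign bookkeeping between $Ad^*(k)$, which is an anti-homomorphism, and $R_k$, so that $\rho$ really is a representation. This should be checked carefully against the convention fixed in Lemma \ref{lem: isomorphic T^*G/K}.
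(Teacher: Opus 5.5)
Your proof is correct. Parts (i) and (ii) essentially follow the paper. Part (iii) takes a genuinely different route.

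\textbf{Part (i).} You write the condition as $\rho(k)\tilde B=\tilde B$ with $\rho(k)=R_k\otimes Ad^*(k^{-1})\otimes Ad(k)$, and show that $t\mapsto\rho(\exp tX)\tilde B$ is constant. This is the paper's argument that two solutions of the same first-order ODE with equal initial data coincide. The sign bookkeeping you flagged does check out: $R$ and $k\mapsto Ad^*(k^{-1})$ are both homomorphisms, and $d\rho(X)=X\otimes I\otimes I-I\otimes(ad^*(X)\otimes I-I\otimes ad(X))$. One small addition is needed. The hypothesis covers only $\beta=8,9,10$, so you must use linearity of $X\mapsto d\rho(X)$ to get $d\rho(X)\tilde B=0$ for all $X\in\mathfrak{k}$. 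Only then can you use that $\exp(\mathfrak{k})$ generates $K$; the paper states this step explicitly.

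\textbf{Part (iii).} The paper works infinitesimally.
\begin{itemize}
\item It checks from the bracket table that $\sum_\alpha X_\alpha\circ[X_\alpha,X_\beta]+[X_\alpha,X_\beta]\circ X_\alpha=0$, so $C_K$ commutes with each $X_\beta$.
\item It then shows that $-C_K\tilde B$ again satisfies the criterion of (i).
\item Only $-C_K$ is treated explicitly. The other operator is covered because, by (ii), it agrees with $-C_K$ on the invariant space.
\end{itemize}
You work at the group level instead. From $R_k\circ X=(Ad(k)X)\circ R_k$ and the orthonormal-basis independence of $\sum_\alpha X_\alpha^2$, you get that $C_K$ commutes with $R_k$. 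You then identify $\sum_\beta A_\beta^2$ as a Casimir of $k\mapsto Ad^*(k^{-1})\otimes Ad(k)$. Hence both operators commute with $\rho(k)$ and preserve its fixed space.

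\textbf{Comparison.} Your route needs no computation and treats both operators symmetrically, without relying on (ii). It also works verbatim for any compact connected $K$ with an $Ad$-invariant inner product. The paper's route avoids the group-level intertwining identity and reuses (i) as the membership test, at the cost of a structure-constant check specific to this basis.
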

	\begin{proof}
		(i)\ Substituting $k=exp(tX_\beta)$ into the $K$-equivariance condition for any $\beta=8,9,10$ and small $t\in\mathbb{R}$, we obtain
		\begin{align}\label{K-equivariance on U_{a,b}}
			\sum_{i=1}^7\sum_{\alpha=8}^{10}U_{i\alpha}(exp(tX_\beta))\otimes\xi_i\otimes X_\alpha=\sum_{i=1}^7\sum_{\alpha=8}^{10}U_{i\alpha}(e)\otimes Ad^*(exp(tX_\beta))\xi_i\otimes Ad(exp(-tX_\beta))X_\alpha
		\end{align}
		for any $8\le\beta\le10$. Differentiating with respect to $t$ at $t=0$, we obtain
		\begin{align*}
			X_\beta\otimes I\otimes I=I\otimes(ad^*(X_\beta)\otimes I-I\otimes ad(X_\beta))
		\end{align*}
		for any $X_\beta\in\mathfrak{k}$.
		
		Conversely, suppose that these identities hold for $8\le\beta\le10$. They then hold for every $X\in\mathfrak{k}$. For fixed $g\in G$ and $X\in\mathfrak{k}$, the maps
		\begin{align*}
			t\mapsto\tilde B(g\exp(tX)),\qquad t\mapsto(Ad^*(\exp(tX))\otimes Ad(\exp(-tX)))\tilde B(g)
		\end{align*}
		satisfy the same first-order differential equation and have the same initial value. Hence they are equal. Since $K$ is connected and generated by its one-parameter subgroups, the $K$-equivariance of $\tilde B$ follows.\\
		(ii)\ Applying $X_\beta\otimes I\otimes I$ once more to the identity in part (i), and using the fact that operators acting on different tensor factors commute, gives
		\begin{align*}
			(X_\beta^2\otimes I\otimes I)(\tilde B)=\left(I\otimes(ad^*(X_\beta)\otimes I-I\otimes ad(X_\beta))^2\right)(\tilde B).
		\end{align*}
		Summing over $\beta$ and using $C_K=-\sum_{\beta=8}^{10}X_\beta^2$ proves (ii).\\
		(iii)\ Using Table \ref{[X_alpha,X_beta]}, we can calculate that
		\begin{align*}
			\sum_{\alpha=8}^{10}X_\alpha\circ[X_\alpha,X_\beta]+[X_\alpha,X_\beta]\circ X_\alpha=0
		\end{align*}
		for any $8\le\beta\le10$, and thus
		\begin{align*}
			-C_K\circ X_\beta=X_\beta\circ(-C_k)+\sum_{\alpha=8}^{10}X_\alpha\circ[X_\alpha,X_\beta]+[X_\alpha,X_\beta]\circ X_\alpha=X_\beta\circ(-C_K).
		\end{align*}
		Then we have
		\begin{align*}
			&(X_\beta\otimes I\otimes I)\circ(-C_K\otimes I\otimes I)\\
			=&(-C_K\otimes I\otimes I)\circ(X_\beta\otimes I\otimes I)\\
			=&(-C_K\otimes I\otimes I)\circ(I\otimes(ad^*(X_\beta)\otimes I-I\otimes ad(X_\beta)))\\
			=&(I\otimes(ad^*(X_\beta)\otimes I-I\otimes ad(X_\beta)))\circ(-C_K\otimes I\otimes I)
		\end{align*}
		for any $8\le\beta\le10$. Hence $-(C_K\otimes I)$ maps $(C^\infty(G)\otimes\mathfrak{m}^*\otimes\mathfrak{k})_K$ to itself.
	\end{proof}
	By Lemma \ref{eigenvalue of J2}, $\tilde J_2$ acts by scalar multiplication on each $(\mathcal{U}_{a,b}(p,q)\otimes\mathfrak{m}^*\otimes\mathfrak{k})_K$. Since $\tilde J_2$ and $\tilde J_0$ commute, they can be diagonalized simultaneously. The $K$-equivariance condition gives the following relation between the eigenvalue of $\tilde J_0$ and $q$.
	\begin{lemma}\label{lambda and q}
		Suppose that $0\ne\tilde B=\sum_{i=1}^7\sum_{\alpha=8}^{10}U_{i\alpha}\otimes\xi_i\otimes X_\alpha\in(\mathcal{U}_{a,b}(p,q)\otimes\mathfrak{m}^*\otimes\mathfrak{k})_K$ is an eigenvector of both $\tilde J_0$ and $\tilde J_2$ and the corresponding eigenvalue of $\tilde J_0$ is $\lambda$. Then
		\begin{align*}
			(\lambda,q)=(12,2)\ or\ (17,3)\ or\ (-7,1).
		\end{align*}
	\end{lemma}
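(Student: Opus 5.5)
The plan is to read off $q$ from the action of the Casimir $C_K$ in two ways: on the function factor, where Lemma \ref{eigenvalue of J2} fixes it, and on the fiber factor $\mathfrak m^*\otimes\mathfrak k$, where the $\tilde J_0$-eigenvalue $\lambda$ fixes it. Lemma \ref{K-equivariant equivalence}(ii) ties the two together.

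\emph{Function side.} For $\tilde B\in(\mathcal U_{a,b}(p,q)\otimes\mathfrak m^*\otimes\mathfrak k)_K$, \eqref{C_H,C_K} gives $C_K\otimes I\otimes I=q(q+2)$ on $\tilde B$. Lemma \ref{K-equivariant equivalence}(ii) then yields
\begin{align*}
q(q+2)\tilde B=(I\otimes\mathcal C)(\tilde B),\qquad \mathcal C:=-\sum_{\beta=8}^{10}\bigl(ad^*(X_\beta)\otimes I-I\otimes ad(X_\beta)\bigr)^2 .
\end{align*}
Here $\mathcal C$ is the Casimir of $\mathfrak k$ acting on the fiber representation $\mathfrak m^*\otimes\mathfrak k$ of $K\cong Sp(1)$, with the same normalization as $C_K$. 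So the lemma reduces to showing that $\mathcal C$ acts on each $\tilde J_0$-eigenspace by a scalar $c(\lambda)$, with $c(12)=8$, $c(17)=15$ and $c(-7)=3$. Once this holds, the eigenvector condition gives $\bigl(q(q+2)-c(\lambda)\bigr)\tilde B=0$. Since $\tilde B\neq0$ and $q\ge0$, this forces $q=2,3,1$ respectively.

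\emph{Fiber side.} I would decompose $\mathfrak m^*\otimes\mathfrak k$ under $K$.
\begin{itemize}
\item $Ad(\operatorname{diag}(1,q))$ fixes $\mathfrak m_1$ pointwise.
\item It acts on $\mathfrak m_2\cong\mathbb H$ by quaternion multiplication by $q$, which is the spin-$\tfrac12$ representation $Sym^1(\mathbb C^2)$ regarded as a real representation.
\item It acts on $\mathfrak k$ by the adjoint representation $Sym^2$.
\end{itemize}
Hence $\mathfrak m_1^*\otimes\mathfrak k$ is three copies of the $q=2$ representation, and $\mathfrak m_2^*\otimes\mathfrak k$ is the real form of $Sym^1\otimes Sym^2=Sym^3\oplus Sym^1$. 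These two summands have real dimensions $8$ and $4$.

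For the normalization, the elements $X_\alpha$ have $\langle\cdot,\cdot\rangle_{alg}$-norm $1$, and $ad(X_8)$ has eigenvalues $0,\pm2i$ on $\mathfrak k$. This gives $\mathcal C=8=2\cdot4$ on $\mathfrak k$, so $\mathcal C$ acts by $q(q+2)$ on the $Sym^q$-isotypic part, consistently with \eqref{C_H,C_K}.

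\emph{Matching with $\tilde J_0$.} The operator $\tilde J_0|_e$ is built from $K$-invariant brackets, so it commutes with the $K$-action and its eigenspaces are $K$-invariant. By Lemma \ref{eigenvalue of J0}, the $12$-eigenspace is exactly $\mathfrak m_1^*\otimes\mathfrak k$, on which $c=8$. The $-7$-eigenspace is a $4$-dimensional $K$-invariant subspace of $\mathfrak m_2^*\otimes\mathfrak k$. The $Sym^3$ summand is of quaternionic type, so its real form is irreducible of real dimension $8$. Consequently the only $4$-dimensional invariant subspace is the $Sym^1$-isotypic part, where $c=3$. The $8$-dimensional complement, which is the $17$-eigenspace, is then the $Sym^3$ part, where $c=15$.

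\emph{Main obstacle.} The delicate step is this identification of the $-7$-eigenspace with the spin-$\tfrac12$ component, including the consistency of normalizations. As a safeguard I would check it directly: compute $\mathcal C(F_1)$ with Tables \ref{[X_i,X_j]} and \ref{[X_alpha,X_beta]} and verify $\mathcal C F_i=3F_i$. Then $\mathcal C=15$ on the complement follows from the trace, since $\operatorname{tr}\mathcal C=3\cdot3\cdot8+8\cdot15+4\cdot3$ must match $\sum_\beta\operatorname{tr}(\cdot)^2$ computed directly.
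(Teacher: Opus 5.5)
Your argument is correct, but it reaches the values of $q$ by a different route from the paper's.

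\emph{Common start.} Both proofs begin with Lemma \ref{K-equivariant equivalence}(ii) and \eqref{C_H,C_K}. This reduces the claim to computing the fiber Casimir $\mathcal C$ on each $\tilde J_0$-eigenspace.

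\emph{The paper's route.} The paper computes $\mathcal C$ directly by expanding the square into three pieces:
\begin{itemize}
\item the term $\sum_\beta ad^*(X_\beta)^2\otimes I$, which is $0$ on $\mathfrak m_1^*$ and $-3$ on $\mathfrak m_2^*$;
\item the term $\sum_\beta I\otimes ad(X_\beta)^2=-8$;
\item the mixed term $-2\sum_\beta ad^*(X_\beta)\otimes ad(X_\beta)$, which it shows equals $-\frac12 T$, where $T$ is the curvature endomorphism from the proof of Lemma \ref{eigenvalue of J0}.
\end{itemize}
Hence $\mathcal C=11+\frac12 T$ on $\mathfrak m_2^*\otimes\mathfrak k$. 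This is an affine function of $\tilde J_0=9+T$, so the eigenvalues $8$ and $-16$ of $T$ give $\mathcal C=15$ and $\mathcal C=3$ immediately. No decomposition or dimension count is needed.

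\emph{Your route.} You argue by Schur-type reasoning instead. Since $\tilde J_0|_e=12P_1+9P_2+T$ commutes with $K$, its eigenspaces are $K$-invariant. You decompose $\mathfrak m_2^*\otimes\mathfrak k$ into $Sym^3\oplus Sym^1$, viewed as real representations of dimensions $8$ and $4$. The multiplicities $8$ and $4$ from Lemma \ref{eigenvalue of J0} then force the $17$- and $-7$-eigenspaces onto these two summands. Your normalization check on $\mathfrak k$ gives $\mathcal C=q(q+2)$ on each piece.

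\emph{Comparison.} Your approach is more conceptual. It explains why exactly $q=1,2,3$ occur, and it shows a posteriori that $T$ is a function of the $K$-Casimir. In exchange, it uses the eigenspace multiplicities, not just the eigenvalues. It also rests on two facts you should state explicitly:
\begin{itemize}
\item $T$ is $Ad(K)$-equivariant, because it is a contraction of $[\cdot,\cdot]_{\mathfrak k}$ over an $Ad(K)$-orthonormal basis of $\mathfrak m_2$.
\item The underlying real representations of $Sym^3$ and $Sym^1$ are irreducible and mutually non-isomorphic.
\end{itemize}

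\emph{A terminological correction.} $Sym^1$, $Sym^3$ and $Sym^1\otimes Sym^2$ are of quaternionic type, so they have no real forms. What you mean is these complex representations regarded as real ones. The real $8$-dimensional piece is irreducible precisely because $Sym^3$ admits no real structure.

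Finally, the safeguard you propose, checking $\mathcal C F_i=3F_i$ directly, is essentially the paper's computation restricted to a single eigenvalue.
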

	\begin{proof}
		Applying Lemma \ref{K-equivariant equivalence}, we have
		\begin{align*}
			&-(C_K\otimes I)(\tilde B)(e)\\
			=&\sum_{i=1}^7\sum_{\alpha,\beta=8}^{10}(U_{i\alpha}(e)\otimes ad^*(X_\beta)^2\xi_i\otimes X_\alpha+U_{i\alpha}(e)\otimes\xi_i\otimes ad(X_\beta)^2X_\alpha-2U_{i\alpha}(e)\otimes ad^*(X_\beta)\xi_i\otimes ad(X_\beta)X_\alpha).
		\end{align*}
		By \eqref{C_H,C_K}, the left-hand side is $-q(q+2)\tilde B$.
		
		Using the identity $ad^*(X_\beta)\xi_i=\langle X_\beta,[X_j,X_i]\rangle_{alg}\xi_j$ and Table \ref{[X_i,X_j]}, we obtain $ad^*(X_\beta)\xi_i=0$ for $1\le i\le3$ and the following table.
		\begin{table}[H]
			\centering
			\captionsetup{labelsep=colon,skip=6pt}
			\caption{$\mathbf{ad^*(X_\beta)\xi_i}$}
			\label{ad^*(X_beta)xi_i}
			\(\begin{array}{|c|c|c|c|c|}
				\hline
				ad^*(X_\beta)\xi_i&i=4&i=5&i=6&i=7\\
				\hline
				\beta=8&\xi_5&-\xi_4&-\xi_7&\xi_6\\
				\hline
				\beta=9&\xi_6&\xi_7&-\xi_4&-\xi_5\\
				\hline
				\beta=10&\xi_7&-\xi_6&\xi_5&-\xi_4\\
				\hline
			\end{array}\)
		\end{table}
		Moreover, using Table \ref{[X_alpha,X_beta]}, we can prove that
		\begin{align*}
			\sum_{\beta=8}^{10}ad(X_\beta)^2X_\alpha=-8X_\alpha
		\end{align*}
		for any $\alpha=8,9,10$. By Lemma \ref{eigenvalue of J0}, either $\lambda=12$ and $\tilde B\in(C^\infty(G)\otimes\mathfrak{m}_1^*\otimes\mathfrak{k})_K$, or $\lambda\in\{17,-7\}$ and $\tilde B\in(C^\infty(G)\otimes\mathfrak{m}_2^*\otimes\mathfrak{k})_K$.\\
		\emph{Case 1: $\tilde B$ has the form $\sum_{i=1}^3\sum_{\alpha=8}^{10}U_{i\alpha}\otimes\xi_i\otimes X_\alpha$.}\ For any $1\le i\le3$, we have $ad^*(X_\beta)\xi_i=0$ and thus
		\begin{align*}
			&\sum_{\beta=8}^{10}U_{i\alpha}(e)\otimes ad^*(X_\beta)^2\xi_i\otimes X_\alpha+U_{i\alpha}(e)\otimes\xi_i\otimes ad(X_\beta)^2X_\alpha+2U_{i\alpha}\otimes ad^*(X_\beta)\xi_i\otimes ad(X_\beta)X_\alpha\\
			=&-8\tilde B(e).
		\end{align*}
		Hence $-q(q+2)=-8$, that is, $q=2$ or $-4$. In this case, $q=2$ and the corresponding eigenvalue of $\tilde J_0$ is $12$ (since $q\ge0$).\\
		\emph{Case 2: $\tilde B$ has the form $\sum_{i=4}^7U_{i\alpha}\otimes\xi_i\otimes X_\alpha$.}\ By direct calculation and using Table \ref{ad^*(X_beta)xi_i}, we have
		\begin{align*}
			\sum_{\beta=8}^{10}ad^*(X_\beta)^2\xi_i=-3\xi_i
		\end{align*}
		for any $4\le i\le7$. Note that
		\begin{align*}
			&2U_{i\alpha}(e)\otimes ad^*(X_\beta)\xi_i\otimes ad(X_\beta)X_\alpha\\
			=&2\sum_{i,j=4}^7U_{i\alpha}\otimes\xi_j\otimes[[X_j,X_i]_{\mathfrak{k}},X_\alpha]\\
			=&\frac12(I\otimes T)(U_{i\alpha}\otimes\xi_i\otimes X_\alpha),
		\end{align*}
		where the operator $T$ is defined in Lemma \ref{eigenvalue of J0} and the eigenvalue of $T$ is $8$ or $-16$. Recall that $\tilde J_0=9I+I\otimes T$ on this subspace. If the eigenvalue of $\tilde J_0$ is 17, then the eigenvalue of $T$ is 8, and thus $-q(q+2)=-15$, which implies $q=3$. If the eigenvalue of $\tilde J_0$ is -7, then the eigenvalue of $T$ is $-16$, and thus $-q(q+2)=-3$, which implies $q=1$.
	\end{proof}
	We now derive a lower bound for $\tilde J$ by estimating its first-order term $\tilde J_1$.
	\begin{lemma}\label{eigenvalue of J1}
		Equip $C^\infty(G)\otimes\mathfrak{m}^*\otimes\mathfrak{k}$ with the inner product defined on pure tensors by
		\begin{align*}
			\langle U\otimes\xi\otimes X,U'\otimes\xi'\otimes X'\rangle:=\int_GUU'dV\cdot\langle\xi,\xi'\rangle\cdot\langle X,X'\rangle,
		\end{align*}
		and denote the induced norm by $\|\cdot\|$. Then, in the sense of quadratic forms,
		\begin{align*}
			\tilde J\ge\frac12\tilde J_2+\tilde J_0-12P_1-6P_2,
		\end{align*}
		where $P_i:(C^\infty(G)\otimes\mathfrak{m}^*\otimes\mathfrak{k})_K\to(C^\infty(G)\otimes\mathfrak{m}_i^*\otimes\mathfrak{k})_K$ denotes the orthogonal projection for $i=1,2$.
	\end{lemma}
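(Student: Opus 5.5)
The plan is to bound the first-order term $\tilde J_1$ by Cauchy--Schwarz against $\frac12\tilde J_2$ plus a zero-order multiple of the projections. Write $\tilde J=\tilde J_2+\tilde J_1+\tilde J_0$. It then suffices to show
\begin{align*}
\langle\tilde J_1\tilde B,\tilde B\rangle\ge-\tfrac12\langle\tilde J_2\tilde B,\tilde B\rangle-12\|P_1\tilde B\|^2-6\|P_2\tilde B\|^2
\end{align*}
for every $K$-equivariant $\tilde B$. Since the vector fields $X_i$ are left-invariant and the measure is bi-invariant, each $X_i$ is skew-adjoint on $L^2(G)$. Hence $\tilde J_2$ is the nonnegative form
\begin{align*}
\langle\tilde J_2\tilde B,\tilde B\rangle=\sum_{i=1}^3\|X_i\tilde B\|^2+2\sum_{i=4}^7\|X_i\tilde B\|^2.
\end{align*}
Here $X_i$ acts on the $C^\infty(G)$ factor and the norms are taken in the stated inner product. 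Although that inner product uses $\langle\xi,\xi'\rangle$, the round metric dual to $\langle\ ,\ \rangle$, I would first check how it relates to the $alg$-normalization on $\mathfrak m_2^*$. Any rescaling on $\mathfrak m_2^*$ only changes constants, and I would track it explicitly.

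Next I rewrite $\tilde J_1$ as $\sum_{i=1}^7 X_i\otimes A_i$, where $A_i$ is the endomorphism of $\mathfrak m^*\otimes\mathfrak k$ read off from the formula: for $i\le 3$, $A_i$ sends $\xi_j\mapsto\sum_{k\le3}\langle X_j,[X_i,X_k]\rangle_{alg}\xi_k$. For $i\ge4$ it has two pieces, the $\mathfrak m_2\to\mathfrak m_1$ block with coefficient $4$ and the $\mathfrak m_2\to\mathfrak m_2$ block with coefficient $2$. By Table \ref{[X_i,X_j]}, each $A_i$ is a partial isometry up to an explicit constant between the blocks $\mathfrak m_1^*\otimes\mathfrak k$ and $\mathfrak m_2^*\otimes\mathfrak k$. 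Then Cauchy--Schwarz with weights gives, for each $i\le 3$ and any $\epsilon_i>0$,
\begin{align*}
|\langle X_i\tilde B, A_i^T\tilde B\rangle|\le\tfrac12\|X_i\tilde B\|^2+\tfrac12\|A_i^T\tilde B\|^2,
\end{align*}
and for $4\le i\le 7$,
\begin{align*}
|\langle X_i\tilde B,A_i^T\tilde B\rangle|\le\|X_i\tilde B\|^2+\tfrac14\|A_i^T\tilde B\|^2.
\end{align*}
These weights are chosen so that the derivative parts sum to exactly $\frac12\tilde J_2$.

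It then remains to compute the zero-order operator $\frac12\sum_{i\le3}A_iA_i^T+\frac14\sum_{i\ge4}A_iA_i^T$. I would split it into blocks on $\mathfrak m_1^*\otimes\mathfrak k$ and $\mathfrak m_2^*\otimes\mathfrak k$, and check from the bracket table that it equals $12P_1+6P_2$, or is at most that. The $\mathfrak k$ factor is untouched by the $A_i$, so this is a $7\times7$ computation. The ingredients are that $\sum_{i\le3}$ of the squared $\mathfrak m_1$-structure constants gives $8$ on $\mathfrak m_1$, together with the analogous sums recorded in Lemma \ref{eigenvalue of J0} ($-4$, $-3$). Cross terms between different blocks vanish by the block structure of the brackets.

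The main obstacle is getting the constants to come out exactly as $12$ and $6$. The factors $2$ and $4$ in $\tilde J_1$ interact with the differing normalizations of $\xi_i$ on $\mathfrak m_1$ versus $\mathfrak m_2$, so a careless weighting gives a weaker bound. If the uniform weights above fail to give $12P_1+6P_2$, I would first try separate Cauchy--Schwarz weights for the $\mathfrak m_2\to\mathfrak m_1$ and $\mathfrak m_2\to\mathfrak m_2$ pieces of $A_i$ ($i\ge4$), keeping the total derivative coefficient equal to $\frac12\tilde J_2$, and optimize the two parameters. $K$-equivariance is not needed for the inequality itself beyond ensuring that $P_1,P_2$ preserve the space. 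That holds because $\mathfrak m_1,\mathfrak m_2$ are $\operatorname{Ad}(K)$-invariant.
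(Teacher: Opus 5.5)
Your proposal is correct and essentially matches the paper's proof. The paper bounds $|\langle\tilde J_1\tilde B,\tilde B\rangle|$ by a single Cauchy--Schwarz followed by $\sqrt{ab}\le\frac12(a+b)$, which is exactly your termwise Young inequalities with weights $\frac12$ (for $i\le3$) and $1,\frac14$ (for $i\ge4$); with the round-dual Gram matrix $\mathrm{diag}(I_3,2I_4)$ on $\mathfrak m^*$, the zero-order remainder is exactly $12P_1+6P_2$, and since your $A_i^T$ must mean the adjoint for that metric, under which each $A_i$ is skew-adjoint, it makes no difference whether you move $A_i$ (as you do) or $X_i$ by integration by parts (as the paper does).
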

	\begin{proof}
		By the global formula for $\tilde J_1$ obtained above, we have
		\begin{align*}
			\tilde J_1=\sum_{i=1}^3X_i\otimes\tilde J_{1i}\otimes I+\sum_{i=4}^7\sqrt2X_i\otimes\tilde J_{1i}\otimes I
		\end{align*}
		where
		\begin{align*}
			\tilde J_{1i}(\xi_j)=\left\{
			\begin{array}{ll}
				\sum_{k=1}^3\langle X_j,[X_i,X_k]\rangle_{alg}\xi_k,&1\le i\le 3,\\
				2\sqrt2\sum_{k=1}^3\langle X_j,[X_i,X_k]\rangle_{alg}\xi_k+\sqrt2\sum_{k=4}^7\langle X_j,[X_i,X_k]\rangle_{alg}\xi_k,&4\le i\le7.
			\end{array}
			\right.
		\end{align*}
		Using Table \ref{[X_i,X_j]}, we obtain the following table.
		\begin{table}[H]
			\centering
			\captionsetup{labelsep=colon,skip=6pt}
			\caption{$\mathbf{\tilde J_{1i}(\xi_j)}$}
			\label{J_{1i}(xi_j)}
			\(\begin{array}{|c|c|c|c|c|c|c|c|}
				\hline
				\tilde J_{1i}(\xi_j)&i=1&i=2&i=3&i=4&i=5&i=6&i=7\\
				\hline
				j=1&0&2\xi_3&-2\xi_2&\sqrt2\xi_5&-\sqrt2\xi_4&\sqrt2\xi_7&-\sqrt2\xi_6\\
				\hline
				j=2&-2\xi_3&0&2\xi_1&\sqrt2\xi_6&-\sqrt2\xi_7&-\sqrt2\xi_4&\sqrt2\xi_5\\
				\hline
				j=3&2\xi_2&-2\xi_1&0&\sqrt2\xi_7&\sqrt2\xi_6&-\sqrt2\xi_5&-\sqrt2\xi_4\\
				\hline
				j=4&0&0&0&0&2\sqrt2\xi_1&2\sqrt2\xi_2&2\sqrt2\xi_3\\
				\hline
				j=5&0&0&0&-2\sqrt2\xi_1&0&2\sqrt2\xi_3&-2\sqrt2\xi_2\\
				\hline
				j=6&0&0&0&-2\sqrt2\xi_2&-2\sqrt2\xi_3&0&2\sqrt2\xi_1\\
				\hline
				j=7&0&0&0&-2\sqrt2\xi_3&2\sqrt2\xi_2&-2\sqrt2\xi_1&0\\
				\hline
			\end{array}\)
		\end{table}
		The operators $\tilde J_{11},\dots,\tilde J_{17}$ are represented by $7\times7$ matrices satisfying
		\begin{align*}
			&\sum_{i=1}^3\tilde J_{1i}^Tdiag(I_3,2I_4)\tilde J_{1i}=diag(8I_3,0),\\
			&\sum_{i=4}^7\tilde J_{1i}^Tdiag(I_3,2I_4)\tilde J_{1i}=diag(16I_3,24I_4),
		\end{align*}
		where $diag(I_3,2I_4)$ is the Gram matrix of $\langle\xi_i,\xi_j\rangle$ with respect to the basis $\{\xi_1,\dots,\xi_7\}$. Let $\tilde B=\sum_{j=1}^7\sum_{\alpha=8}^{10}U_{j\alpha}\otimes\xi_j\otimes X_\alpha$. The preceding matrix identities imply that
		\begin{align*}
			\sum_{i=1}^7\|(I\otimes\tilde J_{1i}\otimes I)B\|^2=24\|P_1B\|^2+12\|P_2B\|^2.
		\end{align*}
		Since each $X_i$ is skew-adjoint on $L^2(G)$, integration by parts and the Cauchy--Schwarz inequality give
		\begin{align*}
			|\langle\tilde J_1B,B\rangle|&\le\left(\sum_{i=1}^3\|(X_i\otimes I\otimes I)B\|^2+\sum_{i=4}^7\|(\sqrt2X_i\otimes I\otimes I)B\|^2\right)^{\frac12}\left(\sum_{i=1}^7\|(I\otimes\tilde J_{1i}\otimes I)B\|^2\right)^{\frac12}\\
			&=\langle\tilde J_2B,B\rangle^{\frac12}\left(24\|P_1B\|^2+12\|P_2B\|^2\right)^{\frac12}\\
			&\le\frac12\langle\tilde J_2B,B\rangle+12\|P_1B\|^2+6\|P_2B\|^2.
		\end{align*}
		Consequently,
		\begin{align*}
			\langle\tilde JB,B\rangle\ge\frac12\langle\tilde J_2B,B\rangle+\langle\tilde J_0B,B\rangle-12\|P_1B\|^2-6\|P_2B\|^2,
		\end{align*}
		which proves the result.
	\end{proof}
	Since $\tilde J$ is $G$-equivariant, Schur's lemma \cite[Theore 3.5]{BF} implies that the subspace 
	\begin{align*}
		(\mathcal{U}_{a,b}\otimes\mathfrak{m}^*\otimes\mathfrak{k})_K
	\end{align*}
	is invariant under $\tilde J$ for any $a,b\in\mathbb{N}$. We now identify the pairs $(a,b)$ for which $\tilde J$ is positive definite on the corresponding invariant subspace.
	\begin{proposition}
		If
		\begin{align*}
			(a,b)\notin\{(1,0),(0,1),(1,1),(2,0)\},
		\end{align*}
		then $\tilde J$ is positive definite on $(\mathcal{U}_{a,b}\otimes\mathfrak{m}^*\otimes\mathfrak{k})_K$.
	\end{proposition}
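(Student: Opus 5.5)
The plan is to combine the quadratic-form lower bound of Lemma \ref{eigenvalue of J1} with the spectral information in Lemmas \ref{eigenvalue of J0}, \ref{eigenvalue of J2} and \ref{lambda and q}. The operators $\tilde J_2$, $\tilde J_0$, $P_1$ and $P_2$ are self-adjoint for the $L^2$ inner product. They all commute and preserve $(\mathcal{U}_{a,b}\otimes\mathfrak{m}^*\otimes\mathfrak{k})_K$:
\begin{itemize}
\item $P_1$ and $P_2$ are fiberwise, and they project onto sums of $\tilde J_0$-eigenspaces;
\item $\tilde J_2$ acts only on the $C^\infty(G)$ factor and preserves $K$-equivariance, by Lemma \ref{K-equivariant equivalence} and its analogues for $C_G$ and $C_H$.
\end{itemize}
So I decompose any $\tilde B$ in this finite-dimensional space orthogonally into joint eigenvectors of $\tilde J_2$ and $\tilde J_0$. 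On each such piece, $\frac12\tilde J_2+\tilde J_0-12P_1-6P_2$ is a scalar, and the cross terms vanish. Positivity of $\tilde J$ therefore follows once every such scalar is positive.

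By Lemma \ref{lambda and q}, a nonzero joint eigenvector in $(\mathcal{U}_{a,b}(p,q)\otimes\mathfrak{m}^*\otimes\mathfrak{k})_K$ falls into one of three types, with the following lower bounds:
\begin{itemize}
\item $\lambda=12$, $q=2$, lying in the $\mathfrak{m}_1^*$ part: the bound is $\frac12\lambda_{a,b}(p,2)+12-12=\frac12\lambda_{a,b}(p,2)$;
\item $\lambda=17$, $q=3$, lying in $\mathfrak{m}_2^*$: the bound is $\frac12\lambda_{a,b}(p,3)+11$;
\item $\lambda=-7$, $q=1$, lying in $\mathfrak{m}_2^*$: the bound is $\frac12\lambda_{a,b}(p,1)-13$.
\end{itemize}
It thus suffices to show, for $(a,b)$ outside the four exceptional pairs, that
\begin{align*}
\lambda_{a,b}(p,2)>0,\qquad \lambda_{a,b}(p,3)>-22,\qquad \lambda_{a,b}(p,1)>26
\end{align*}
whenever $(p,q)$ actually occurs in the branching of Lemma \ref{eigenvalue of J2}. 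If $q$ does not occur, the corresponding piece is zero.

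From $p=r+s$ and $q=a+r-s$ with $0\le r\le b$ and $0\le s\le a$, we get $p=q-a+2s\le q+a$ and $p=a+2r-q\le a+2b-q$. Hence
\begin{align*}
p\le p_{\max}(q):=\min(a+q,\,a+2b-q),
\end{align*}
and $q$ occurs only when $q\le a+2b$ and $q\ge a-\ldots$ is compatible with these ranges. Since $\lambda_{a,b}(p,q)$ decreases in $p$, I plug in $p=p_{\max}(q)$ with $C_G=a^2+2ab+2b^2+4a+6b$. The resulting expressions are quadratic in $(a,b)$, with dominant part $2C_G-(a+\min(q,2b-q))^2\ge a^2+\ldots$. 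Using crude bounds such as $p\le a+q$, I show they are increasing in $a$ and $b$ beyond small values. This leaves a finite list of small $(a,b)$ to check by hand. Sample checks:
\begin{itemize}
\item $(0,2)$, $q=1$: $p\le1$ and $C_G=20$, so $\lambda=40-3-6=31>26$.
\item $(3,0)$, $q=1$: $p=2$ and $C_G=21$, so $\lambda=42-8-6=28>26$.
\item $(0,1)$ fails ($\lambda=7$), consistent with its exclusion.
\end{itemize}
The trivial representation $(0,0)$ has only $p=q=0$, so no admissible $q\in\{1,2,3\}$ occurs and the space is zero.

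I expect the main obstacle to be the $q=1$ case, i.e. the $-7$ eigenspace, where the margin $26$ is largest and nearly saturated at the boundary pairs. The work is to organize the inequality $2C_G-p_{\max}(1)(p_{\max}(1)+2)-6>26$ cleanly. I would split into the cases $b=0$ (where $p=a-1$, so $q=1$ requires $a\ge1$) and $b\ge1$ (where $p\le a+1$). In each case I would write the difference as an explicit polynomial, e.g. $a^2+4ab+4b^2+4a+12b-\ldots$. Then I would verify positivity for all $(a,b)$ except $(1,0),(0,1),(1,1),(2,0)$, checking the few boundary values directly.
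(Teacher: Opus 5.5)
Your proposal is correct and follows the paper's approach. Your three inequalities $\lambda_{a,b}(p,2)>0$, $\lambda_{a,b}(p,3)>-22$, $\lambda_{a,b}(p,1)>26$ are equivalent to the paper's conditions $\lambda_2>0$, $\lambda_3>0$, $\lambda_1>0$. The paper completes them by the same substitution $p\le\min\{a+q,a+2b-q\}$, splitting into $b=0$, $b=1$, $b\ge2$ (and $b=2$, $b\ge3$ when $q=3$); this is the routine computation your sketch leaves to fill in. Your sample values ($31$ for $(0,2)$, giving $\tfrac52$, and $28$ for $(3,0)$, giving $1$) agree with the paper's, and the $q=1$ inequality fails exactly at the four excluded pairs.
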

	\begin{proof}
		By Lemma \ref{eigenvalue of J2}, we have $p=r+s$ and $q=a+r-s$, where $0\le r\le b$ and $0\le s\le a$. Hence $p=a+2r-q$, and therefore
		\begin{align*}
			|a-q|\le p\le\min\{a+q,a+2b-q\},\qquad p\equiv a-q\pmod 2.
		\end{align*}
		Combining Lemmas \ref{eigenvalue of J0}, \ref{eigenvalue of J2}, \ref{lambda and q}, and \ref{eigenvalue of J1}, we find that the possible eigenvalues of $\frac12\tilde J_2+\tilde J_0-12P_1-6P_2$ are of the following three forms:
		\begin{align*}
			\lambda_1&=a^2+2ab+2b^2+4a+6b-\frac12p(p+2)-16,\qquad |a-1|\le p\le\min\{a+1,a+2b-1\},\quad p\equiv a-1\pmod 2,\\
			\lambda_2&=a^2+2ab+2b^2+4a+6b-\frac12p(p+2)-8,\qquad |a-2|\le p\le\min\{a+2,a+2b-2\},\quad p\equiv a\pmod 2,\\
			\lambda_3&=a^2+2ab+2b^2+4a+6b-\frac12p(p+2)-4,\qquad |a-3|\le p\le\min\{a+3,a+2b-3\},\quad p\equiv a-1\pmod 2.
		\end{align*}
		Here $\lambda_1$, $\lambda_2$, and $\lambda_3$ correspond respectively to $(\lambda,q)=(-7,1),(12,2),(17,3)$ in Lemma \ref{lambda and q}.
		
		$\mathbf{Claim:\ \lambda_2>0.}$ If $b=0$, then $p=a-2$ and $a\ge2$. Substituting $p=a-2$ into $\lambda_2$, we obtain
		\begin{align*}
			\lambda_2=\frac12a^2+5a-8\ge4.
		\end{align*}
		If $b=1$, then $p\le a$, $a\ge1$, and
		\begin{align*}
			\lambda_2\ge\frac12a^2+5a>0.
		\end{align*}
		If $b\ge2$, then $p\le a+2$ and
		\begin{align*}
			a^2+2ab+4a-\frac12p(p+2)\ge-4.
		\end{align*}
		Therefore,
		\begin{align*}
			\lambda_2\ge2b^2+6b-12\ge8.
		\end{align*}
		
		$\mathbf{Claim:\ \lambda_3>0.}$ If $b=0$, then $p=a-3$ and $a\ge3$. Substituting $p=a-3$ into $\lambda_3$, we obtain
		\begin{align*}
			\lambda_3=\frac12a^2+6a-\frac{11}{2}\ge17.
		\end{align*}
		If $b=1$, then $p\le a-1$, $a\ge2$, and
		\begin{align*}
			\lambda_3\ge\frac12a^2+6a+\frac92>0.
		\end{align*}
		If $b=2$, then $p\le a+1$, $a\ge1$, and
		\begin{align*}
			\lambda_3\ge\frac12a^2+6a+\frac{29}{2}>0.
		\end{align*}
		If $b\ge3$, then $p\le a+3$ and
		\begin{align*}
			a^2+2ab+4a-\frac12p(p+2)\ge-\frac{15}{2}.
		\end{align*}
		Hence
		\begin{align*}
			\lambda_3\ge2b^2+6b-\frac{23}{2}\ge\frac{49}{2}.
		\end{align*}
		
		$\mathbf{Estimate\ of\ \lambda_1.}$ If $b=0$, then $p=a-1$ and $a\ge1$, so
		\begin{align*}
			\lambda_1=\frac12a^2+4a-\frac{31}{2}.
		\end{align*}
		Thus $\lambda_1=-11$ for $a=1$, $\lambda_1=-\frac{11}{2}$ for $a=2$, and $\lambda_1>0$ for $a\ge3$.
		
		If $b=1$, then $p\le a+1$ and
		\begin{align*}
			\lambda_1\ge\frac12a^2+4a-\frac{19}{2}.
		\end{align*}
		For $a=0$, we have $p=1$ and $\lambda_1=-\frac{19}{2}$. For $a=1$, we have $p=0$ or $p=2$, and the corresponding values of $\lambda_1$ are $-1$ and $-5$, respectively. Moreover, $\lambda_1>0$ for $a\ge2$.
		
		If $b\ge2$, then $p\le a+1$ and
		\begin{align*}
			a^2+2ab+4a-\frac12p(p+2)\ge-\frac32.
		\end{align*}
		Therefore,
		\begin{align*}
			\lambda_1\ge2b^2+6b-\frac{35}{2}\ge\frac52.
		\end{align*}
		The preceding estimates show that $\frac12\tilde J_2+\tilde J_0-12P_1-6P_2$ is positive definite whenever
		\begin{align*}
			(a,b)\notin\{(1,0),(0,1),(1,1),(2,0)\}.
		\end{align*}
		Lemma \ref{eigenvalue of J1} now implies that $\tilde J$ is positive definite on every such component.
	\end{proof}
	To simplify the explicit calculations in the next subsection, we now complexify the relevant spaces and operators.
	\begin{align*}
		\mathcal{U}_{a,b}^{\mathbb{C}}&:=\mathcal{U}_{a,b}\otimes_{\mathbb{R}}\mathbb{C}={\rm span}_{\mathbb{C}}\{U^{st}\mid1\le s,t\le l_{a,b}\},\\
		(\mathfrak{m}^*\otimes\mathfrak{k})_{\mathbb{C}}&:=(\mathfrak{m}^*\otimes_{\mathbb{R}}\mathfrak{k})\otimes_{\mathbb{R}}\mathbb{C}.
	\end{align*}
	Then we have
	\begin{align*}
		((\mathcal{U}_{a,b}\otimes_{\mathbb{R}}\mathfrak{m}^*\otimes_{\mathbb{R}}\mathfrak{k})_K)\otimes_{\mathbb{R}}\mathbb{C}\cong(\mathcal{U}_{a,b}^{\mathbb{C}}\otimes_{\mathbb{C}}(\mathfrak{m}^*\otimes\mathfrak{k})_{\mathbb{C}})_K.
	\end{align*}
	Let $\tilde J^{\mathbb{C}}$ and $\tilde\delta^\triangledown_{\mathbb{C}}$ denote the complexifications of $\tilde J$ and $\tilde\delta^\triangledown$ respectively. Thus,
	\begin{align*}
		&\tilde J^{\mathbb{C}}(B_1+iB_2)=\tilde J(B_1)+i\tilde J(B_2),\\
		&\tilde\delta^\triangledown_{\mathbb{C}}(B_1+iB_2)=\tilde\delta^\triangledown(B_1)+i\tilde\delta^\triangledown(B_2).
	\end{align*}
	Since both operators are $G$-equivariant, $\tilde J^{\mathbb{C}}$ preserves each $(a,b)$-component, while $\tilde\delta^\triangledown_{\mathbb{C}}$ maps it into the corresponding component of the complexified zero-form space.
	\begin{proposition}\label{eigenvalue of J^C}
		For every $\lambda\in\mathbb{R}$, we have
		\begin{align*}
			{\rm ker\,}(\tilde J^{\mathbb{C}}-\lambda I)\cap {\rm ker\,}\tilde\delta^\triangledown_{\mathbb{C}}=({\rm ker\,}(\tilde J-\lambda I)\cap {\rm ker\,}\tilde\delta^\triangledown)\otimes_{\mathbb{R}}\mathbb{C}.
		\end{align*}
		Consequently,
		\begin{align*}
			dim_{\mathbb{C}}({\rm ker\,}(\tilde J^{\mathbb{C}}-\lambda I)\cap {\rm ker\,}\tilde\delta^\triangledown_{\mathbb{C}})=dim_{\mathbb{R}}({\rm ker\,}(\tilde J-\lambda I)\cap {\rm ker\,}\tilde\delta^\triangledown).
		\end{align*}
		Therefore
		\begin{align*}
			i(\nabla)=\sum_{a,b\ge0}\sum_{\lambda<0}dim_{\mathbb{C}}({\rm ker\,}(\tilde J^{\mathbb{C}}-\lambda I)\cap(\mathcal{U}_{a,b}^{\mathbb{C}}\otimes(\mathfrak{m}^*\otimes\mathfrak{k})_{\mathbb{C}})_K\cap {\rm ker\,}\tilde\delta^\triangledown_{\mathbb{C}}),
		\end{align*}
		and similarly,
		\begin{align*}
			n(\nabla)=\sum_{a,b\ge0}\sum_{\lambda=0}dim_{\mathbb{C}}({\rm ker\,}(\tilde J^{\mathbb{C}}-\lambda I)\cap(\mathcal{U}_{a,b}^{\mathbb{C}}\otimes(\mathfrak{m}^*\otimes\mathfrak{k})_{\mathbb{C}})_K\cap {\rm ker\,}\tilde\delta^\triangledown_{\mathbb{C}}).
		\end{align*}
	\end{proposition}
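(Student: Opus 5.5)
The plan is to derive the three assertions from the fact that $\tilde J$ and $\tilde\delta^\triangledown$ are real operators, i.e.\ they commute with complex conjugation on the complexified spaces. Complex conjugation $c(B_1+iB_2)=B_1-iB_2$ is a real-linear involution of $(\mathcal{U}_{a,b}^{\mathbb{C}}\otimes(\mathfrak{m}^*\otimes\mathfrak{k})_{\mathbb{C}})_K$. By construction of $\tilde J^{\mathbb{C}}$ and $\tilde\delta^\triangledown_{\mathbb{C}}$ as complexifications, we have $\tilde J^{\mathbb{C}}\circ c=c\circ\tilde J^{\mathbb{C}}$ and $\tilde\delta^\triangledown_{\mathbb{C}}\circ c=c\circ\tilde\delta^\triangledown_{\mathbb{C}}$.

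For the first identity, the inclusion ``$\supseteq$'' is immediate from complex linearity. For ``$\subseteq$'', take $B=B_1+iB_2$ with $\tilde J^{\mathbb{C}}B=\lambda B$ and $\tilde\delta^\triangledown_{\mathbb{C}}B=0$. Since $\lambda$ is real, comparing real and imaginary parts (equivalently, applying $c$ and averaging) gives $\tilde JB_k=\lambda B_k$ and $\tilde\delta^\triangledown B_k=0$ for $k=1,2$. Thus $B$ lies in the complexification of the real joint kernel. The dimension equality then follows, since complexifying a real vector space preserves dimension.

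To express $i(\nabla)$ and $n(\nabla)$ as sums over components, I will use the isomorphism $\Psi\circ\Phi^{-1}\circ\Theta$. It intertwines $J$ with $\tilde J$ and $\delta^\triangledown$ with $\tilde\delta^\triangledown$, and $\mathscr S^\nabla=J$ on $\ker\delta^\triangledown$. Together these reduce $i(\nabla)$ and $n(\nabla)$ to spectral counts of $\tilde J$ on $\ker\tilde\delta^\triangledown$.

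The main point needing care is that the eigenspaces decompose along the Peter--Weyl sum $L^2(G,\mathbb{R})=\overline{\bigoplus\mathcal{U}_{a,b}}$. The argument runs as follows:
\begin{itemize}
\item Each summand $(\mathcal{U}_{a,b}\otimes\mathfrak{m}^*\otimes\mathfrak{k})_K$ is finite-dimensional and $\tilde J$-invariant by Schur's lemma.
\item $\tilde\delta^\triangledown$ maps each summand into the corresponding $(a,b)$-component of zero-forms, by $G$-equivariance.
\item These components are mutually orthogonal, and the $K$-invariant part of the $L^2$ closure is the closure of the sum of the $K$-invariant parts, because projection to $K$-invariants commutes with the $G$-isotypic projections.
\end{itemize}
It follows that a $\lambda$-eigenvector of the elliptic self-adjoint operator in $\ker\tilde\delta^\triangledown$ has each isotypic component again a $\lambda$-eigenvector in $\ker\tilde\delta^\triangledown$. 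Ellipticity gives finite multiplicities, so only finitely many components contribute, and the joint eigenspace is the direct sum of its intersections with the summands.

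Summing the real dimensions over $(a,b)$ and over $\lambda<0$ (respectively $\lambda=0$), and converting each real dimension to a complex one by the first part applied componentwise, yields the stated formulas. The componentwise application is valid because $c$ preserves each $\mathcal{U}^{\mathbb{C}}_{a,b}$, as $\mathcal{U}_{a,b}$ is a real form.
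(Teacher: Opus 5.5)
Your proposal is correct and follows essentially the same route as the paper. You split a complex joint eigenvector into real and imaginary parts to identify the constrained eigenspaces of $\tilde J^{\mathbb{C}}$ with complexifications of the real ones, and then decompose each finite-dimensional $G$-invariant constrained eigenspace along the Peter--Weyl components $(\mathcal{U}_{a,b}\otimes\mathfrak{m}^*\otimes\mathfrak{k})_K$. Your version only spells out points the paper leaves implicit: the intertwining of $\mathscr{S}^\nabla$ and $\delta^\triangledown$ with $\tilde J$ and $\tilde\delta^\triangledown$, and the commutation of the isotypic projections with $\tilde J$, with $\tilde\delta^\triangledown$, and with the $K$-invariance condition.
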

	\begin{proof}
		By the definition of the complexified operators,
		\begin{align*}
			\tilde\delta^\triangledown_{\mathbb{C}}(B+iB')=0
		\end{align*}
		if and only if
		\begin{align*}
			\tilde\delta^\triangledown B=0,\quad\tilde\delta^\triangledown B'=0.
		\end{align*}
		Thus
		\begin{align*}
			{\rm ker\,}\tilde\delta^\triangledown_{\mathbb{C}}={\rm ker\,}\tilde\delta^\triangledown\otimes_{\mathbb{R}}\mathbb{C}.
		\end{align*}
		Since $\tilde J$ is self-adjoint, every eigenvalue of $\tilde J^{\mathbb{C}}$ is real. For $\lambda\in\mathbb{R}$,
		\begin{align*}
			\tilde J^{\mathbb{C}}(B+iB')=\lambda(B+iB')
		\end{align*}
		if and only if
		\begin{align*}
			\tilde JB=\lambda B,\quad\tilde JB'=\lambda B'.
		\end{align*}
		Therefore
		\begin{align*}
			{\rm ker\,}(\tilde J^{\mathbb{C}}-\lambda I)={\rm ker\,}(\tilde J-\lambda I)\otimes_{\mathbb{R}}\mathbb{C}.
		\end{align*}
		Since complexification commutes with intersections of real subspaces, combining the last two identities gives
		\begin{align*}
			{\rm ker\,}(\tilde J^{\mathbb{C}}-\lambda I)\cap {\rm ker\,}\tilde\delta^\triangledown_{\mathbb{C}}=({\rm ker\,}(\tilde J-\lambda I)\cap {\rm ker\,}\tilde \delta^\triangledown)\otimes_{\mathbb{R}}\mathbb{C}.
		\end{align*}
		
		If $E$ is a finite-dimensional real vector space, then
		\begin{align*}
			{\rm dim\,}_{\mathbb{C}}(E\otimes_{\mathbb{R}}\mathbb{C})={\rm dim\,}_{\mathbb{R}}E.
		\end{align*}
		Hence the complex dimension of every constrained eigensubspace of $\tilde J^{\mathbb{C}}$ equals the real dimension of the corresponding constrained eigensubspace of $\tilde J$. Since $\tilde J$ represents a self-adjoint elliptic operator on the compact manifold $G/K$, its spectrum is discrete and bounded below, and every eigenspace is finite-dimensional. Each constrained eigenspace is $G$-invariant, so its Peter--Weyl decomposition is the direct sum of its intersections with the $(a,b)$-components. The formulas for $i(\nabla)$ and $n(\nabla)$ now follow by summing their dimensions.
	\end{proof}
	\subsection{Calculation of eigenvalues on four invariant subspaces}
	In this subsection, we calculate the eigenvalues of $\tilde J^{\mathbb{C}}$ on $(\mathcal{U}_{a,b}^{\mathbb{C}}\otimes_{\mathbb{C}}(\mathfrak{m}^*\otimes\mathfrak{k})_{\mathbb{C}})_K$ for $(a,b)=(1,0),(0,1),(2,0),(1,1)$. The same method is used in all four cases: we first determine the $K$-equivariant subspace, then verify the Coulomb condition, and finally compute the action of $\tilde J^{\mathbb{C}}$.
	
	ecall the definitions of $\{X_i\}$ and $\chi$ in \eqref{X_i} and \eqref{chi}, respectively. A direct calculation gives
	\begingroup
	\allowdisplaybreaks[4]
	\begin{align*}
		&\chi(\exp(tX_1))=\operatorname{diag}(e^{ti},1,e^{-ti},1),\\
		&\chi(\exp(tX_2))=\begin{pmatrix}
			\cos t&0&-\sin t&0\\
			0&1&0&0\\
			\sin t&0&\cos t&0\\
			0&0&0&1
		\end{pmatrix},\\
		&\chi(\exp(tX_3))=\begin{pmatrix}
			\cos t&0&-i\sin t&0\\
			0&1&0&0\\
			i\sin t&0&\cos t&0\\
			0&0&0&1
		\end{pmatrix},\\
		&\chi(\exp(tX_4))=\begin{pmatrix}
			\cos\frac t{\sqrt2}&\sin\frac t{\sqrt2}&0&0\\
			-\sin\frac t{\sqrt2}&\cos\frac t{\sqrt2}&0&0\\
			0&0&\cos\frac t{\sqrt2}&\sin\frac t{\sqrt2}\\
			0&0&-\sin\frac t{\sqrt2}&\cos\frac t{\sqrt2}
		\end{pmatrix},\\
		&\chi(\exp(tX_5))=\begin{pmatrix}
			\cos\frac t{\sqrt2}&i\sin\frac t{\sqrt2}&0&0\\
			i\sin\frac t{\sqrt2}&\cos\frac t{\sqrt2}&0&0\\
			0&0&\cos\frac t{\sqrt2}&-i\sin\frac t{\sqrt2}\\
			0&0&-i\sin\frac t{\sqrt2}&\cos\frac t{\sqrt2}
		\end{pmatrix},\\
		&\chi(\exp(tX_6))=\begin{pmatrix}
			\cos\frac t{\sqrt2}&0&0&-\sin\frac t{\sqrt2}\\
			0&\cos\frac t{\sqrt2}&-\sin\frac t{\sqrt2}&0\\
			0&\sin\frac t{\sqrt2}&\cos\frac t{\sqrt2}&0\\
			\sin\frac t{\sqrt2}&0&0&\cos\frac t{\sqrt2}
		\end{pmatrix},\\
		&\chi(\exp(tX_7))=\begin{pmatrix}
			\cos\frac t{\sqrt2}&0&0&-i\sin\frac t{\sqrt2}\\
			0&\cos\frac t{\sqrt2}&-i\sin\frac t{\sqrt2}&0\\
			0&-i\sin\frac t{\sqrt2}&\cos\frac t{\sqrt2}&0\\
			-i\sin\frac t{\sqrt2}&0&0&\cos\frac t{\sqrt2}
		\end{pmatrix}\\
		&\chi(\exp(tX_8))=\operatorname{diag}(1,e^{it},1,e^{-it}),\\
		&\chi(\exp(tX_9))=\begin{pmatrix}
			1&0&0&0\\
			0&\cos t&0&-\sin t\\
			0&0&1&0\\
			0&\sin t&0&\cos t
		\end{pmatrix},\\
		&\chi(\exp(tX_{10}))=\begin{pmatrix}
			1&0&0&0\\
			0&\cos t&0&-i\sin t\\
			0&0&1&0\\
			0&-i\sin t&0&\cos t
		\end{pmatrix}.
	\end{align*}
	\endgroup
	\subsubsection{$(a,b)=(1,0)$}
	
	By construction, $V_{1,0}=\mathbb{C}^4$ and $\pi_{1,0}(g)v=\chi(g)v$ for any $g\in G$ and $v\in V_{1,0}$. Let $\langle v,w\rangle:=v^T\bar w$ and define $U^{st}_{1,0}(g)=\langle\chi(g)e_s,e_t\rangle$ for $1\le s,t\le4$. By direct calculation, we have
	\begingroup
	\allowdisplaybreaks[4]
	\begin{alignat*}{2}
		X_1(U^{st}_{1,0})&=\left\{\begin{array}{ll}iU^{1t}_{1,0},&s=1,\\-iU^{3t}_{1,0},&s=3,\\0,&otherwise,\end{array}\right.\qquad&X_6(U^{st}_{1,0})&=\left\{\begin{array}{ll}\frac1{\sqrt2}U^{4t}_{1,0},&s=1,\\\frac1{\sqrt2}U^{3t}_{1,0},&s=2,\\-\frac1{\sqrt2}U^{2t}_{1,0},&s=3,\\-\frac1{\sqrt2}U^{1t}_{1,0},&s=4,\end{array}\right.\\
		X_2(U^{st}_{1,0})&=\left\{\begin{array}{ll}U^{3t}_{1,0},&s=1,\\-U^{1t}_{1,0},&s=3,\\0,&otherwise,\end{array}\right.\qquad&X_7(U^{st}_{1,0})&=\left\{\begin{array}{ll}-\frac i{\sqrt2}U^{4t}_{1,0},&s=1,\\-\frac i{\sqrt2}U^{3t}_{1,0},&s=2,\\-\frac i{\sqrt2}U^{2t}_{1,0},&s=3,\\-\frac i{\sqrt2}U^{1t}_{1,0},&s=4,\end{array}\right.\\
		X_3(U^{st}_{1,0})&=\left\{\begin{array}{ll}-iU^{3t}_{1,0},&s=1,\\-iU^{1t}_{1,0},&s=3,\\0,&otherwise,\end{array}\right.\qquad&X_8(U^{st}_{1,0})&=\left\{\begin{array}{ll}iU^{st}_{1,0},&s=2,\\-iU^{st}_{1,0},&s=4,\\0,&otherwise,\end{array}\right.\\
		X_4(U^{st}_{1,0})&=\left\{\begin{array}{ll}-\frac1{\sqrt2}U^{2t}_{1,0},&s=1,\\\frac1{\sqrt2}U^{1t}_{1,0},&s=2,\\-\frac1{\sqrt2}U^{4t}_{1,0},&s=3,\\\frac1{\sqrt2}U^{3t}_{1,0},&s=4,\end{array}\right.\qquad&X_9(U^{st}_{1,0})&=\left\{\begin{array}{ll}U^{4t}_{1,0},&s=2,\\-U^{2t}_{1,0},&s=4,\\0,&otherwise,\end{array}\right.\\
		X_5(U^{st}_{1,0})&=\left\{\begin{array}{ll}\frac i{\sqrt2}U^{2t}_{1,0},&s=1,\\\frac i{\sqrt2}U^{1t}_{1,0},&s=2,\\-\frac i{\sqrt2}U^{4t}_{1,0},&s=3,\\-\frac i{\sqrt2}U^{3t}_{1,0},&s=4,\end{array}\right.\qquad&X_{10}(U^{st}_{1,0})&=\left\{\begin{array}{ll}-iU^{4t}_{1,0},&s=2,\\-iU^{2t}_{1,0},&s=4,\\0,&otherwise.\end{array}\right.
	\end{alignat*}
	\endgroup
	and
	\begin{align}\label{X_alpha^2 for V_{1,0}}
		X_\alpha^2(U^{st}_{1,0})=\left\{
		\begin{array}{ll}
			-U^{st}_{1,0},&s=2\ or\ s=4,\\
			0,&otherwise,
		\end{array}
		\right.
	\end{align}
	for any $8\le\alpha\le10$.
	
	By Lemma \ref{K-equivariant equivalence} (ii) and (iii), the two commuting self-adjoint operators
	\begin{align*}
		-C_K\otimes I\otimes I,\quad I\otimes\sum_{\beta=8}^{10}(ad^*(X_\beta)\otimes I-I\otimes ad(X_\beta))^2
	\end{align*}
	preserve $(\mathcal{U}_{1,0}^{\mathbb{C}}\otimes(\mathfrak{m}^*\otimes\mathfrak{k})_{\mathbb{C}})_K$ and coincide on this subspace. By \eqref{X_alpha^2 for V_{1,0}} and the computation in the proof of Lemma \ref{lambda and q}, their spectra are $\{0,-3\}$ and $\{-8,-15,-3\}$, respectively. Hence only the common eigenvalue $-3$ can occur, and
	\begin{align*}
		(\mathcal{U}_{1,0}^{\mathbb{C}}\otimes(\mathfrak{m}^*\otimes\mathfrak{k})_{\mathbb{C}})_K\subset {\rm span}_{\mathbb{C}}\{U^{2t}_{1,0},U^{4t}_{1,0}\mid1\le t\le4\}\otimes {\rm span}_{\mathbb{C}}\{F_1,F_2,F_3,F_4\}.
	\end{align*}
	The eigenvalue $-3$ of the second operator corresponds to $(\lambda,q)=(-7,1)$ by Lemma \ref{lambda and q}. Since the branching rule for $V_{1,0}$ gives only $(p,q)=(0,1)$ and $(1,0)$, we have $p=0$. Therefore, $U^{2t}_{1,0},U^{4t}_{1,0}\in\mathcal{U}_{1,0}(0,1)\otimes_{\mathbb{R}}\mathbb{C}$.
	
	Applying Lemma \ref{K-equivariant equivalence} (i), we have $\tilde B\in(\mathcal{U}_{1,0}^\mathbb{C}\otimes(\mathfrak{m}^*\otimes\mathfrak{k})_{\mathbb{C}})_K$ if and only if
	\begin{align*}
		(X_\beta\otimes I\otimes I)(\tilde B)=(I\otimes(ad^*(X_\beta)\otimes I-I\otimes ad(X_\beta)))(\tilde B)
	\end{align*}
	for any $\beta=8,9,10$.  By direct calculation, we have
	\begin{align*}
		ad^*(X_\beta)\otimes I-I\otimes ad(X_\beta)=A_\beta'
	\end{align*}
	with respect to the ordered basis $(F_1,\dots,F_4)$, where
	\begin{equation}\label{A'}
		\begin{split}
			A_8'=\begin{pmatrix}
				0&0&0&1\\
				0&0&1&0\\
				0&-1&0&0\\
				-1&0&0&0
			\end{pmatrix}\quad
			A_9'=\begin{pmatrix}
				0&0&-1&0\\
				0&0&0&1\\
				1&0&0&0\\
				0&-1&0&0
			\end{pmatrix}\quad
			A_{10}'=\begin{pmatrix}
				0&1&0&0\\
				-1&0&0&0\\
				0&0&0&1\\
				0&0&-1&0
			\end{pmatrix}.
		\end{split}
	\end{equation}
	For any $1\le t\le4$, we have
	\begin{align*}
		X_\beta\otimes I\otimes I-I\otimes(ad^*(X_\beta)\otimes I-I\otimes ad(X_\beta))=A_\beta
	\end{align*}
	with respect to the ordered basis $(U^{2t}_{1,0}\otimes F_1,\dots,U^{2t}_{1,0}\otimes F_4,U^{4t}_{1,0}\otimes F_1,\dots,U^{4t}_{1,0}\otimes F_4)$, where
	\begin{align*}
		A_8=\begin{pmatrix}
			iI-A_8'&0\\
			0&-iI-A_8'
		\end{pmatrix}\quad
		A_9=\begin{pmatrix}
			-A_9'&-I\\
			I&-A_9'
		\end{pmatrix}\quad
		A_{10}=\begin{pmatrix}
			-A_{10}'&-iI\\
			-iI&-A_{10}'
		\end{pmatrix}.
	\end{align*}
	A direct calculation shows that the solution space of the system
	\begin{align*}
		A_8x=A_9x=A_{10}x=0.
	\end{align*}
	is
	\begin{align*}
		{\rm span}_{\mathbb{C}}\{(1,0,0,i,0,-i,-1,0),(0,1,i,0,i,0,0,1)\}
	\end{align*}
	and thus
	\begin{align*}
		(\mathcal{U}_{1,0}^\mathbb{C}\otimes(\mathfrak{m}^*\otimes\mathfrak{k})_{\mathbb{C}})_K={\rm span}_{\mathbb{C}}\{\Phi_{1t},\Phi_{2t}\mid1\le t\le 4\},
	\end{align*}
	where
	\begin{align*}
		&\Phi_{1t}=U^{2t}_{1,0}\otimes F_1+iU^{2t}_{1,0}\otimes F_4-iU^{4t}_{1,0}\otimes F_2-U^{4t}_{1,0}\otimes F_3,\\
		&\Phi_{2t}=U^{2t}_{1,0}\otimes F_2+iU^{2t}_{1,0}\otimes F_3+iU^{4t}_{1,0}\otimes F_1+U^{4t}_{1,0}\otimes F_4.
	\end{align*}
	Finally, we verify the Coulomb condition and compute the three terms of $\tilde J^{\mathbb{C}}$. Substituting the above vectors into the formula for $\tilde\delta$ gives
	\begin{align*}
		\tilde\delta(\Phi_{1t})=\tilde\delta(\Phi_{2t})=0.
	\end{align*}
	Since $U^{2t}_{1,0},U^{4t}_{1,0}\in\mathcal{U}_{1,0}(0,1)\otimes_{\mathbb{R}}\mathbb{C}$, Lemma \ref{eigenvalue of J2} shows that $\tilde J_2^{\mathbb{C}}=\lambda_{1,0}(0,1)Id=4Id$ on $(\mathcal{U}_{1,0}^{\mathbb{C}}\otimes(\mathfrak{m}^*\otimes\mathfrak{k})_{\mathbb{C}})_K$. Lemma \ref{eigenvalue of J0} shows that $\tilde J_0^{\mathbb{C}}=-7Id$ on this space. Using
	\begin{align*}
		\tilde J_1=\sum_{i=1}^3X_i\otimes\tilde J_{1i}\otimes I+\sum_{i=4}^7\sqrt2X_i\otimes\tilde J_{1i}\otimes I
	\end{align*}
	defined in Lemma \ref{eigenvalue of J1} and Table \ref{J_{1i}(xi_j)}, we can calculate that
	\begin{align*}
		\tilde J_1^{\mathbb{C}}(\Phi_{1t})=\tilde J_1^{\mathbb{C}}(\Phi_{2t})=0.
	\end{align*}
	Hence we obtain
	\begin{align}\label{tilde J on V_{1,0}}
		\tilde J^{\mathbb{C}}=-3Id
	\end{align}
	on $(\mathcal{U}_{1,0}^\mathbb{C}\otimes(\mathfrak{m}^*\otimes\mathfrak{k})_{\mathbb{C}})_K$ and
	\begin{align*}
		{\rm dim}_{\mathbb{C}}(\mathcal{U}_{1,0}^\mathbb{C}\otimes(\mathfrak{m}^*\otimes\mathfrak{k})_{\mathbb{C}})_K=8.
	\end{align*}
	\subsubsection{$(a,b)=(0,1)$}
	
	By construction, $V_{0,1}={\rm span}_{\mathbb{C}}\{E_1,\dots,E_5\}\subset\Lambda^2\mathbb{C}^4$. For the following calculation, we use the orthogonal basis
	\begin{align*}
		E_1=e_1\wedge e_2,\quad E_2=e_1\wedge e_3-e_2\wedge e_4,\quad E_3=e_1\wedge e_4,\quad E_4=e_2\wedge e_3,\quad E_5=e_3\wedge e_4.
	\end{align*}
	Recall that $\pi_{0,1}$ is the restriction to $V_{0,1}$ of the induced action of $G$ on $\Lambda^2\mathbb{C}^4$. Define
	\begin{align*}
		U^{st}_{0,1}(g)=\langle\pi_{0,1}(g)E_s,E_t\rangle
	\end{align*}
	for $1\le s,t\le5$. These functions span the same matrix-coefficient space as those defined in \eqref{U^st}. Differentiating the one-parameter subgroup formulas at the identity gives
	\begingroup
	\allowdisplaybreaks[4]
	\begin{alignat*}{2}
		&X_1(U^{st}_{0,1})=\left\{
		\begin{array}{ll}
			iU^{st}_{0,1},&s=1\ or\ 3,\\
			-iU^{st}_{0,1},&s=4\ or\ 5,\\
			0,&s=2,
		\end{array}
		\right.
		&X_6(U^{st}_{0,1})=\left\{
		\begin{array}{ll}
			-\sqrt2(U^{1t}_{0,1}+U^{5t}_{0,1}),&s=2,\\
			\frac1{\sqrt2}U^{2t}_{0,1},&s=1\ or\ 5,\\
			0,&s=3\ or\ 4,
		\end{array}
		\right.\\
		&X_2(U^{st}_{0,1})=\left\{
		\begin{array}{ll}
			-U^{4t}_{0,1},&s=1,\\
			0,&s=2,\\
			U^{5t}_{0,1},&s=3,\\
			U^{1t}_{0,1},&s=4,\\
			-U^{3t}_{0,1},&s=5,
		\end{array}
		\right.
		&X_7(U^{st}_{0,1})=\left\{
		\begin{array}{ll}
			\sqrt2i(U^{5t}_{0,1}-U^{1t}_{0,1}),&s=2,\\
			-\frac i{\sqrt2}U^{2t}_{0,1},&s=1,\\
			\frac i{\sqrt2}U^{2t}_{0,1},&s=5,\\
			0,&s=3\ or\ 4,
		\end{array}
		\right.\\
		&X_3(U^{st}_{0,1})=\left\{
		\begin{array}{ll}
			iU^{4t}_{0,1},&s=1,\\
			0,&s=2,\\
			-iU^{5t}_{0,1},&s=3,\\
			iU^{1t}_{0,1},&s=4,\\
			-iU^{3t}_{0,1},&s=5,
		\end{array}
		\right.
		&X_8(U^{st}_{0,1})=\left\{
		\begin{array}{ll}
			iU^{st}_{0,1},&s=1\ or\ 4,\\
			-iU^{st}_{0,1},&s=3\ or\ 5,\\
			0,&s=2,
		\end{array}
		\right.\\
		&X_4(U^{st}_{0,1})=\left\{
		\begin{array}{ll}
			-\sqrt2(U^{3t}_{0,1}+U^{4t}_{0,1}),&s=2,\\
			\frac1{\sqrt2}U^{2t}_{0,1},&s=3\ or\ 4,\\
			0,&s=1\ or\ 5,
		\end{array}
		\right.
		&X_9(U^{st}_{0,1})=\left\{
		\begin{array}{ll}
			U^{3t}_{0,1},&s=1,\\
			0,&s=2,\\
			-U^{1t}_{0,1},&s=3,\\
			-U^{5t}_{0,1},&s=4,\\
			U^{4t}_{0,1},&s=5,
		\end{array}
		\right.\\
		&X_5(U^{st}_{0,1})=\left\{
		\begin{array}{ll}
			\sqrt2i(U^{4t}_{0,1}-U^{3t}_{0,1}),&s=2,\\
			-\frac i{\sqrt2}U^{2t}_{0,1},&s=3,\\
			\frac i{\sqrt2}U^{2t}_{0,1},&s=4,\\
			0,&s=1\ or\ 5.
		\end{array}
		\right.
		&X_{10}(U^{st}_{0,1})=\left\{
		\begin{array}{ll}
			-iU^{3t}_{0,1},&s=1,\\
			0,&s=2,\\
			-iU^{1t}_{0,1},&s=3,\\
			iU^{5t}_{0,1},&s=4,\\
			iU^{4t}_{0,1},&s=5
		\end{array}
		\right.
	\end{alignat*}
	\endgroup
	for any $1\le t\le5$ and
	\begin{align*}
		X_\alpha^2(U^{st}_{0,1})=\left\{
		\begin{array}{ll}
			-U^{st}_{0,1},&s\ne2,\\
			0,&s=2,
		\end{array}
		\right.\\
	\end{align*}
	for any $8\le\alpha\le10$. Hence the spectrum of $-C_K\otimes I\otimes I$ is $\{0,-3\}$. As in the preceding case, the only common eigenvalue of the two $K$-Casimir operators is $-3$. Therefore,
	\begin{align*}
		(\mathcal{U}_{0,1}^{\mathbb{C}}\otimes(\mathfrak{m}^*\otimes\mathfrak{k})_{\mathbb{C}})_K\subset {\rm span}_{\mathbb{C}}\{U^{st}_{0,1}\mid s\ne2,\ 1\le t\le5\}\otimes {\rm span}_{\mathbb{C}}\{F_1,F_2,F_3,F_4\}.
	\end{align*}
	This common eigenvalue corresponds to $(\lambda,q)=(-7,1)$. Since the branching rule for $V_{0,1}$ gives $(p,q)=(0,0)$ and $(1,1)$, we have $p=1$. Hence
	\begin{align*}
		U^{st}_{0,1}\in\mathcal{U}^\mathbb{C}_{0,1}(1,1)
	\end{align*}
	for $s\ne2$. For each fixed $1\le t\le5$, with respect to the ordered basis
	\begin{align*}
		(U^{1t}_{0,1}\otimes F_1,\dots,U^{1t}_{0,1}\otimes F_4,U^{3t}_{0,1}\otimes F_1,\dots,U^{3t}_{0,1}\otimes F_4,U^{4t}_{0,1}\otimes F_1,\dots,U^{4t}_{0,1}\otimes F_4,U^{5t}_{0,1}\otimes F_1,\dots,U^{5t}_{0,1}\otimes F_4),
	\end{align*}
	the three infinitesimal $K$-equivariance operators are represented by $\tilde B_8,\tilde B_9,\tilde B_{10}$, where
	\begin{align*}
		&\tilde B_8=\begin{pmatrix}
			iI-A_8'&0&0&0\\
			0&-iI-A_8'&0&0\\
			0&0&iI-A_8'&0\\
			0&0&0&-iI-A_8'
		\end{pmatrix},\\
		&\tilde B_9=\begin{pmatrix}
			-A_9'&-I&0&0\\
			I&-A_9'&0&0\\
			0&0&-A_9'&I\\
			0&0&-I&-A_9'
		\end{pmatrix},\\
		&\tilde B_{10}=\begin{pmatrix}
			-A_{10}'&-iI&0&0\\
			-iI&-A_{10}'&0&0\\
			0&0&-A_{10}'&iI\\
			0&0&iI&-A_{10}'
		\end{pmatrix}
	\end{align*}
	where $A_\beta'$ is defined in \eqref{A'}. Direct calculation gives
	\begin{align*}
		(\mathcal{U}_{0,1}^{\mathbb{C}}\otimes(\mathfrak{m}^*\otimes\mathfrak{k})_{\mathbb{C}})_K={\rm span}_{\mathbb{C}}\{\tilde\Phi_{1t},\tilde\Phi_{2t},\tilde\Phi_{3t},\tilde\Phi_{4t}\mid1\le t\le5\},
	\end{align*}
	where
	\begin{align*}
		&\tilde\Phi_{1t}=U^{1t}_{0,1}\otimes F_1+iU^{1t}_{0,1}\otimes F_4-iU^{3t}_{0,1}\otimes F_2-U^{3t}_{0,1}\otimes F_3,\\
		&\tilde\Phi_{2t}=U^{1t}_{0,1}\otimes F_2+iU^{1t}_{0,1}\otimes F_3+iU^{3t}_{0,1}\otimes F_1+U^{3t}_{0,1}\otimes F_4,\\
		&\tilde\Phi_{3t}=U^{4t}_{0,1}\otimes F_1+iU^{4t}_{0,1}\otimes F_4+iU^{5t}_{0,1}\otimes F_2+U^{5t}_{0,1}\otimes F_3,\\
		&\tilde\Phi_{4t}=U^{4t}_{0,1}\otimes F_2+iU^{4t}_{0,1}\otimes F_3-iU^{5t}_{0,1}\otimes F_1-U^{5t}_{0,1}\otimes F_4.
	\end{align*}
	A direct substitution gives
	\begin{align*}
		\tilde\delta(\tilde\Phi_{it})=0
	\end{align*}
	for $1\le i\le4$ and $1\le t\le5$. Since $U^{st}_{0,1}\in\mathcal{U}^\mathbb{C}_{0,1}(1,1)$ for $s\ne2$, Lemma \ref{eigenvalue of J2} gives
	\begin{align*}
		\tilde J_2^{\mathbb{C}}=\lambda_{0,1}(1,1)Id=7Id
	\end{align*}
	on $(\mathcal{U}_{0,1}^{\mathbb{C}}\otimes(\mathfrak{m}^*\otimes\mathfrak{k})_{\mathbb{C}})_K$, while Lemma \ref{eigenvalue of J0} gives
	\begin{align*}
		\tilde J_0^{\mathbb{C}}=-7Id
	\end{align*}
	on the same space. Using Lemma \ref{eigenvalue of J1} and Table \ref{J_{1i}(xi_j)}, we obtain
	\begin{align*}
		\tilde J_1^{\mathbb{C}}(\tilde\Phi_{it})=0
	\end{align*}
	for $1\le i\le4$ and $1\le t\le5$. Consequently,
	\begin{align}\label{tilde J on V_{0,1}}
		\tilde J^{\mathbb{C}}=0
	\end{align}
	on $(\mathcal{U}_{0,1}^{\mathbb{C}}\otimes(\mathfrak{m}^*\otimes\mathfrak{k})_{\mathbb{C}})_K$ and
	\begin{align*}
		{\rm dim\,}_{\mathbb{C}}(\mathcal{U}_{0,1}^{\mathbb{C}}\otimes(\mathfrak{m}^*\otimes\mathfrak{k})_{\mathbb{C}})_K=20.
	\end{align*}
	\subsubsection{$(a,b)=(2,0)$}\label{section V_{2,0}}
	
	For $v,w\in\mathbb{C}^4$, set
	\begin{align*}
		v\odot w=v\otimes w+w\otimes v.
	\end{align*}
	By construction, $V_{2,0}={\rm span}_{\mathbb{C}}\{\bar E_1,\dots,\bar E_{10}\}=Sym^2\mathbb{C}^4$, where
	\begin{align*}
		&\bar E_1=e_1\otimes e_1,\quad\bar E_2=e_2\otimes e_2,\quad\bar E_3=e_3\otimes e_3,\quad\bar E_4=e_4\otimes e_4,\quad\bar E_5=e_1\odot e_2,\\
		&\bar E_6=e_1\odot e_3,\quad\bar E_7=e_1\odot e_4,\quad\bar E_8=e_2\odot e_3,\quad\bar E_9=e_2\odot e_4,\quad\bar E_{10}=e_3\odot e_4.
	\end{align*}
	The representation $\pi_{2,0}$ is given by
	\begin{align*}
		\pi_{2,0}(g)(v\odot w)=(\chi(g)v)\odot(\chi(g)w)
	\end{align*}
	for $g\in G$ and $v,w\in\mathbb{C}^4$. Equip $V_{2,0}$ with the induced Hermitian inner product and define
	\begin{align*}
		U^{st}_{2,0}(g)=\langle\pi_{2,0}(g)\bar E_s,\bar E_t\rangle
	\end{align*}
	for $1\le s,t\le10$. These functions span the same matrix-coefficient space as those defined in \eqref{U^st}. Differentiating the one-parameter subgroup formulas at the identity gives
	\par\addvspace{10pt}
	\begingroup
	\captionsetup{labelsep=colon,skip=0pt}
	\captionof{table}{$\mathbf{X_i(U^{st}_{2,0})\text{ on }V_{2,0}}$}
	\label{X_i(Ust)-V20}
	\setlength{\abovedisplayskip}{2pt}
	\setlength{\abovedisplayshortskip}{2pt}
	\begin{align*}
		\begin{array}{|c|c|c|c|c|c|}
			\hline
			X_i(U^{st}_{2,0})&i=1&i=2&i=3&i=4&i=5\\
			\hline
			s=1&2iU^{1t}_{2,0}&U^{6t}_{2,0}&-iU^{6t}_{2,0}&-\frac1{\sqrt2}U^{5t}_{2,0}&\frac i{\sqrt2}U^{5t}_{2,0}\\
			\hline
			s=2&0&0&0&\frac1{\sqrt2}U^{5t}_{2,0}&\frac i{\sqrt2}U^{5t}_{2,0}\\
			\hline
			s=3&-2iU^{3t}_{2,0}&-U^{6t}_{2,0}&-iU^{6t}_{2,0}&-\frac1{\sqrt2}U^{10,t}_{2,0}&-\frac i{\sqrt2}U^{10,t}_{2,0}\\
			\hline
			s=4&0&0&0&\frac1{\sqrt2}U^{10,t}_{2,0}&-\frac i{\sqrt2}U^{10,t}_{2,0}\\
			\hline
			s=5&iU^{5t}_{2,0}&U^{8t}_{2,0}&-iU^{8t}_{2,0}&\sqrt2(U^{1t}_{2,0}-U^{2t}_{2,0})&\sqrt2i(U^{1t}_{2,0}+U^{2t}_{2,0})\\
			\hline
			s=6&0&2(U^{3t}_{2,0}-U^{1t}_{2,0})&-2i(U^{1t}_{2,0}+U^{3t}_{2,0})&-\frac1{\sqrt2}(U^{8t}_{2,0}+U^{7t}_{2,0})&\frac i{\sqrt2}(U^{8t}_{2,0}-U^{7t}_{2,0})\\
			\hline
			s=7&iU^{7t}_{2,0}&U^{10,t}_{2,0}&-iU^{10,t}_{2,0}&\frac1{\sqrt2}(U^{6t}_{2,0}-U^{9t}_{2,0})&\frac i{\sqrt2}(U^{9t}_{2,0}-U^{6t}_{2,0})\\
			\hline
			s=8&-iU^{8t}_{2,0}&-U^{5t}_{2,0}&-iU^{5t}_{2,0}&\frac1{\sqrt2}(U^{6t}_{2,0}-U^{9t}_{2,0})&\frac i{\sqrt2}(U^{6t}_{2,0}-U^{9t}_{2,0})\\
			\hline
			s=9&0&0&0&\frac1{\sqrt2}(U^{7t}_{2,0}+U^{8t}_{2,0})&\frac i{\sqrt2}(U^{7t}_{2,0}-U^{8t}_{2,0})\\
			\hline
			s=10&-iU^{10,t}_{2,0}&-U^{7t}_{2,0}&-iU^{7t}_{2,0}&\sqrt2(U^{3t}_{2,0}-U^{4t}_{2,0})&-\sqrt2i(U^{3t}_{2,0}+U^{4t}_{2,0})\\
			\hline
		\end{array}
	\end{align*}
	\endgroup
	\begin{align*}
		\begin{array}{|c|c|c|c|c|c|}
			\hline
			X_i(U^{st}_{2,0})&i=6&i=7&i=8&i=9&i=10\\
			\hline
			s=1&\frac1{\sqrt2}U^{7t}_{2,0}&-\frac i{\sqrt2}U^{7t}_{2,0}&0&0&0\\
			\hline
			s=2&\frac1{\sqrt2}U^{8t}_{2,0}&-\frac i{\sqrt2}U^{8t}_{2,0}&2iU^{2t}_{2,0}&U^{9t}_{2,0}&-iU^{9t}_{2,0}\\
			\hline
			s=3&-\frac1{\sqrt2}U^{8t}_{2,0}&-\frac i{\sqrt2}U^{8t}_{2,0}&0&0&0\\
			\hline
			s=4&-\frac1{\sqrt2}U^{7t}_{2,0}&-\frac i{\sqrt2}U^{7t}_{2,0}&-2iU^{4t}_{2,0}&-U^{9t}_{2,0}&-iU^{9t}_{2,0}\\
			\hline
			s=5&\frac1{\sqrt2}(U^{6t}_{2,0}+U^{9t}_{2,0})&-\frac i{\sqrt2}(U^{6t}_{2,0}+U^{9t}_{2,0})&iU^{5t}_{2,0}&U^{7t}_{2,0}&-iU^{7t}_{2,0}\\
			\hline
			s=6&\frac1{\sqrt2}(U^{10,t}_{2,0}-U^{5t}_{2,0})&-\frac i{\sqrt2}(U^{5t}_{2,0}+U^{10,t}_{2,0})&0&0&0\\
			\hline
			s=7&\sqrt2(U^{4t}_{2,0}-U^{1t}_{2,0})&-\sqrt2i(U^{1t}_{2,0}+U^{4t}_{2,0})&-iU^{7t}_{2,0}&-U^{5t}_{2,0}&-iU^{5t}_{2,0}\\
			\hline
			s=8&\sqrt2(U^{3t}_{2,0}-U^{2t}_{2,0})&-\sqrt2i(U^{2t}_{2,0}+U^{3t}_{2,0})&iU^{8t}_{2,0}&U^{10,t}_{2,0}&-iU^{10,t}_{2,0}\\
			\hline
			s=9&\frac1{\sqrt2}(U^{10,t}_{2,0}-U^{5t}_{2,0})&-\frac i{\sqrt2}(U^{5t}_{2,0}+U^{10,t}_{2,0})&0&2(U^{4t}_{2,0}-U^{2t}_{2,0})&-2i(U^{2t}_{2,0}+U^{4t}_{2,0})\\
			\hline
			s=10&-\frac1{\sqrt2}(U^{6t}_{2,0}+U^{9t}_{2,0})&-\frac i{\sqrt2}(U^{6t}_{2,0}+U^{9t}_{2,0})&-iU^{10,t}_{2,0}&-U^{8t}_{2,0}&-iU^{8t}_{2,0}\\
			\hline
		\end{array}
	\end{align*}
	for any $1\le t\le 10$ and
	\begin{align*}
		&X_8^2(U^{st}_{2,0})=\left\{
		\begin{array}{ll}
			-4U^{st}_{2,0},&s=2,4,\\
			-U^{st}_{2,0},&s=5,7,8,10,\\
			0,&s=1,3,6,9,
		\end{array}
		\right.\\
		&X_9^2(U^{st}_{2,0})=\left\{
		\begin{array}{ll}
			2U^{4t}_{2,0}-2U^{2t}_{2,0},&s=2,\\
			2U^{2t}_{2,0}-2U^{4t}_{2,0},&s=4,\\
			-U^{st}_{2,0},&s=5,7,8,10,\\
			-4U^{9t}_{2,0},&s=9,\\
			0,&s=1,3,6,
		\end{array}
		\right.\\
		&X_{10}^2(U^{st}_{2,0})=\left\{
		\begin{array}{ll}
			-2(U^{2t}_{2,0}+U^{4t}_{2,0}),&s=2,4,\\
			-U^{st}_{2,0},&s=5,7,8,10,\\
			-4U^{9t}_{2,0},&s=9,\\
			0,&s=1,3,6,
		\end{array}
		\right.
	\end{align*}
	for any $1\le t\le10$. Hence the spectrum of $-C_K\otimes I\otimes I$ is $\{0,-3,-8\}$. As in the preceding cases, matching the eigenvalues of the two $K$-Casimir operators gives
	\begin{align*}
		(\mathcal{U}_{2,0}^{\mathbb{C}}\otimes(\mathfrak{m}^*\otimes\mathfrak{k})_{\mathbb{C}})_K\subset&{\rm span}_{\mathbb{C}}\{U^{st}_{2,0}\mid s=5,7,8,10,\ 1\le t\le 10\}\otimes {\rm span}_{\mathbb{C}}\{F_i\mid 1\le i\le 4\}\\
		&\oplus {\rm span}_{\mathbb{C}}\{U^{st}_{2,0}\mid s=2,4,9,\ 1\le t\le 10\}\otimes {\rm span}_{\mathbb{C}}\{\xi_i\otimes X_\alpha\mid1\le i\le3,\ 8\le\alpha\le10\}.
	\end{align*}
	The first and second summands correspond to $(\lambda,q)=(-7,1)$ and $(12,2)$, respectively. Hence $U^{st}_{2,0}\in\mathcal{U}_{2,0}^{\mathbb{C}}(1,1)$ for $s=5,7,8,10$ and $U^{st}_{2,0}\in \mathcal{U}_{2,0}^{\mathbb{C}}(0,2)$ for $s=2,4,9$. For each fixed $1\le t\le10$, the infinitesimal $K$-equivariance equations on the first summand are
	\begin{align*}
		\bar A_8x=\bar A_9x=\bar A_{10}x=0,\quad x\in\mathbb{C}^{16},
	\end{align*}
	with respect to the ordered basis
	\begin{align*}
		(&U^{5t}_{2,0}\otimes F_1,\dots,U^{5t}_{2,0}\otimes F_4,U^{7t}_{2,0}\otimes F_1,\dots,U^{7t}_{2,0}\otimes F_4,\\
		&U^{8t}_{2,0}\otimes F_1,\dots,U^{8t}_{2,0}\otimes F_4,U^{10t}_{2,0}\otimes F_1,\dots,U^{10t}_{2,0}\otimes F_4).
	\end{align*}
	For each fixed $1\le j\le3$ and $1\le t\le10$, the equations on the second summand are
	\begin{align*}
		\bar B_8y=\bar B_9y=\bar B_{10}y=0,\qquad y\in\mathbb{C}^9,
	\end{align*}
	with respect to the ordered basis
	\begin{align*}
		(&U^{2t}_{2,0}\otimes\xi_j\otimes X_8,U^{2t}_{2,0}\otimes\xi_j\otimes X_9,U^{2t}_{2,0}\otimes\xi_j\otimes X_{10},\\
		&U^{4t}_{2,0}\otimes\xi_j\otimes X_8,U^{4t}_{2,0}\otimes\xi_j\otimes X_9,U^{4t}_{2,0}\otimes\xi_j\otimes X_{10},\\
		&U^{9t}_{2,0}\otimes\xi_j\otimes X_8,U^{9t}_{2,0}\otimes\xi_j\otimes X_9,U^{9t}_{2,0}\otimes\xi_j\otimes X_{10}),
	\end{align*}
	where
	\begin{align*}
		&\bar A_8=\begin{pmatrix}
			iI-A_8'&0&0&0\\
			0&-iI-A_8'&0&0\\
			0&0&iI-A_8'&0\\
			0&0&0&-iI-A_8'
		\end{pmatrix}\\
		&\bar A_9=\begin{pmatrix}
			-A_9'&-I&0&0\\
			I&-A_9'&0&0\\
			0&0&-A_9'&-I\\
			0&0&I&-A_9'
		\end{pmatrix}\\
		&\bar A_{10}=\begin{pmatrix}
			-A_{10}'&-iI&0&0\\
			-iI&-A_{10}'&0&0\\
			0&0&-A_{10}'&-iI\\
			0&0&-iI&-A_{10}'
		\end{pmatrix}
	\end{align*}
	and
	\begin{align*}
		&\bar B_8=\begin{pmatrix}
			2iI+B_8'&0&0\\
			0&-2iI+B_8'&0\\
			0&0&B_8'
		\end{pmatrix}\\
		&\bar B_9=\begin{pmatrix}
			B_9'&0&-2I\\
			0&B_9'&2I\\
			I&-I&B_9'
		\end{pmatrix}\\
		&\bar B_{10}=\begin{pmatrix}
			B_{10}'&0&-2iI\\
			0&B_{10}'&-2iI\\
			-iI&-iI&B_{10}'
		\end{pmatrix}
	\end{align*}
	with
	\begin{align*}
		B_8'=\begin{pmatrix}
			0&0&0\\
			0&0&-2\\
			0&2&0
		\end{pmatrix}\quad
		B_9'=\begin{pmatrix}
			0&0&2\\
			0&0&0\\
			-2&0&0
		\end{pmatrix}\quad
		B_{10}'=\begin{pmatrix}
			0&-2&0\\
			2&0&0\\
			0&0&0
		\end{pmatrix}.
	\end{align*}
	Direct calculation gives
	\begin{align*}
		(\mathcal{U}_{2,0}^{\mathbb{C}}\otimes(\mathfrak{m}^*\otimes\mathfrak{k})_{\mathbb{C}})_K={\rm span}_{\mathbb{C}}\{\bar\Phi_{1t},\bar\Phi_{2t},\bar\Phi_{3t},\bar\Phi_{4t},\bar\Phi_{5jt}\mid 1\le t\le10,\ 1\le j\le3\},
	\end{align*}
	where
	\begin{align*}
		&\bar\Phi_{1t}=U^{5t}_{2,0}\otimes F_1+iU^{5t}_{2,0}\otimes F_4-iU^{7t}_{2,0}\otimes F_2-U^{7t}_{2,0}\otimes F_3,\\
		&\bar\Phi_{2t}=U^{5t}_{2,0}\otimes F_2+iU^{5t}_{2,0}\otimes F_3+iU^{7t}_{2,0}\otimes F_1+U^{7t}_{2,0}\otimes F_4,\\
		&\bar\Phi_{3t}=U^{8t}_{2,0}\otimes F_1+iU^{8t}_{2,0}\otimes F_4-iU^{10,t}_{2,0}\otimes F_2-U^{10,t}_{2,0}\otimes F_3,\\
		&\bar\Phi_{4t}=U^{8t}_{2,0}\otimes F_2+iU^{8t}_{2,0}\otimes F_3+iU^{10,t}_{2,0}\otimes F_1+U^{10,t}_{2,0}\otimes F_4,\\
		&\bar\Phi_{5jt}=(U^{2t}_{2,0}+U^{4t}_{2,0})\otimes\xi_j\otimes X_9+i(U^{2t}_{2,0}-U^{4t}_{2,0})\otimes\xi_j\otimes X_{10}+iU^{9t}_{2,0}\otimes\xi_j\otimes X_8.
	\end{align*}
	A direct substitution gives
	\begin{align*}
		(\mathcal{U}_{2,0}^{\mathbb{C}}\otimes(\mathfrak{m}^*\otimes\mathfrak{k})_{\mathbb{C}})_K\cap{\rm ker\,}\tilde\delta={\rm span}_{\mathbb{C}}\{\bar\Phi_{1t},\bar\Phi_{4t},\bar\Phi_{5jt},\bar\Phi_{2t}-i\bar\Phi_{3t}\mid1\le t\le10,\ 1\le j\le3\}.
	\end{align*}
	Lemma \ref{eigenvalue of J2} and Lemma \ref{eigenvalue of J0} give
	\begin{align*}
		\tilde J_2^{\mathbb{C}}=\lambda_{2,0}(1,1)Id=15Id,\quad\tilde J_0^{\mathbb{C}}=-7Id
	\end{align*}
	on ${\rm span}_{\mathbb{C}}\{\bar\Phi_{it}\mid1\le i\le4,\ 1\le t\le10\}$, and
	\begin{align*}
		\tilde J_2^{\mathbb{C}}=\lambda_{2,0}(0,2)Id=8Id,\quad\tilde J_0^{\mathbb{C}}=12Id
	\end{align*}
	on ${\rm span}_{\mathbb{C}}\{\bar\Phi_{5jt}\mid1\le j\le3,\ 1\le t\le10\}$. Using Lemma \ref{eigenvalue of J1} and Table \ref{J_{1i}(xi_j)}, we obtain
	\begin{align*}
		&\tilde J_1^{\mathbb{C}}(\bar\Phi_{1t})=8\sqrt2(\bar\Phi_{53t}+i\bar\Phi_{52t}),\\
		&\tilde J_1^{\mathbb{C}}(\bar\Phi_{2t}-i\bar\Phi_{3t})=-16\sqrt2i\bar\Phi_{51t},\\
		&\tilde J_1^{\mathbb{C}}(\bar\Phi_{4t})=8\sqrt2(\bar\Phi_{52t}+i\bar\Phi_{53t}),\\
		&\tilde J_1^{\mathbb{C}}(\bar\Phi_{51t})=\sqrt2(\bar\Phi_{3t}+i\bar\Phi_{2t}),\\
		&\tilde J_1^{\mathbb{C}}(\bar\Phi_{52t})=\sqrt2(\bar\Phi_{4t}-i\bar\Phi_{1t}),\\
		&\tilde J_1^{\mathbb{C}}(\bar\Phi_{53t})=\sqrt2(\bar\Phi_{1t}-i\bar\Phi_{4t})
	\end{align*}
	for any $1\le t\le10$, that is
	\begin{align*}
		\tilde J_1^{\mathbb{C}}=\begin{pmatrix}
			0&0&0&0&-\sqrt2i&\sqrt2\\
			0&0&0&\sqrt2i&0&0\\
			0&0&0&0&\sqrt2&-\sqrt2i\\
			0&-16\sqrt2i&0&0&0&0\\
			8\sqrt2i&0&8\sqrt2&0&0&0\\
			8\sqrt2&0&8\sqrt2i&0&0&0
		\end{pmatrix}
	\end{align*}
	with respect to the ordered basis $(\bar\Phi_{1t},\bar\Phi_{2t}-i\bar\Phi_{3t},\bar\Phi_{4t},\bar\Phi_{51t},\bar\Phi_{52t},\bar\Phi_{53t})$ for any $1\le t\le10$. Its characteristic polynomial is
	\begin{align*}
		det(\lambda I-\tilde J_1^{\mathbb{C}})=(\lambda-4\sqrt2)^3(\lambda+4\sqrt2)^3.
	\end{align*}
	Thus the eigenvalues of $\tilde J_1^{\mathbb{C}}$ are $4\sqrt2$ and $-4\sqrt2$. Hence
	\begin{align}\label{tilde J on V_{2,0}}
		\tilde J^{\mathbb{C}}\ge(8-4\sqrt2)Id>0
	\end{align}
	on $(\mathcal{U}_{2,0}^{\mathbb{C}}\otimes(\mathfrak{m}^*\otimes\mathfrak{k})_{\mathbb{C}})_K\cap{\rm ker\,}\tilde\delta$.
	\subsubsection{$(a,b)=(1,1)$}\label{section V_{1,1}}
	
	By construction, $V_{1,1}={\rm span}_{\mathbb{C}}\{\hat E_1,\dots,\hat E_{16}\}\subset\mathbb{C}^4\otimes\Lambda^2\mathbb{C}^4$, where
	\begin{align*}
		\begin{array}{llll}
			\hat E_1=e_1\otimes E_1,&\hat E_2=e_1\otimes E_2+e_4\otimes E_1,&\hat E_3=e_1\otimes E_3,&\hat E_4=e_1\otimes E_4-e_3\otimes E_1,\\
			\hat E_5=e_1\otimes E_5+e_3\otimes E_3,&\hat E_6=e_2\otimes E_1,&\hat E_7=e_2\otimes E_2+e_3\otimes E_1,&\hat E_8=e_2\otimes E_3+e_4\otimes E_1,\\
			\hat E_9=e_2\otimes E_4,&\hat E_{10}=e_2\otimes E_5-e_4\otimes E_4,&\hat E_{11}=e_3\otimes E_2-e_4\otimes E_4,&\hat E_{12}=e_3\otimes E_3+e_4\otimes E_2,\\
			\hat E_{13}=e_3\otimes E_4,&\hat E_{14}=e_3\otimes E_5,&\hat E_{15}=e_4\otimes E_3,&\hat E_{16}=e_4\otimes E_5.
		\end{array}
	\end{align*}
	The tensor-product action of $G$ on $\mathbb{C}^4\otimes\Lambda^2\mathbb{C}^4$ is given by
	\begin{align*}
		u\otimes(w\wedge v)\longmapsto(\chi(g)u)\otimes((\chi(g)w)\wedge(\chi(g)v)),
	\end{align*}
	and $\pi_{1,1}$ is its restriction to $V_{1,1}$. Equip $V_{1,1}$ with the induced Hermitian inner product and define
	\begin{align*}
		U^{st}_{1,1}(g)=\langle\pi_{1,1}(g)\hat E_s,\hat E_t\rangle
	\end{align*}
	for $1\le s,t\le 16$. These functions span the same matrix-coefficient space as those defined in \eqref{U^st}. Differentiating the one-parameter subgroup formulas at the identity gives
	\par\addvspace{10pt}
	\begingroup
	\captionsetup{labelsep=colon,skip=0pt}
	\captionof{table}{$\mathbf{X_i(U^{st}_{1,1})\text{ on }V_{1,1}}$}
	\label{X_i(U^{st}) V_{1,1}}
	\setlength{\abovedisplayskip}{2pt}
	\setlength{\abovedisplayshortskip}{2pt}
	\begin{align*}
		\begin{array}{|c|c|c|c|c|}
			\hline
			X_i(U^{st}_{1,1})&s=1&s=2&s=3&s=4\\
			\hline
			i=1&2iU^{1t}_{1,1}&iU^{2t}_{1,1}&2iU^{3t}_{1,1}&0\\
			\hline
			i=2&-U^{4t}_{1,1}&U^{11,t}_{1,1}&U^{5t}_{1,1}&2(U^{1t}_{1,1}+U^{13,t}_{1,1})\\
			\hline
			i=3&iU^{4t}_{1,1}&-iU^{11,t}_{1,1}&-iU^{5t}_{1,1}&2i(U^{1t}_{1,1}-U^{13,t}_{1,1})\\
			\hline
			i=4&-\frac1{\sqrt2}U^{6t}_{1,1}&-\sqrt2(U^{3t}_{1,1}+U^{4t}_{1,1}+\frac12U^{7t}_{1,1})&\frac1{\sqrt2}(U^{2t}_{1,1}-U^{8t}_{1,1})&\frac1{\sqrt2}(U^{2t}_{1,1}-U^{9t}_{1,1})\\
			\hline
			i=5&\frac i{\sqrt2}U^{6t}_{1,1}&\sqrt2i(U^{4t}_{1,1}-U^{3t}_{1,1}+\frac12U^{7t}_{1,1})&\frac i{\sqrt2}(U^{8t}_{1,1}-U^{2t}_{1,1})&\frac i{\sqrt2}(U^{2t}_{1,1}+U^{9t}_{1,1})\\
			\hline
			i=6&\frac1{\sqrt2}U^{2t}_{1,1}&\sqrt2(-\frac32U^{1t}_{1,1}-U^{5t}_{1,1}+U^{12,t}_{1,1})&\frac1{\sqrt2}U^{15,t}_{1,1}&\frac1{\sqrt2}(U^{6t}_{1,1}-U^{11,t}_{1,1})\\
			\hline
			i=7&-\frac i{\sqrt2}U^{2t}_{1,1}&\sqrt2i(-\frac32U^{1t}_{1,1}+U^{5t}_{1,1}-U^{12,t}_{1,1})&-\frac i{\sqrt2}U^{15,t}_{1,1}&\frac i{\sqrt2}(U^{6t}_{1,1}+U^{11,t}_{1,1})\\
			\hline
			i=8&iU^{1t}_{1,1}&0&-iU^{3t}_{1,1}&iU^{4t}_{1,1}\\
			\hline
			i=9&U^{3t}_{1,1}&U^{15,t}_{1,1}-U^{6t}_{1,1}&-U^{1t}_{1,1}&-U^{5t}_{1,1}\\
			\hline
			i=10&-iU^{3t}_{1,1}&-i(U^{6t}_{1,1}+U^{15,t}_{1,1})&-iU^{1t}_{1,1}&iU^{5t}_{1,1}\\
			\hline
		\end{array}
	\end{align*}  
	\endgroup
	\begin{align*}
		\begin{array}{|c|c|c|c|c|}
			\hline
			X_i(U^{st}_{1,1})&s=5&s=6&s=7&s=8\\
			\hline
			i=1&0&iU^{6t}_{1,1}&0&iU^{8t}_{1,1}\\
			\hline
			i=2&2(U^{14,t}_{1,1}-U^{3t}_{1,1})&-U^{9t}_{1,1}&-U^{1t}_{1,1}-U^{13,t}_{1,1}&U^{10,t}_{1,1}\\
			\hline
			i=3&-2i(U^{3t}_{1,1}+U^{14,t}_{1,1})&iU^{9t}_{1,1}&i(U^{13,t}_{1,1}-U^{1t}_{1,1})&-iU^{10,t}_{1,1}\\
			\hline
			i=4&\frac1{\sqrt2}(-U^{10,t}_{1,1}+U^{11,t}_{1,1}-U^{15,t}_{1,1})&\frac1{\sqrt2}U^{1t}_{1,1}&\sqrt2(\frac12U^{2t}_{1,1}-U^{8t}_{1,1}-U^{9t}_{1,1})&\frac1{\sqrt2}(U^{3t}_{1,1}+U^{7t}_{1,1})\\
			\hline
			i=5&\frac i{\sqrt2}(U^{10,t}_{1,1}-U^{11,t}_{1,1}-U^{15,t}_{1,1})&\frac i{\sqrt2}U^{1t}_{1,1}&\sqrt2i(\frac12U^{2t}_{1,1}-U^{8t}_{1,1}+U^{9t}_{1,1})&\frac i{\sqrt2}(U^{3t}_{1,1}-U^{7t}_{1,1})\\
			\hline
			i=6&\frac1{\sqrt2}(U^{2t}_{1,1}-U^{8t}_{1,1}+U^{16,t}_{1,1})&\frac1{\sqrt2}U^{7t}_{1,1}&\sqrt2(-\frac32U^{6t}_{1,1}-U^{10,t}_{1,1}+U^{11,t}_{1,1})&\frac1{\sqrt2}(U^{12,t}_{1,1}-U^{1t}_{1,1})\\
			\hline
			i=7&\frac i{\sqrt2}(U^{2t}_{1,1}-U^{8t}_{1,1}-U^{16,t}_{1,1})&-\frac i{\sqrt2}U^{7t}_{1,1}&\sqrt2i(-\frac32U^{6t}_{1,1}+U^{10,t}_{1,1}-U^{11,t}_{1,1})&-\frac i{\sqrt2}(U^{1t}_{1,1}+U^{12,t}_{1,1})\\
			\hline
			i=8&-iU^{5t}_{1,1}&2iU^{6t}_{1,1}&iU^{7t}_{1,1}&0\\
			\hline
			i=9&U^{4t}_{1,1}&U^{8t}_{1,1}&U^{12,t}_{1,1}&2(U^{15,t}_{1,1}-U^{6t}_{1,1})\\
			\hline
			i=10&iU^{4t}_{1,1}&-iU^{8t}_{1,1}&-iU^{12,t}_{1,1}&-2i(U^{6t}_{1,1}+U^{15,t}_{1,1})\\
			\hline
		\end{array}
	\end{align*}   
	\begin{align*}
		\begin{array}{|c|c|c|c|c|}
			\hline
			X_i(U^{st}_{1,1})&s=9&s=10&s=11&s=12\\
			\hline
			i=1&-iU^{9t}_{1,1}&-iU^{10,t}_{1,1}&-iU^{11,t}_{1,1}&0\\
			\hline
			i=2&U^{6t}_{1,1}&-U^{8t}_{1,1}&-U^{2t}_{1,1}&U^{14,t}_{1,1}-U^{3t}_{1,1}\\
			\hline
			i=3&iU^{6t}_{1,1}&-iU^{8t}_{1,1}&-iU^{2t}_{1,1}&-i(U^{3t}_{1,1}+U^{14,t}_{1,1})\\
			\hline
			i=4&\frac1{\sqrt2}(U^{4t}_{1,1}+U^{7t}_{1,1})&\frac1{\sqrt2}(U^{5t}_{1,1}-U^{12,t}_{1,1}-U^{13,t}_{1,1})&-\sqrt2(U^{12,t}_{1,1}+\frac32U^{13,t}_{1,1})&\sqrt2(U^{11,t}_{1,1}-\frac32U^{15,t}_{1,1})\\
			\hline
			i=5&\frac i{\sqrt2}(U^{4t}_{1,1}+U^{7t}_{1,1})&\frac i{\sqrt2}(U^{5t}_{1,1}-U^{12,t}_{1,1}+U^{13,t}_{1,1})&\sqrt2i(\frac32U^{13,t}_{1,1}-U^{12,t}_{1,1})&-\sqrt2i(\frac32U^{15,t}_{1,1}+U^{11,t}_{1,1})\\
			\hline
			i=6&\frac1{\sqrt2}U^{13,t}_{1,1}&\frac1{\sqrt2}(U^{4t}_{1,1}+U^{7t}_{1,1}+U^{14,t}_{1,1})&\frac1{\sqrt2}(U^{4t}_{1,1}-U^{7t}_{1,1}-2U^{14,t}_{1,1})&-\frac1{\sqrt2}(U^{2t}_{1,1}+U^{8t}_{1,1}+2U^{16,t}_{1,1})\\
			\hline
			i=7&-\frac i{\sqrt2}U^{13,t}_{1,1}&\frac i{\sqrt2}(U^{4t}_{1,1}+U^{7t}_{1,1}-U^{14,t}_{1,1})&\frac i{\sqrt2}(U^{4t}_{1,1}-U^{7t}_{1,1}+2U^{14,t}_{1,1})&-\frac i{\sqrt2}(U^{2t}_{1,1}+U^{8t}_{1,1}-2U^{16,t}_{1,1})\\
			\hline
			i=8&2iU^{9t}_{1,1}&0&0&-iU^{12,t}_{1,1}\\
			\hline
			i=9&-U^{10,t}_{1,1}&2(U^{9t}_{1,1}+U^{16,t}_{1,1})&U^{9t}_{1,1}+U^{16,t}_{1,1}&-U^{7t}_{1,1}\\
			\hline
			i=10&iU^{10,t}_{1,1}&2i(U^{9t}_{1,1}-U^{16,t}_{1,1})&i(U^{9t}_{1,1}-U^{16,t}_{1,1})&-iU^{7t}_{1,1}\\
			\hline
		\end{array}
	\end{align*}
	\begin{align*}
		\begin{array}{|c|c|c|c|c|}
			\hline
			X_i(U^{st}_{1,1})&s=13&s=14&s=15&s=16\\
			\hline
			i=1&-2iU^{13,t}_{1,1}&-2iU^{14,t}_{1,1}&iU^{15,t}_{1,1}&-iU^{16,t}_{1,1}\\
			\hline
			i=2&-U^{4t}_{1,1}&-U^{5t}_{1,1}&U^{16,t}_{1,1}&-U^{15,t}_{1,1}\\
			\hline
			i=3&-iU^{4t}_{1,1}&-iU^{5t}_{1,1}&-iU^{16,t}_{1,1}&-iU^{15,t}_{1,1}\\
			\hline
			i=4&\frac1{\sqrt2}U^{11,t}_{1,1}&-\frac1{\sqrt2}U^{16,t}_{1,1}&\frac1{\sqrt2}U^{12,t}_{1,1}&\frac1{\sqrt2}U^{14,t}_{1,1}\\
			\hline
			i=5&\frac i{\sqrt2}U^{11,t}_{1,1}&-\frac i{\sqrt2}U^{16,t}_{1,1}&-\frac i{\sqrt2}U^{12,t}_{1,1}&-\frac i{\sqrt2}U^{14,t}_{1,1}\\
			\hline
			i=6&-\frac1{\sqrt2}U^{9t}_{1,1}&\frac1{\sqrt2}(U^{11,t}_{1,1}-U^{10,t}_{1,1})&-\frac1{\sqrt2}U^{3t}_{1,1}&\frac1{\sqrt2}(U^{12,t}_{1,1}-U^{5t}_{1,1})\\
			\hline
			i=7&-\frac i{\sqrt2}U^{9t}_{1,1}&\frac i{\sqrt2}(U^{11,t}_{1,1}-U^{10,t}_{1,1})&-\frac i{\sqrt2}U^{3t}_{1,1}&\frac i{\sqrt2}(U^{12,t}_{1,1}-U^{5t}_{1,1})\\
			\hline
			i=8&iU^{13,t}_{1,1}&-iU^{14,t}_{1,1}&-2iU^{15,t}_{1,1}&-2iU^{16,t}_{1,1}\\
			\hline
			i=9&-U^{14,t}_{1,1}&U^{13,t}_{1,1}&-U^{8t}_{1,1}&-U^{10,t}_{1,1}\\
			\hline
			i=10&iU^{14,t}_{1,1}&iU^{13,t}_{1,1}&-iU^{8t}_{1,1}&-iU^{10,t}_{1,1}\\
			\hline
		\end{array}
	\end{align*}
	for any $1\le t\le16$, and
	\begin{align*}
		&X_8^2(U^{st}_{1,1})=\left\{
		\begin{array}{ll}
			-U^{st}_{1,1},&s=1,3,4,5,7,12,13,14,\\
			-4U^{st}_{1,1},&s=6,9,15,16,\\
			0,&s=2,8,10,11,
		\end{array}
		\right.\\
		&X_9^2(U^{st}_{1,1})=\left\{
		\begin{array}{ll}
			-U^{st}_{1,1},&s=1,3,4,5,7,12,13,14,\\
			-4U^{st}_{1,1},&s=8,10,\\
			-2U^{8t}_{1,1},&s=2,\\
			2(U^{15,t}_{1,1}-U^{6t}_{1,1}),&s=6,\\
			-2(U^{9t}_{1,1}+U^{16,t}_{1,1}),&s=9,16,\\
			-2U^{10,t}_{1,1},&s=11,\\
			2(U^{6t}_{1,1}-U^{15,t}_{1,1}),&s=15,
		\end{array}
		\right.\\
		&X_{10}^2(U^{st}_{1,1})=\left\{
		\begin{array}{ll}
			-U^{st}_{1,1},&s=1,3,4,5,7,12,13,14,\\
			-4U^{st}_{1,1},&s=8,10,\\
			-2U^{8t}_{1,1},&s=2,\\
			-2(U^{6t}_{1,1}+U^{15,t}_{1,1}),&s=6,15,\\
			2(U^{16,t}_{1,1}-U^{9t}_{1,1}),&s=9,\\
			-2U^{10,t}_{1,1},&s=11,\\
			2(U^{9t}_{1,1}-U^{16,t}_{1,1}),&s=16
		\end{array}
		\right.
	\end{align*}
	for any $1\le t\le16$. Hence $-C_K$ acts as $0$, $-3Id$, and $-8Id$ on the subspaces $V_1$, $V_2$, and $V_3$, respectively, where
	\begin{align*}
		&V_1={\rm span}_{\mathbb{C}}\{U^{8t}_{1,1}-2U^{2t}_{1,1}, U^{10,t}_{1,1}-2U^{11,t}_{1,1}\mid 1\le t\le16\},\\
		&V_2={\rm span}_{\mathbb{C}}\{U^{st}_{1,1}\mid s=1,3,4,5,7,12,13,14,\ 1\le t\le 16\},\\
		&V_3={\rm span}_{\mathbb{C}}\{U^{st}_{1,1}\mid s=6,8,9,10,15,16,\ 1\le t\le 16\}.
	\end{align*}
	As in the preceding cases, matching the eigenvalues of the two $K$-Casimir operators gives
	\begin{align*}
		(\mathcal{U}_{1,1}^{\mathbb{C}}\otimes(\mathfrak{m}^*\otimes\mathfrak{k})_{\mathbb{C}})_K\subset V_2\otimes {\rm span}_{\mathbb{C}}\{F_1,F_2,F_3,F_4\}\oplus V_3\otimes {\rm span}_{\mathbb{C}}\{\xi_i\otimes X_\alpha\mid1\le i\le3,\ 8\le\alpha\le10\}.
	\end{align*}
	The first and second summands correspond to $(\lambda,q)=(-7,1)$ and $(12,2)$, respectively. The branching rule for $V_{1,1}$ therefore gives
	\begin{align*}
		V_2&\subset\mathcal{U}^{\mathbb{C}}_{1,1}(2,1)\oplus\mathcal{U}^{\mathbb{C}}_{1,1}(0,1),\\
		V_3&\subset\mathcal{U}^\mathbb{C}_{1,1}(1,2).
	\end{align*}
	For each fixed $1\le t\le16$, the infinitesimal $K$-equivariance equations on the first summand are
	\begin{align*}
		\hat A_8x=\hat A_9x=\hat A_{10}x=0,\qquad x\in\mathbb{C}^{32},
	\end{align*}
	with respect to the lexicographically ordered basis
	\begin{align*}
		(U^{st}_{1,1}\otimes F_r)_{s\in(1,3,4,5,7,12,13,14),\ 1\le r\le4},
	\end{align*}
	with $r$ varying fastest, where
	\begin{align*}
		&\hat A_8=diag(iI-A_8',-iI-A_8',iI-A_8',-iI-A_8',iI-A_8',-iI-A_8',iI-A_8',-iI-A_8'),\\
		&\hat A_9=diag(\begin{pmatrix}
			-A_9'&-I\\I&-A_9'
		\end{pmatrix},
		\begin{pmatrix}
			-A_9'&I\\-I&-A_9'
		\end{pmatrix},
		\begin{pmatrix}
			-A_9'&-I\\I&-A_9'
		\end{pmatrix},
		\begin{pmatrix}
			-A_9'&I\\-I&-A_9'
		\end{pmatrix}),\\
		&\hat A_{10}=diag(\begin{pmatrix}
			-A_{10}'&-iI\\-iI&-A_{10}'
		\end{pmatrix},
		\begin{pmatrix}
			-A_{10}'&iI\\iI&-A_{10}'
		\end{pmatrix},
		\begin{pmatrix}
			-A_{10}'&-iI\\-iI&-A_{10}'
		\end{pmatrix},
		\begin{pmatrix}
			-A_{10}'&iI\\iI&-A_{10}'.
		\end{pmatrix})
	\end{align*}
	For each fixed $1\le i\le3$ and $1\le t\le16$, the equations on the second summand are
	\begin{align*}
		\hat B_8y=\hat B_9y=\hat B_{10}y=0,\qquad y\in\mathbb{C}^{18},
	\end{align*}
	with respect to the lexicographically ordered basis
	\begin{align*}
		(U^{st}_{1,1}\otimes\xi_i\otimes X_\alpha)_{s\in(6,8,9,10,15,16),\ 8\le\alpha\le10},
	\end{align*}
	with $\alpha$ varying fastest, where
	\begin{align*}
		&\hat B_8=\begin{pmatrix}
			2iI+B_8'&0&0&0&0&0\\
			0&B_8'&0&0&0&0\\
			0&0&2iI+B_8'&0&0&0\\
			0&0&0&B_8'&0&0\\
			0&0&0&0&-2iI+B_8'&0\\
			0&0&0&0&0&-2iI+B_8'
		\end{pmatrix},\\
		&\hat B_9=\begin{pmatrix}
			B_9'&-2I&0&0&0&0\\
			I&B_9'&0&0&-I&0\\
			0&0&B_9'&2I&0&0\\
			0&0&-I&B_9'&0&-I\\
			0&2I&0&0&B_9'&0\\
			0&0&0&2I&0&B_9'
		\end{pmatrix},\\
		&\hat B_{10}=\begin{pmatrix}
			B_{10}'&-2iI&0&0&0&0\\
			-iI&B_{10}'&0&0&-iI&0\\
			0&0&B_{10}'&2iI&0&0\\
			0&0&iI&B_{10}'&0&-iI\\
			0&-2iI&0&0&B_{10}'&0\\
			0&0&0&-2iI&0&B_{10}'
		\end{pmatrix}
	\end{align*}
	Direct calculation gives
	\begin{align*}
		(\mathcal{U}_{1,1}^{\mathbb{C}}\otimes(\mathfrak{m}^*\otimes\mathfrak{k})_{\mathbb{C}})_K={\rm span}_{\mathbb{C}}\{\hat\Phi_{jt},\hat\Phi_{9it},\hat\Phi_{10,it}\mid1\le j\le8,\ 1\le i\le3,\ 1\le t\le16\},
	\end{align*}
	where
	\begin{align*}
		&\hat\Phi_{1t}=U^{1t}_{1,1}\otimes F_1+iU^{1t}_{1,1}\otimes F_4-iU^{3t}_{1,1}\otimes F_2-U^{3t}_{1,1}\otimes F_3,\\
		&\hat\Phi_{2t}=U^{1t}_{1,1}\otimes F_2+iU^{1t}_{1,1}\otimes F_3+iU^{3t}_{1,1}\otimes F_1+U^{3t}_{1,1}\otimes F_4,\\
		&\hat\Phi_{3t}=U^{4t}_{1,1}\otimes F_1+iU^{4t}_{1,1}\otimes F_4+iU^{5t}_{1,1}\otimes F_2+U^{5t}_{1,1}\otimes F_3,\\
		&\hat\Phi_{4t}=U^{4t}_{1,1}\otimes F_2+iU^{4t}_{1,1}\otimes F_3-iU^{5t}_{1,1}\otimes F_1-U^{5t}_{1,1}\otimes F_4,\\
		&\hat\Phi_{5t}=U^{7t}_{1,1}\otimes F_1+iU^{7t}_{1,1}\otimes F_4-iU^{12,t}_{1,1}\otimes F_2-U^{12,t}_{1,1}\otimes F_3,\\
		&\hat\Phi_{6t}=U^{7t}_{1,1}\otimes F_2+iU^{7t}_{1,1}\otimes F_3+iU^{12,t}_{1,1}\otimes F_1+U^{12,t}_{1,1}\otimes F_4,\\
		&\hat\Phi_{7t}=U^{13,t}_{1,1}\otimes F_1+iU^{13,t}_{1,1}\otimes F_4+iU^{14,t}_{1,1}\otimes F_2+U^{14,t}_{1,1}\otimes F_3,\\
		&\hat\Phi_{8t}=U^{13,t}_{1,1}\otimes F_2+iU^{13,t}_{1,1}\otimes F_3-iU^{14,t}_{1,1}\otimes F_1-U^{14,t}_{1,1}\otimes F_4,\\
		&\hat\Phi_{9it}=U^{6t}_{1,1}\otimes\xi_i\otimes X_9+iU^{6t}_{1,1}\otimes\xi_i\otimes X_{10}+iU^{8t}_{1,1}\otimes\xi_i\otimes X_8+U^{15,t}_{1,1}\otimes\xi_i\otimes X_9-iU^{15,t}_{1,1}\otimes\xi_i\otimes X_{10},\\
		&\hat\Phi_{10,it}=U^{9t}_{1,1}\otimes\xi_i\otimes X_9+iU^{9t}_{1,1}\otimes\xi_i\otimes X_{10}-iU^{10,t}_{1,1}\otimes\xi_i\otimes X_8-U^{16,t}_{1,1}\otimes\xi_i\otimes X_9+iU^{16,t}_{1,1}\otimes\xi_i\otimes X_{10}.
	\end{align*}
	Lemma \ref{eigenvalue of J2} and Lemma \ref{eigenvalue of J0} give
	\begin{align*}
		\tilde J_2^{\mathbb{C}}\ge\lambda_{1,1}(2,1)Id=16Id,\quad\tilde J_0^{\mathbb{C}}=-7Id
	\end{align*}
	on ${\rm span}_{\mathbb{C}}\{\hat\Phi_{jt}\mid1\le j\le8,\ 1\le t\le16\}$, and
	\begin{align*}
		&\tilde J_2^{\mathbb{C}}=\lambda_{1,1}(1,2)Id=11Id,\quad\tilde J_0^{\mathbb{C}}=12Id
	\end{align*}
	on ${\rm span}_{\mathbb{C}}\{\hat\Phi_{9it},\hat\Phi_{10,it}\mid1\le i\le3,\ 1\le t\le16\}$. Consequently,
	\begin{align*}
		\tilde J^{\mathbb{C}}\ge\tilde J_1^{\mathbb{C}}+23P_1+9P_2.
	\end{align*}
	For each fixed $1\le t\le16$, with respect to the ordered basis $(\hat\Phi_{1t},\dots,\hat\Phi_{8t},\hat\Phi_{91t},\hat\Phi_{92t},\hat\Phi_{93t},\hat\Phi_{10,1t},\hat\Phi_{10,2t},\hat\Phi_{10,3t})$, the operator $\tilde J_1^{\mathbb{C}}$ is represented by
	\begin{align*}
		\tilde J_1^{\mathbb{C}}=\begin{pmatrix}
			0_{8\times8}&D_1\\
			D_2&D_3
		\end{pmatrix},
	\end{align*}
	where
	\begin{align*}
		&D_1=\begin{pmatrix}
			0&-\sqrt2 i&\sqrt2&0&0&0\\
			\sqrt2 i&0&0&0&0&0\\
			0&0&0&0&-\sqrt2 i&\sqrt2\\
			0&0&0&\sqrt2 i&0&0\\
			\sqrt2&0&0&0&-\sqrt2 i&\sqrt2\\
			0&\sqrt2&-\sqrt2 i&\sqrt2 i&0&0\\
			0&0&0&\sqrt2&0&0\\
			0&0&0&0&\sqrt2&-\sqrt2 i
		\end{pmatrix},\\
		&D_2=\begin{pmatrix}
			0&-4\sqrt2i&-4\sqrt2&0&12\sqrt2&0&0&0\\
			4\sqrt2i&0&0&-4\sqrt2&0&12\sqrt2&0&0\\
			4\sqrt2&0&0&-4\sqrt2i&0&12\sqrt2i&0&0\\
			0&0&0&-4\sqrt2i&0&-8\sqrt2i&4\sqrt2&0\\
			0&0&4\sqrt2i&0&8\sqrt2i&0&0&4\sqrt2\\
			0&0&4\sqrt2&0&8\sqrt2&0&0&4\sqrt2i
		\end{pmatrix},\\
		&D_3=\begin{pmatrix}
			0&0&0&0&2i&-2\\
			0&0&2i&-2i&0&0\\
			0&-2i&0&2&0&0\\
			0&2i&2&0&0&0\\
			-2i&0&0&0&0&-2i\\
			-2&0&0&0&2i&0
		\end{pmatrix}.
	\end{align*}
	Thus, in this ordered basis, the operator $\tilde J_1^{\mathbb{C}}+23P_1+9P_2$ is represented by
	\begin{align*}
		\tilde J_1^{\mathbb{C}}+diag(9I_8,23I_6).
	\end{align*}
	Its characteristic polynomial is
	\begin{align*}
		det(\lambda I-(\tilde J_1^{\mathbb{C}}+diag(9I,23I)))=(\lambda-9)^2(\lambda-17-4\sqrt5)^4(\lambda-17+4\sqrt5)^4(\lambda-14-\sqrt{89})^2(\lambda-14+\sqrt{89})^2.
	\end{align*}
	Therefore, we obtain
	\begin{align*}
		\tilde J^{\mathbb{C}}\ge(14-\sqrt{89})Id>0
	\end{align*}
	on $(\mathcal{U}_{1,1}^{\mathbb{C}}\otimes(\mathfrak{m}^*\otimes\mathfrak{k})_{\mathbb{C}})_K$, and thus
	\begin{align}\label{tilde J on V_{1,1}}
		\tilde J^{\mathbb{C}}>0
	\end{align}
	on $(\mathcal{U}_{1,1}^{\mathbb{C}}\otimes(\mathfrak{m}^*\otimes\mathfrak{k})_{\mathbb{C}})_K\cap {\rm ker\,}\tilde\delta$.
	
	\emph{Proof of Theorem \ref{thm1}.} Combining Proposition \ref{eigenvalue of J^C} with \eqref{tilde J on V_{1,0}}, \eqref{tilde J on V_{0,1}}, \eqref{tilde J on V_{2,0}}, and \eqref{tilde J on V_{1,1}}, we immediately obtain the conclusion of Theorem \ref{thm1}.
	
	\appendix
	\section{Use of AI}
	
	The authors independently carried out the derivations and computations in Sections \ref{section preliminaries}--\ref{section V_{2,0}}, including the construction of the bundle isomorphism, the calculation of the Levi--Civita connection on $G/K$, the derivation of the Jacobi operator on the $K$-equivariant space, the estimation of its eigenvalues, and the computation of its eigenvalues on the $K$-equivariant spaces corresponding to $V_{1,0}$, $V_{0,1}$, and $V_{2,0}$. 
	
	In Section \ref{section V_{1,1}}, GPT-5.6-sol was used to compute the $K$-equivariant spaces and the matrices representing the operator $\tilde J_1^{\mathbb{C}}$ on these spaces.
	
	All AI-assisted computations were independently verified by the authors, who take full responsibility for the results presented in this article.
	
	\section*{Conflict of interest}
	
	The authors declare no conflict of interest.
	
	\section*{Acknowledgment}

	Both authors are supported by the National Key R\&D Program of China (2022YFA1005400).

\end{document}